\documentclass[a4paper, 10pt, twoside, notitlepage]{amsart}

\usepackage{amsmath,amscd}
\usepackage{amssymb}
\usepackage{amsthm}
\usepackage{comment}
\usepackage{graphicx, xcolor}

\usepackage{mathrsfs}
\usepackage[ocgcolorlinks, linkcolor=blue]{hyperref}

\usepackage[ocgcolorlinks,linkcolor=blue]{hyperref}

\usepackage{bm}
\usepackage{bbm}
\usepackage{url}

\newcommand{\LC}{\left(}
\newcommand{\RC}{\right)}

\theoremstyle{plain}
\newtheorem{thm}{Theorem}[section]
\newtheorem{prop}{Proposition}[section]
\newtheorem{lem}[prop]{Lemma}
\newtheorem{cor}[prop]{Corollary}

\newtheorem{defi}[prop]{Definition}
\newtheorem{rmk}[prop]{Remark}

\numberwithin{equation}{section}
\newcommand {\R} {\mathbb{R}} 
 \newcommand {\N} {\mathbb{N}}
 
\newcommand {\p} {\partial}

\newcommand{\eps}{\epsilon}

\newcommand{\wt}{\widetilde}

\newcommand{\norm}[1]{\lVert #1 \rVert}         

\pagestyle{headings}

\title[Calder\'on problem for the fractional wave equation]{The Calder\'on problem for the fractional wave equation: Uniqueness and optimal stability}

\author[P.-Z. Kow]{Pu-Zhao Kow}
\address{Department of Mathematics, National Taiwan University, Taipei 106, Taiwan}
\email{d07221005@ntu.edu.tw}

\author[Y.-H. Lin]{Yi-Hsuan Lin}
\address{Department of Applied Mathematics, National Yang Ming Chiao Tung University, Hsinchu 30050, Taiwan}
\email{yihsuanlin3@gmail.com}

\author[J.-N. Wang]{Jenn-Nan Wang}
\address{Institute of Applied Mathematical Sciences, National Taiwan University, Taipei 106, Taiwan}
\email{jnwang@math.ntu.edu.tw}

\begin{document}
	
	\maketitle
	
	\begin{abstract}
	We study an inverse problem for the fractional wave equation with a potential by the measurement taking on  arbitrary subsets of the exterior in the space-time domain. We are interested in the issues of uniqueness and stability estimate in the determination of the potential by the exterior Dirichlet-to-Neumann map. The main tools are the qualitative and quantitative unique continuation properties for the fractional Laplacian. For the stability, we also prove that the log type stability estimate is optimal. The log type estimate shows the striking difference between the inverse problems for the fractional and classical wave equations in the stability issue. The results hold for any spatial dimension $n\in \N$.
		
		\medskip
		
		\noindent{\bf Keywords.} Calder\'on problem, peridynamic, fractional Laplacian, nonlocal, fractional wave equation, strong uniqueness, Runge approximation, logarithmic stability.
		
		\noindent{\bf Mathematics Subject Classification (2020)}: 35B35, 35R11, 35R30
		
	\end{abstract}

	\tableofcontents

	\section{Introduction}\label{Sec 1}
	In this paper, we study an inverse problem for the fractional wave equation with a potential. The mathematical model for the fractional wave equation is formulated as follows.
	Let $\Omega\subset \R^n$ be a bounded Lipschitz domain, for $n\in \N$. Given $T>0$,  $s\in (0,1)$ and $q=q(x)\in L^\infty(\Omega)$, consider the initial exterior value problem for the wave equation with the fractional Laplacian,
	\begin{align}\label{fractional wave main}
		\begin{cases}
			\LC \p_t^2 + (-\Delta)^s  + q \RC u =0 & \text{ in }\Omega_T :=\Omega \times (0,T),\\
			u=f  & \text{ in }(\Omega_e)_T:= \Omega_e \times (0,T),\\
			u=\p _t u=0 & \text{ in }\R^n \times \{0\},
		\end{cases}
	\end{align}
	where $(-\Delta)^s$ is the standard fractional Laplacian\footnote{A rigorous definition is given in Section \ref{sec:forward}.}, and 
	$$\Omega_e := \R^n\setminus \overline{\Omega}$$ 
	denotes the exterior domain. The fractional wave equation can be regarded as a special case of the peridynamics which models the nonlocal elasticity theory, see e.g. \cite{silling2016introduction}. 
	
	In recent years, inverse problems involving the fractional Laplacian have received a lot of attention. Ghosh-Salo-Uhlmann \cite{ghosh2016calder} first proposed the Calder\'on problem for the fractional Schr\"odinger equation, and the proof relies on the \emph{strong uniqueness} of the fractional Laplacian (\cite[Theorem 1.2]{ghosh2016calder}) and the \emph{Runge approximation} (\cite[Theorem 1.3]{ghosh2016calder}). Based on these two useful tools, there are many related works appeared in past few years, such as \cite{BGU18,CLL2017simultaneously,CL2019determining,cekic2020calderon,ghosh2017calder,GRSU18,harrach2017nonlocal-monotonicity,harrach2020monotonicity,LL2020inverse,lai2019global,RS20Calderon,LLR2019calder,lin2020monotonicity} and the references therein.

	Throughout this work, we assume that the (lateral) exterior data $f$ is compactly supported in the set $W_T := W \times (0,T)\subset (\Omega_e)_T$, 
	 where $W\subset \Omega_e$ with $\overline{W}\cap \overline{\Omega}=\emptyset$ can be any nonempty open subset with Lipschitz boundary, and, to simplify our notations, we assume that both $q$ and $f$ are real-valued functions.
	Note that the initial boundary value problem \eqref{fractional wave main} is a mixed \emph{local-nonlocal} type equation.
	In order to study the inverse problem of \eqref{fractional wave main}, we will use the strong approximation property of \eqref{fractional wave main}, which is due to  the \emph{nonlocality} of the fractional Laplacian $(-\Delta)^s$, for $0<s<1$.
	Hence, by the well-posedness of \eqref{fractional wave main} (see Theorem \ref{thm:well-posedness}), one can formally define the associated \emph{Dirichlet-to-Neumann} (DN) map $\Lambda_q$   
    \begin{equation}
   \Lambda_q : C_{c}^{\infty}((\Omega_e)_{T})\to L^2(0,T; H^{-s}(\Omega_e)), \quad  \Lambda_q : f \mapsto \left.(-\Delta)^s u\right|_{(\Omega_e)_T},\label{eq:hyperbolic-DN}
    \end{equation}
	where $u$ is the unique solution to \eqref{fractional wave main}. The precise definitions of the Sobolev spaces will be given in Section~\ref{subsec:Sobolev}.
	Let us state the first main result of our work.
	
	\begin{thm}[Global uniqueness]\label{thm:global uniqueness}
		Consider $T>0$, $s\in (0,1)$, and $q_j=q_j(x)\in L^\infty(\Omega)$, for $j=1,2$. 
		Assume that $W_1,W_2\subset \Omega_e$ are arbitrary open sets with Lipschitz boundary such that $\overline{W_1}\cap \overline{\Omega}=\overline{W_2}\cap \overline{\Omega}=\emptyset$.
		Let $\Lambda_{q_j}$ be the DN map of 
		\begin{align}\label{fractional wave in uniqueness}
			\begin{cases}
				\LC \p_t^2 + (-\Delta)^s +q_j \RC u =0 & \text{ in }\Omega_T,\\
				u=f  & \text{ in }(\Omega_e)_T,\\
				u(x,0)=\p _t u(x,0)=0 & \text{ in }\R^n \times \{0\},
			\end{cases}
		\end{align}
	for $j=1,2$. If 
    \begin{align}
    		\left. \Lambda_{q_1}(f) \right|_{(W_2)_T}=\left. \Lambda_{q_2}(f) \right|_{(W_2)_T},  \text{ for any }f\in C_{c}^\infty((W_1)_T), \label{eq:assump}
    \end{align} 
    then $q_1=q_2$ in $\Omega_T$\footnote{Throughout this paper, we adapt the notation $A_T:=A\times (0,T)$, for any set $A\subset \R^n$.}.
	\end{thm}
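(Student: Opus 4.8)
The plan is to follow the by-now standard strategy of Ghosh--Salo--Uhlmann for fractional Calder\'on problems, adapted to the time-dependent setting by an Alessandrini-type integral identity combined with Runge approximation. First I would derive the integral identity. Let $f_1,f_2\in C_c^\infty((W_1)_T)$, and let $u_j$ solve \eqref{fractional wave in uniqueness} with potential $q_j$ and data $f_j$. Let $v$ solve the \emph{backward} adjoint problem
\begin{align*}
\begin{cases}
\LC \p_t^2 + (-\Delta)^s + q_1 \RC v = 0 & \text{ in }\Omega_T,\\
v = f_2 & \text{ in }(\Omega_e)_T,\\
v(x,T)=\p_t v(x,T)=0 & \text{ in }\R^n\times\{0\},
\end{cases}
\end{align*}
so that $v$ is a legitimate ``test solution'' for $q_1$. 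Pairing the equations for $u_1$ (with data $f_1$, potential $q_1$) and $u_2$ (with data $f_2$, potential $q_2$) against the opposite solutions, integrating in $x$ and $t$ over $\Omega_T$, and integrating by parts twice in $t$ (the initial conditions on $u_j$ and the terminal conditions on the adjoint solution kill all boundary-in-time terms), together with the self-adjointness of $(-\Delta)^s$ and the bilinear-form definition of the DN map, yields
\begin{align*}
\LA (\Lambda_{q_1}-\Lambda_{q_2})f_1, f_2 \RA = \int_0^T\!\!\int_\Omega (q_2-q_1)\, u_2\, w \, dx\, dt,
\end{align*}
where $w$ solves the $q_1$-problem with exterior data $f_2$. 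The hypothesis \eqref{eq:assump} forces the left-hand side to vanish for all $f_1\in C_c^\infty((W_1)_T)$, $f_2\in C_c^\infty((W_2)_T)$, hence $\int_0^T\!\int_\Omega (q_2-q_1)\,u_2\,w\,dx\,dt = 0$ for all such pairs.

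The second step is Runge approximation. The key input is the \emph{nonlocality} of $(-\Delta)^s$: since $W_1,W_2$ have positive distance from $\Omega$, one can use the strong approximation property for the mixed local--nonlocal operator $\p_t^2+(-\Delta)^s+q_j$ (which the paper asserts will be established, and which itself rests on the strong uniqueness / UCP for the fractional Laplacian à la \cite{ghosh2016calder}) to show that the sets
\begin{align*}
\mathcal R_j := \LCB \, u_j|_{\Omega_T} \;:\; u_j \text{ solves the } q_j\text{-problem with } f_j\in C_c^\infty((W_j)_T) \,\RCB
\end{align*}
are dense in a suitable space over $\Omega_T$ (e.g.\ in $L^2(\Omega_T)$, or in the natural energy space). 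Choosing sequences $u_2^{(k)}\to \phi$ and $w^{(m)}\to \psi$ for arbitrary $\phi,\psi\in C_c^\infty(\Omega_T)$ and passing to the limit in the integral identity gives $\int_0^T\!\int_\Omega (q_2-q_1)\,\phi\,\psi\,dx\,dt=0$ for all $\phi,\psi$, and since products $\phi\psi$ are dense this yields $q_1=q_2$ in $\Omega_T$.

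The main obstacle is the Runge approximation step in this time-dependent, mixed local--nonlocal setting: one must carefully set up the functional-analytic duality (identifying the annihilator of $\mathcal R_j$ with solutions of an adjoint problem that vanish by UCP), control the regularity in time so that the parabolic-in-$t$/nonlocal-in-$x$ structure does not obstruct the density argument, and verify that the exterior data can indeed be taken in $C_c^\infty((W_j)_T)$ with $W_j$ an \emph{arbitrary} open set at positive distance from $\Omega$ — this is exactly where the fractional UCP (as opposed to a local one) is indispensable. A secondary technical point is justifying the integral identity and the integration by parts at the claimed regularity of solutions guaranteed by Theorem \ref{thm:well-posedness}; I would handle this by a density/approximation argument reducing to smooth data. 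Once these are in place, the conclusion $q_1=q_2$ in $\Omega_T$ (equivalently $q_1=q_2$ in $\Omega$, as the $q_j$ are time-independent) is immediate.
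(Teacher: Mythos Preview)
Your proposal is correct and follows essentially the same route as the paper: an Alessandrini-type integral identity (the paper's Lemma~\ref{lem:self-adjoint}, which pairs $u_1=\mathcal{P}_{q_1}f_1$ with the time-reversed solution $u_2^*(x,t):=u_2(x,T-t)$, $u_2=\mathcal{P}_{q_2}f_2$, so that the $t$-boundary terms vanish) combined with the qualitative Runge approximation in $L^2(\Omega_T)$ (Theorem~\ref{thm:Runge}), proved via Hahn--Banach and the strong UCP for $(-\Delta)^s$. The only cosmetic differences are that your exposition of the identity has slightly muddled indices (you introduce $v$, then switch to $w$), and that the paper approximates an arbitrary $g\in L^2(\Omega_T)$ by $u_{1,k}$ and the constant $1$ by $u_{2,k}^*$, rather than two independent test functions $\phi,\psi$.
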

	
	The proof of Theorem \ref{thm:global uniqueness} is based on the qualitative form of  the \emph{Runge approximation} for the fractional wave equation: For any $g\in L^2(\Omega_T)$, there exists a sequence of functions $\{f_k\}_{k\in \N}\in C_{c}^\infty((W_1)_T)$ such that $u_{k}\to g$ in $L^2 (\Omega_T)$ as $k\to \infty$, where $u_k$ is the solution to \eqref{fractional wave main} with $u_k=f_k$ in $(\Omega_e)_T$, for all $k\in \N$. The preceding characterization can be regarded as an \emph{exterior control} approach, in the sense that one can always control the solution by choosing appropriate exterior data.
	
	The second main result of the paper is a quantitative version of Theorem~\ref{thm:global uniqueness}, which provides a stability estimate for our fractional Calder\'on problem.
	Before we state the stability result, we introduce some notations. 
	\begin{defi}
		Let $H^{2}(0,T;\wt H^{\alpha}(\Omega))$ be the Sobolev space
		equipped with the norm 
		\[
		\|u\|_{H^{2}(0,T;\wt H^{\alpha}(\Omega))}=\|u\|_{L^{2}(0,T; \wt H^{\alpha}(\Omega))}+\|\partial_{t} u\|_{L^{2}(0,T;\wt H^{\alpha}(\Omega))}+\|\partial_{t}^{2}u\|_{L^{2}(0,T;\wt H^{\alpha}(\Omega))}.
		\]
		We also denote 
		\[
		H_{0}^{2}(0,T;\wt H^{\alpha}(\Omega)):=\left\{ u\in H^{2}(0,T;\wt{H}^{\alpha}(\Omega)):\  u=\partial_{t}u=0\text{ in }\mathbb{R}^{n}\times\{0\}\right\},
		\]
		and let $H^{-2}(0,T;H^{-\alpha}(\Omega))$ be the dual space of
		$H_{0}^{2}(0,T;\wt{H}^{\alpha}(\Omega))$. We shall explain the space $\wt{H}^{\alpha}(\Omega)$ in more detail later in Section~{\rm \ref{sec:forward}}. 
	\end{defi}
	
	\begin{defi}\label{defi: space Z}
		For each $\alpha>0$ and $T>0$, we define 
		\[
	    	\|q\|_{Z^{-\alpha}(\Omega;T)}
	    	  :=\sup\begin{Bmatrix}\left|\int_{\Omega_{T}}q(x,t)\phi_{1}(x,t)\phi_{2}(x,t)\,dx\,dt\right|\end{Bmatrix},
		\]
		where the supremum is taken over all functions $\phi_{1},\phi_{2}\in C_{c}^{\infty}(\Omega_{T})$ with
		\[
		\|\phi_{j}\|_{H^{2}(0,T;\wt{H}^{\alpha}(\Omega))}=1 \quad (j=1,2),
		\]
		and let $Z^{-\alpha}(\Omega;T)$ be the Banach space equipped with this
		norm. 
	\end{defi}
	
	\begin{rmk}
		Since $\|\phi_{j}\|_{L^{2}(\Omega_{T})}\le\|\phi_{j}\|_{H^{2}(0,T;\wt{H}^{\alpha}(\Omega))}=1$
		for all $\phi \in H_0^{2}(0,T;\wt{H}^{\alpha}(\Omega))$, $\alpha>0$ and $T>0$, it is easy to see that 
		\[
		\|q\|_{Z^{-\alpha}(\Omega;T)}\le\|q\|_{L^{\infty}(\Omega_{T})}
		\]
		for all $q=q(x,t)$, which implies $L^{\infty}(\Omega_{T})\subset Z^{-\alpha}(\Omega;T)$. 
	\end{rmk}
	
	To shorten our notations, we denote the operator norm as 
	\[
	\norm{\cdot}_{*}=\norm{\cdot}_{L^{2}(0,T;H_{\overline{W}}^{2s})\rightarrow L^{2}(0,T;H^{-2s}(W))},
	\]
	where the Sobolev space $H_{\overline{W}}^{2s}$ will be described in Section~\ref{subsec:Sobolev}. We are now ready to state the second main result of our work. 
	\begin{thm}
		[Logarithmic stability] \label{thm:main-stability}Let $T>0$,
		$s\in(0,1)$, and $q_{j}=q_{j}(x)\in L^{\infty}(\Omega)$, for $j=1,2$.
		Assume that $W_{1},W_{2}\subset\Omega_{e}$ be arbitrary open sets
		with Lipschitz boundary such that $\overline{W_{1}}\cap\overline{\Omega}=\overline{W_{2}}\cap\overline{\Omega}=\emptyset$.
		Let $\Lambda_{q_{j}}$ be the DN map of \eqref{fractional wave in uniqueness}
		for $j=1,2$. We also fix a regularizing parameter $\gamma>0$. If
		$q_{1}$ and $q_{2}$ both satisfy the apriori bound 
		\[
		\|q_{j}\|_{L^{\infty}(\Omega)}\le M\quad\text{ for }j=1,2,
		\]
		then 
		\[
		\|q_{1}-q_{2}\|_{Z^{-s-\gamma}(\Omega;T)}\le\omega\LC\|\Lambda_{q_{1}}-\Lambda_{q_{2}}\|_{*}\RC
		\]
		where $\omega$  satisfies 
		\[
		\omega(t) \leq C|\log t|^{-\sigma}, \quad 0\leq t\leq 1,
		\]
		for some constants $C$ and $\sigma$ depending only on $n,s,\Omega,W_{1},W_{2},\gamma,T,M$. 
	\end{thm}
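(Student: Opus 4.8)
\emph{Proof proposal.} The strategy is the quantitative counterpart of the argument behind Theorem~\ref{thm:global uniqueness}, following the pattern of the elliptic fractional Calder\'on problem but keeping track of the second-order time derivative and the finite time horizon. I would combine three ingredients: an Alessandrini-type integral identity, the boundedness of the DN map in the norm $\norm{\cdot}_{*}$ coming from the well-posedness in Theorem~\ref{thm:well-posedness}, and a \emph{quantitative} Runge approximation with exponentially growing cost. For the identity, given $f_{1}\in C_{c}^{\infty}((W_{1})_{T})$ and $f_{2}\in C_{c}^{\infty}((W_{2})_{T})$, let $u_{1}$ solve \eqref{fractional wave in uniqueness} with potential $q_{1}$ and exterior data $f_{1}$, and let $v_{2}$ solve the time-reversed (adjoint) problem with potential $q_{2}$ and exterior data $f_{2}$, so that $v_{2}=\partial_{t}v_{2}=0$ on $\R^{n}\times\{T\}$; pairing the two weak formulations and using the self-adjointness of $(-\Delta)^{s}$ yields
\[
\LA\LC\Lambda_{q_{1}}-\Lambda_{q_{2}}\RC f_{1},f_{2}\RA=\int_{\Omega_{T}}\LC q_{1}-q_{2}\RC u_{1}v_{2}\,dx\,dt .
\]
By the energy estimates from Theorem~\ref{thm:well-posedness}, the left side is bounded by $\norm{\Lambda_{q_{1}}-\Lambda_{q_{2}}}_{*}\norm{f_{1}}_{L^{2}(0,T;H_{\overline{W_{1}}}^{2s})}\norm{f_{2}}_{L^{2}(0,T;H_{\overline{W_{2}}}^{2s})}$, while $\norm{u_{1}}_{L^{2}(\Omega_{T})}$ and $\norm{v_{2}}_{L^{2}(\Omega_{T})}$ are controlled by the same data norms, with constants depending only on $n,s,\Omega,T,M$.

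Next I would establish the \emph{quantitative Runge approximation}: fixing $\phi_{1},\phi_{2}\in C_{c}^{\infty}(\Omega_{T})$ with $\norm{\phi_{j}}_{H^{2}(0,T;\wt{H}^{s+\gamma}(\Omega))}=1$, I claim that for every small $\epsilon>0$ there exist $f_{1}\in C_{c}^{\infty}((W_{1})_{T})$ and $f_{2}\in C_{c}^{\infty}((W_{2})_{T})$ whose solutions satisfy $\norm{u_{1}-\phi_{1}}_{L^{2}(\Omega_{T})}\le\epsilon$ and $\norm{v_{2}-\phi_{2}}_{L^{2}(\Omega_{T})}\le\epsilon$ at cost $\norm{f_{j}}_{L^{2}(0,T;H_{\overline{W_{j}}}^{2s})}\le C\exp(C\epsilon^{-\mu})$ for some $\mu=\mu(n,s,\gamma)>0$. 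As in the qualitative case, the orthogonal complement of the set of solution restrictions is described through the adjoint problem: a function $\psi\in L^{2}(\Omega_{T})$ annihilating all such restrictions produces an adjoint solution $w$ with source $\psi$ having $w=0$ and $(-\Delta)^{s}w=0$ in $(W_{j})_{T}$, hence $w\equiv0$ by the strong uniqueness of $(-\Delta)^{s}$ (applied for a.e.\ $t$) and $\psi=0$. Promoting this to the quantitative estimate uses the known \emph{quantitative} unique continuation property for $(-\Delta)^{s}$---via the Caffarelli--Silvestre extension and propagation of smallness, which is the source of the logarithmic modulus---together with an abstract duality lemma converting the approximation defect into the smallness of $(-\Delta)^{s}w$ in $(W_{j})_{T}$. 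The $H^{2}$-in-time norm in Definition~\ref{defi: space Z} matches the second-order time derivative carried by the adjoint equation, and the spatial gap $\gamma$ provides the room needed in the interpolation inequality behind the quantitative UCP.

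Finally I would combine and optimize. Writing $\delta:=\norm{\Lambda_{q_{1}}-\Lambda_{q_{2}}}_{*}$ and splitting
\[
\abs{\int_{\Omega_{T}}\LC q_{1}-q_{2}\RC\phi_{1}\phi_{2}\,dx\,dt}\le\abs{\int_{\Omega_{T}}\LC q_{1}-q_{2}\RC u_{1}v_{2}\,dx\,dt}+\abs{\int_{\Omega_{T}}\LC q_{1}-q_{2}\RC\LC u_{1}v_{2}-\phi_{1}\phi_{2}\RC\,dx\,dt},
\]
the first term is $\le C\delta e^{C\epsilon^{-\mu}}$ by the identity and the cost bound, and the second is $\le 2M\LC\norm{u_{1}-\phi_{1}}_{L^{2}(\Omega_{T})}\norm{v_{2}}_{L^{2}(\Omega_{T})}+\norm{\phi_{1}}_{L^{2}(\Omega_{T})}\norm{v_{2}-\phi_{2}}_{L^{2}(\Omega_{T})}\RC\le C\epsilon$ since $\norm{q_{j}}_{L^{\infty}(\Omega)}\le M$. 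Choosing $\epsilon\approx\abs{\log\delta}^{-1/\mu}$ balances the two contributions, so $\abs{\int_{\Omega_{T}}(q_{1}-q_{2})\phi_{1}\phi_{2}\,dx\,dt}\le C\abs{\log\delta}^{-1/\mu}$ uniformly in the admissible $\phi_{1},\phi_{2}$; taking the supremum gives $\norm{q_{1}-q_{2}}_{Z^{-s-\gamma}(\Omega;T)}\le C\abs{\log\delta}^{-\sigma}$ with $\sigma=1/\mu$, which is the claimed estimate with $\omega(t)=C\abs{\log t}^{-\sigma}$.

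The genuinely new work, and the step I expect to be the main obstacle, is the quantitative Runge approximation of the previous paragraph: one must transport the essentially spatial quantitative UCP for $(-\Delta)^{s}$ to the space-time setting without degrading the logarithmic rate, control the second-order-in-time part and the initial/terminal data through energy estimates for both the forward and the adjoint problems uniformly in $T$ and over $\norm{q_{j}}_{L^{\infty}(\Omega)}\le M$, and keep the Sobolev exponents aligned so that the final pairing lands exactly in $Z^{-s-\gamma}(\Omega;T)$. Once this cost estimate is secured, the identity and the optimization are routine.
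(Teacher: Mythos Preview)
Your proposal is correct and follows essentially the same route as the paper: the integral identity is the paper's Lemma~\ref{lem:self-adjoint}, the quantitative Runge approximation with exponential cost is exactly Theorem~\ref{thm:Quantative-Runge} (proved via the Caffarelli--Silvestre extension, propagation of smallness, and an SVD of the Poisson operator $\mathcal{P}_{q}-{\rm Id}$), and the final optimization with $\epsilon\approx|\log\delta|^{-1/\mu}$ is carried out in Section~\ref{sec:stability} just as you outline. Your identification of the main obstacle---transporting the spatial quantitative UCP to the space-time setting while controlling the second-order time derivative through the $H^{2}$-in-time norm and using the gap $\gamma$ for interpolation---matches precisely the content of the paper's equations \eqref{eq:dual-space-rel1}--\eqref{eq:norm2}.
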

	
	Inspired by Theorem~\ref{thm:global uniqueness}, we will prove Theorem~\ref{thm:main-stability}
	by using a quantitative version of Runge approximation, which involves the well-known \emph{Caffarelli-Silvestre extension} for the fractional Laplacian and the \emph{propagation of smallness}. Moreover, Theorem \ref{thm:global uniqueness} and Theorem \ref{thm:main-stability} are satisfied for any spatial dimension $n\in \N$.
	
	The third main result of this work studies the \emph{exponential instability} of the Calder\'{o}n
	problem for the fractional wave equation. In other words, the stability
	result in Theorem~\ref{thm:main-stability} is optimal.
	For brevity, we denote the operator norm 
	\[
	\|\mathcal{A}\|_{*}'=\sup_{0\not\equiv\chi\in C_{c}^{\infty}((0,T))}\frac{\sup_{t\in(0,T)}\left\|\chi\mathcal{A}\chi\right\|_{L^{2}(B_{3}\setminus\overline{B_{2}})\rightarrow L^{2}(B_{3}\setminus\overline{B_{2}})}(t)}{\|\chi\|^2_{W^{2,\infty}(0,T)}},
	\]
	where $B_r$ with $r>0$ stands for the ball of radius $r$ centered at the origin. 
	\begin{thm}[Exponential instability I]\label{thm:main-instability} 
		Let $\Omega=B_{1}\subset \R^n$, for $n\geq 2$, $n\in \N$.
		Given any $T>0$, $s\in(0,1)$, $\alpha>0$ and $R>0$. There exists
		a positive constant $c_{R,T,n,s}$ such that: Given any $0<\epsilon<c_{R,T,n,s}$,
		there exist potentials $q_{1},q_{2}\in C^{\alpha}(\Omega)$ such
		that $\|q_{j}\|_{L^{\infty}(\Omega)}\le R$, $j=1,2$, satisfying
		\begin{equation}
			\|q_{1}-q_{2}\|_{L^{\infty}(\Omega)}\ge\epsilon,\label{eq:opt-discrete}
		\end{equation}
		but 
		\begin{equation}\label{eq:opt-DN-map-small}
			\left\|\Lambda_{q_{1}}-\Lambda_{q_{2}}\right\|_{\ast}'\le K_{R,T,n,s}\exp\LC-\epsilon^{-\frac{n}{(2n+1)\alpha}}\RC
		\end{equation}
		for some positive constant $K_{R,T,n,s}$. 
	\end{thm}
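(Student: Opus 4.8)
The plan is to run Mandache's metric‑entropy argument for the optimality of logarithmic stability, adapted to the hyperbolic problem: one plays a metric‑entropy lower bound for a family of potentials against a metric‑entropy upper bound for their DN maps, and concludes by a pigeonhole. For the lower bound I would use the standard bump‑function construction. Fix a ball $B\Subset\Omega=B_{1}$ and a fixed $\psi\in C_{c}^{\infty}(B)$, place translated rescaled copies of $\psi$ of height $0$ or $\epsilon$ on a regular grid of mesh $h\sim\epsilon^{1/\alpha}$ inside $B$ (so $J\sim h^{-n}$ bumps), and form $q_{\sigma}=\epsilon\sum_{j}\sigma_{j}\psi_{j}$ for $\sigma\in\{0,1\}^{J}$. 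Each $q_{\sigma}\in C^{\alpha}(\Omega)$ with $\|q_{\sigma}\|_{L^{\infty}(\Omega)}=\epsilon\le R$, and a Varshamov--Gilbert (greedy packing) argument extracts $q_{1},\dots,q_{N}$ with $\|q_{m}-q_{m'}\|_{L^{\infty}(\Omega)}\ge\epsilon$ for $m\neq m'$ and
\[
N\ge\exp\LC c_{0}\,\epsilon^{-n/\alpha}\RC ,
\]
where $c_{0}=c_{0}(n,\alpha)$ can be made as large as we wish by shrinking the mesh by a fixed factor (the only cost being an enlargement of the $C^{\alpha}$‑bound of the $q_{m}$, which is harmless since the theorem asks only for $C^{\alpha}$‑membership and an $L^{\infty}$‑bound). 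No PDE enters this step.

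The decisive, and hardest, ingredient is the entropy upper bound: the set $\{\Lambda_{q}:\|q\|_{L^{\infty}(\Omega)}\le R\}$ is, in the operator norm $\|\cdot\|_{*}'$, bounded (a consequence of the well‑posedness Theorem~\ref{thm:well-posedness} and an energy estimate) and, for each $\delta\in(0,1)$, covered by at most $\exp\LC C_{1}\,|\log\delta|^{\beta}\RC$ balls of radius $\delta$, with exponent $\beta\le 2n+1$. The mechanism is smoothing of solutions off the support of $q$. Given $\|q_{j}\|_{L^{\infty}(\Omega)}\le R$, a cutoff $\chi\in C_{c}^{\infty}((0,T))$ and $g\in L^{2}(B_{3}\setminus\overline{B_{2}})$, let $u_{j}$ solve \eqref{fractional wave in uniqueness} with exterior data $\chi(t)g(x)$; then $w:=\chi(u_{1}-u_{2})$ has zero initial data, vanishes in $(\Omega_{e})_{T}$, and solves on $\Omega_{T}$ a hyperbolic equation whose right‑hand side is controlled by $\|q_{1}-q_{2}\|_{L^{\infty}(\Omega)}\|\chi\|_{W^{2,\infty}(0,T)}^{2}\|g\|_{L^{2}}$ in the natural $H^{2}$‑in‑time energy space — the two $t$‑derivatives and the two factors of $\chi$ being exactly what the $W^{2,\infty}$‑normalization in $\|\cdot\|_{*}'$ accounts for. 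Since the measurement annulus $B_{3}\setminus\overline{B_{2}}$ has positive distance from $\overline{\Omega}=\overline{B_{1}}$, expanding $(-\Delta)^{s}$ shows that the Schwartz kernel of $\chi\LC\Lambda_{q_{1}}-\Lambda_{q_{2}}\RC\chi$ is, in the $2n$ spatial variables, the restriction of a function holomorphic on a fixed complex neighbourhood of $(B_{3}\setminus\overline{B_{2}})^{2}$, with sup‑norm $\lesssim\|q_{1}-q_{2}\|_{L^{\infty}(\Omega)}\|\chi\|_{W^{2,\infty}}^{2}$; realising $(-\Delta)^{s}$ through the Caffarelli--Silvestre extension makes this holomorphy explicit. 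The classical Kolmogorov--Tikhomirov/Vitushkin estimate for bounded families of holomorphic functions of $d$ complex variables, $\log(\text{covering number})\lesssim|\log\delta|^{d+1}$ with $d=2n$ here, then yields the bound with $\beta=2n+1$.

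With these in hand, fix any $\epsilon<c_{R,T,n,s}$, take the family $q_{1},\dots,q_{N}$ above, and put $\delta:=K\exp\LC-\epsilon^{-n/((2n+1)\alpha)}\RC$, so $|\log\delta|\le C\epsilon^{-n/((2n+1)\alpha)}$ and hence, by the entropy bound with $\beta\le 2n+1$, the operators $\{\Lambda_{q}\}$ lie in a union of at most $\exp\LC C_{1}|\log\delta|^{\beta}\RC\le\exp\LC C_{2}\,\epsilon^{-n/\alpha}\RC$ balls of radius $\delta/2$. If every pair satisfied $\|\Lambda_{q_{m}}-\Lambda_{q_{m'}}\|_{*}'>\delta$, the $N$ operators $\Lambda_{q_{m}}$ would be forced into distinct balls, giving $N\le\exp(C_{2}\epsilon^{-n/\alpha})$, contradicting $N\ge\exp(c_{0}\epsilon^{-n/\alpha})$ once $c_{0}>C_{2}$ (arranged in the first step) and $\epsilon$ is small enough to absorb the lower‑order corrections — this fixes $c_{R,T,n,s}$, while $K=K_{R,T,n,s}$ may be taken to be $1$ (or the a priori bound for $\|\Lambda_{q}\|_{*}'$). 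Therefore some pair $q_{1}:=q_{m}$, $q_{2}:=q_{m'}$ satisfies $\|q_{1}-q_{2}\|_{L^{\infty}(\Omega)}\ge\epsilon$ and $\|\Lambda_{q_{1}}-\Lambda_{q_{2}}\|_{*}'\le K\exp(-\epsilon^{-n/((2n+1)\alpha)})$, which is exactly \eqref{eq:opt-discrete}--\eqref{eq:opt-DN-map-small}.

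The one genuinely delicate point is the entropy bound for the DN maps, i.e.\ the quantitative holomorphic extension of the cut‑off exterior DN kernel for the \emph{hyperbolic} problem. Unlike the elliptic fractional Calder\'on problem, the wave propagator does not smooth in time, so one must check carefully that the $t$‑variable contributes only a fixed polynomial factor (of degree $\le 2$, matched to the $H^{2}$/$W^{2,\infty}$ setting), never an exponential one, to the metric entropy; it is precisely this that keeps the exponent at $2n+1$ and pins down the power $n/((2n+1)\alpha)$ in \eqref{eq:opt-DN-map-small}. A clean way to organise it is to first establish, via the Caffarelli--Silvestre extension together with a Duhamel representation for $u_{1}-u_{2}$, an analytic‑smoothing estimate for the difference of solutions on $(\Omega_{e})_{T}$ away from $\overline{\Omega}$ in terms of $\|q_{1}-q_{2}\|_{L^{\infty}(\Omega)}$, and only afterwards read off the kernel bound and feed it into the entropy estimate.
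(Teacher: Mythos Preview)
Your overall architecture is the right one and matches the paper: a Mandache-type pigeonhole playing a packing lower bound for potentials against a covering upper bound for the DN maps, with the exponent $2n+1$ and the resulting power $n/((2n+1)\alpha)$ coming out exactly as in the paper. The bump-function packing (your first step) is precisely Lemma~\ref{lem:opt-discrete-set}, and your remark that one can inflate the $C^{\alpha}$ bound to beat the constant on the covering side is exactly how the paper closes the argument (choosing $\beta$ large in the proof of Theorem~\ref{thm:main-instability}). You also correctly isolate that the time variable contributes no entropy, only the factor $\|\chi\|_{W^{2,\infty}}^{2}$, which is the point of the norm $\|\cdot\|_{*}'$.

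Where you diverge from the paper is in the covering/$\delta$-net step. The paper does \emph{not} prove holomorphic extension of the kernel and then invoke Kolmogorov--Tikhomirov--Vitushkin. Instead it fixes the explicit R\"uland--Salo orthonormal basis $\{Y_{mk\ell}\}$ of $L^{2}(B_{3}\setminus\overline{B_{2}})$ (Proposition~\ref{prop:opt-special-basis}), for which the $s$-harmonic extensions into $B_{1}$ decay like $e^{-c(m+k)}$; this transfers, via the singular-integral formula for $(-\Delta)^{s}$ (the annulus and the ball are at distance $\ge1$), to the matrix entries $\Gamma_{m_{1}k_{1}\ell_{1}}^{m_{2}k_{2}\ell_{2}}(q)(t)$ of $\chi(\Lambda_{q}-\Lambda_{0})\chi$, yielding the uniform-in-$t$ bound of Lemma~\ref{lem:opt-matrix-repn-est}. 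The $\delta$-net is then built by discretising the finitely many entries with $\max\{m_{1}+k_{1},m_{2}+k_{2}\}\le\sigma_{*}\sim|\log\delta|$ and counting (Lemma~\ref{lem:opt-delta-net}), which produces the $|\log\delta|^{2n+1}$ directly. This is more elementary and more explicit than the analytic-entropy route; your approach would also work in principle, but your justification is off in one place: the Caffarelli--Silvestre extension is a real-variable harmonic-extension device and does not by itself give complex-analytic extension of the kernel. What actually furnishes the analyticity (and, in the paper, the exponential decay of matrix entries) is the Poisson/singular-integral representation $(-\Delta)^{s}w(x)=c_{n,s}\int_{B_{1}}\frac{-w(y)}{|x-y|^{n+2s}}\,dy$ for $x\in B_{3}\setminus\overline{B_{2}}$, together with the analogous Poisson formula for the $s$-harmonic extension of the exterior datum into $B_{1}$. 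If you want to carry out your version, that is the mechanism to invoke, not Caffarelli--Silvestre.
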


For $1$-dimensional case ($n=1$), we can also establish the same estimate.

\begin{thm}[Exponential instability II] \label{thm:1dim-main-instability}
	For $n=1$, Theorem~{\rm \ref{thm:main-instability}} is also valid
	with the norm $\|\cdot\|_{*}'$ being replaced by the following norm: 
	\[
	\|\mathcal{A}\|_{*}'':=\sup_{0\not\equiv\chi\in C_{c}^{\infty}((0,T))}\frac{\sup_{t\in(0,T)}\|\chi\mathcal{A}\chi\|_{L^{2}((2,3))\rightarrow L^{2}((2,3))}(t)}{\|\chi\|^2_{W^{2,\infty}(0,T)}}.
	\]
\end{thm}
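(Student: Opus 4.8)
The plan is to deduce Theorem~\ref{thm:1dim-main-instability} from the strategy used to prove Theorem~\ref{thm:main-instability} rather than repeating the whole argument; the point is that the $n\ge 2$ proof is constructive and its only dimension-sensitive ingredient is the way the exterior data and the measurement annulus $B_3\setminus\overline{B_2}$ are used. First I would isolate the construction of the "bad" pair of potentials $q_1,q_2$ from the proof of Theorem~\ref{thm:main-instability}. That construction proceeds by: (i) choosing an $\epsilon$-separated, $\delta$-dense family of potentials in a finite-dimensional ball of $C^\alpha$, so that by a pigeonhole/entropy count two of them must have DN maps that are exponentially close once the DN map is shown to be "almost finite rank"; and (ii) bounding the forward map $q\mapsto \Lambda_q$ by a smoothing estimate coming from the Caffarelli--Silvestre extension and interior regularity of the fractional wave equation, which forces $\Lambda_{q_1}-\Lambda_{q_2}$, measured between Sobolev spaces on the far exterior region, to be controlled by the singular values of a compact operator decaying like $e^{-c k^{1/n}}$ or similar. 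For $n=1$ each of these steps goes through verbatim, with $B_1\subset\mathbb R$, $\Omega=(-1,1)$, and the annulus $B_3\setminus\overline{B_2}$ replaced by its one-dimensional analogue, the interval $(2,3)$ (one may also take $(-3,-2)$; by symmetry it does not matter).

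The key steps, in order, are as follows. Step 1: restate the quantitative forward bound used in the higher-dimensional proof, namely that $\chi\LC\Lambda_{q_1}-\Lambda_{q_2}\RC\chi$ acting between $L^2$-based Sobolev spaces localized away from $\overline\Omega$ is a smoothing operator whose norm is dominated by $\|q_1-q_2\|$ in a weak norm and whose "effective rank at accuracy $\eta$" is $O((\log(1/\eta))^{n})$; check that the proof of this bound (Caffarelli--Silvestre extension, Runge approximation, propagation of smallness) nowhere used $n\ge 2$ — indeed for $n=1$ the extension problem is a degenerate elliptic equation in the half-plane $\mathbb R\times\mathbb R_+$ and all the cited regularity results of Section~\ref{sec:forward} are stated for general $n\in\mathbb N$. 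Step 2: reproduce the entropy/packing argument: fix a cube $Q\Subset\Omega=(-1,1)$, inside $C^\alpha_c(Q)$ pick $N\sim \exp(c\,\rho^{-1/\alpha})$ functions that are pairwise $\gtrsim\epsilon$-separated in $L^\infty$ yet all supported in a fixed small set, with $\epsilon$ tied to the net scale; this is the standard Kolmogorov-entropy lower bound for Hölder balls and is dimension-independent in form (only the constants change). Step 3: combine — since the forward map compresses these $N$ potentials into an $\eta$-net of a space of effective dimension $O((\log(1/\eta))^{1})$, pigeonhole gives two potentials $q_1\ne q_2$ with $\|q_1-q_2\|_{L^\infty}\ge\epsilon$ but $\|\Lambda_{q_1}-\Lambda_{q_2}\|_{*}''\le e^{-\epsilon^{-n/((2n+1)\alpha)}}$ after optimizing $\eta$ in terms of $\epsilon$; plugging $n=1$ gives the exponent $\epsilon^{-1/(3\alpha)}$ as claimed.

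The one genuine thing to verify — and the step I expect to be the main obstacle — is that the norm $\|\cdot\|_{*}''$ on $(2,3)$ is strong enough to still be controlled by the same smoothing/compactness estimate. In dimension $n\ge 2$ one exploits that $B_3\setminus\overline{B_2}$ is an open set with nonempty interior sitting at positive distance from $\overline\Omega=\overline{B_1}$, so that the weak-unique-continuation and propagation-of-smallness machinery applies to functions restricted there; for $n=1$ the interval $(2,3)$ is likewise a nonempty open set at positive distance from $\overline{(-1,1)}$, so literally the same local elliptic estimates for the Caffarelli--Silvestre extension hold. One must only make sure that the cutoff $\chi\in C_c^\infty((0,T))$ in the time variable and the normalization by $\|\chi\|_{W^{2,\infty}(0,T)}^2$ interact with the $t$-dependent extension in the same way as before; this is a routine check because the time variable plays the role of a parameter in the extension problem and the $W^{2,\infty}$-weight is exactly what absorbs the two time derivatives in $\p_t^2$. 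Granting that, no new estimate is needed and the theorem follows. \qed
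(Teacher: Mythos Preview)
Your proposal misidentifies the mechanism behind the instability proof and therefore misses the one genuinely dimension-sensitive step. The proof of Theorem~\ref{thm:main-instability} does \emph{not} use the Caffarelli--Silvestre extension, Runge approximation, or propagation of smallness; those tools belong to Section~\ref{sec:stability} and drive the \emph{stability} estimate, not the instability construction. The instability argument in Section~\ref{sec:optimality} is Mandache-type: it rests on (i) a special orthonormal basis $\{Y_{mk\ell}\}$ of $L^2(B_3\setminus\overline{B_2})$ (Proposition~\ref{prop:opt-special-basis}) whose associated $s$-harmonic extensions $\tilde Y_{mk\ell}$ decay exponentially in $L^2(B_1)$, (ii) the resulting exponential decay of the matrix entries $\Gamma_{m_1k_1\ell_1}^{m_2k_2\ell_2}(q)$ (Lemma~\ref{lem:opt-matrix-repn-est}), (iii) a $\delta$-net count for the image (Lemma~\ref{lem:opt-delta-net}), and (iv) Kolmogorov entropy for $C^\alpha$ plus pigeonhole. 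You have (iii)--(iv) right in spirit, but you attribute (i)--(ii) to the wrong machinery.

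The actual dimension-sensitive ingredient is Proposition~\ref{prop:opt-special-basis}: that basis is built from spherical harmonics on $\mathbb{S}^{n-1}$ tensored with a radial family, and the exponential decay of $\tilde Y_{mk\ell}$ comes from the Poisson kernel for $(-\Delta)^s$ on $B_1$. When $n=1$ the ``sphere'' $\partial B_1$ is just the two points $\{-1,1\}$, so the spherical-harmonic construction collapses and Proposition~\ref{prop:opt-special-basis} is unavailable (the paper remarks on this explicitly). Hence your Step~1 claim that the relevant bound ``nowhere used $n\ge 2$'' is false. The fix, carried out in Proposition~\ref{prop:1-basis-1dim}, is to build by hand a real-valued orthonormal basis $\{Y_k\}$ of $L^2((2,3))$: one writes the Poisson representation $\tilde Y_k(x)=c(1-x^2)^s\int_2^3 (r-x)^{-1}(r^2-1)^{-s}Y_k(r)\,dr$, takes the ansatz $Y_k(r)=r(r^2-1)^s g_k(r)$, expands $(1-x/r)^{-1}$ as a geometric series, and then runs Gram--Schmidt in a weighted inner product to force $\int_2^3 r^{-j}g_k(r)\,dr=0$ for $0\le j\le k-1$. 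This kills the first $k$ terms of the series and gives $|\tilde Y_k(x)|\le C\,2^{-k}$. Once that basis exists, the one-dimensional analogues of Lemmas~\ref{lem:opt-matrix-repn}--\ref{lem:opt-delta-net} (namely Lemmas~\ref{lem:1-norm-change}--\ref{lem:1-delta-net}) go through with only cosmetic changes, and the final pigeonhole step is identical. So the schematic ``entropy count $+$ smoothing bound $+$ pigeonhole'' is correct, but the smoothing bound is not a consequence of the extension/UCP machinery and does require new work at $n=1$; that work, not the compatibility of the norm $\|\cdot\|_*''$, is the real obstacle.
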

	
For the local counterpart, let us consider the following initial boundary value
problem for the local wave equation:
\begin{align}\label{eq:local-wave-equation}
	\begin{cases}
		\LC \partial_{t}^{2}-\Delta+q(x)\RC u=0 & \text{ in }\Omega_{T},\\
		\partial_{\nu}u(x,t)=g(x,t) & \text{ in }(\partial\Omega)_{T},\\
		u=\partial_{t}u=0 & \text{ in }\Omega\times\{0\},
	\end{cases}
\end{align}
where $q=q(x)\in L^{\infty}(\Omega)$. It is known that \eqref{eq:local-wave-equation} is well-posed (for example, see \cite{Evan}) with suitable compatibility conditions. Assuming the well-posedness of \eqref{eq:local-wave-equation}, the corresponding (hyperbolic) \emph{Neumann-to-Dirichlet
	map} of \eqref{eq:local-wave-equation} is defined by 
\[
\wt\Lambda_{q}g:=u|_{\partial\Omega\times[0,T]}\quad\text{ for all }g\in C_{c}^{\infty}((\partial\Omega)_{T}).
\]
In fact, $\wt\Lambda_{q}:L^{2}(\partial\Omega\times(0,T))\rightarrow H^{1}(0,T;L^{2}(\partial\Omega))$
is a bounded linear operator, which can be proved by the energy estimate
of \eqref{eq:local-wave-equation}, see e.g. \cite[Section 6.7.5]{CP82PDE}.
Now we assume 
\begin{equation}
	T>{\rm diam}\,(\Omega).\label{eq:propagation-time}
\end{equation}
Under assumption \eqref{eq:propagation-time}, in \cite{RW88uniqueness},
they showed the global uniqueness result for time-independent potentials: 
\[
\wt\Lambda_{q_{1}}=\wt\Lambda_{q_{2}}\quad\text{implies}\quad q_{1}=q_{2} \text{ in }\Omega.
\]
In \cite{Sun90stability}, the author showed that, if \eqref{eq:propagation-time}
holds, under some apriori assumptions, the following estimate hold:
\begin{equation}\label{eq:Holder-stability-wave}
\|q_{1}-q_{2}\|_{L^{2}(\Omega)}\le C\left\|\wt\Lambda_{q_{1}}-\wt\Lambda_{q_{2}}\right\|_{\mathcal{L}}^{\alpha} 
\end{equation}
for some constants $C$ and $\alpha$, where $\norm{\cdot}_{\mathcal{L}}$ stands for the operator norm for the Neumann-to-Dirichlet map. A similar estimate also holds for the hyperbolic Dirichlet-to-Neumann map \cite{alessandrini1990stability}. In other words, the stability of the
inverse problem for the local wave equation is of \emph{H\"{o}lder-type}. We also mention other related results of inverse problems for the local wave equation with potentials \cite{eskin2006new,eskin2007inverse,isakov1991completeness,kian2017unique,ramm1991inverse,salazar2013determination}.

Similar to the local version, we can prove the
global uniqueness result for time-independent potentials for the fractional wave equation (see Theorem~\ref{thm:global uniqueness}).
However, in the nonlocal counterpart of \eqref{eq:Holder-stability-wave}, we 
show that the stability of the inverse problem for the fractional wave equation is of (optimal) \emph{logarithmic-type} in view of Theorem~\ref{thm:main-stability} and Theorem~\ref{thm:main-instability}. 
We also want to point out that we do not need to assume the \emph{large influence time condition} \eqref{eq:propagation-time}. One possible explanation is that while the \emph{speed of propagation} of the local wave equation is \emph{finite}, the speed of propagation of the fractional wave operator is \emph{infinite} due the nonlocal nature of the fractional Laplacian $(-\Delta)^s$, for $0<s<1$. We will offer some detailed arguments in Section \ref{sec:forward}.

Before ending this section, we would like to discuss some interesting results for the time-harmonic wave equation. Consider the time-harmonic wave equation with a potential
	(a.k.a. Schr\"{o}dinger equation): 
	\begin{equation}
		\LC -\Delta+q(x)-\kappa^{2}\RC v=0\quad\text{ in }\;\Omega.\label{eq:Schrodinger-frequency}
	\end{equation}
	Ignoring the effect of the frequency $\kappa>0$, Alessandrini \cite{Ale88stable} proved the well-known logarithmic stability estimate for the inverse boundary value problem of \eqref{eq:Schrodinger-frequency},
	and Mandache \cite{Man01instability} established that this logarithmic
	estimate is optimal by showing that the inverse problem is exponentially
	unstable. Nonetheless, by taking the frequency into account, it was shown
	in \cite{INUW14increasingstability} that 
	\begin{equation}
		\|q_{1}-q_{2}\|_{H^{-\alpha}(\mathbb{R}^{n})}\le C\LC \kappa+\log\frac{1}{{\rm dist}\,(\mathcal{C}_{q_{1}},\mathcal{C}_{q_{2}})}\RC^{-2\alpha-n}+C\kappa^{4}{\rm dist}\LC \mathcal{C}_{q_{1}},\mathcal{C}_{q_{2}}\RC,\label{eq:increasing-stability}
	\end{equation}
	where $\mathcal{C}_{q_1}, \mathcal{C}_{q_2}$ are the Cauchy data of the Schr\"{o}dinger
	equation \eqref{eq:Schrodinger-frequency} corresponding to $q_1, q_2$, and ${\rm dist}\,\LC \mathcal{C}_{q_{1}},\mathcal{C}_{q_{2}}\RC$
	is the Hausdorff distance between $\mathcal{C}_{q_1}$ and $\mathcal{C}_{q_2}$. Isakov \cite{Isa11increasingstability} proved
	a similar estimate in terms of the DN maps.

	The estimate \eqref{eq:increasing-stability} is shown to be optimal in the
	recent paper \cite{KUW21instability}. The estimate \eqref{eq:increasing-stability}
	clearly indicates that the logarithmic part decreases as the frequency
	$\kappa>0$ increases and the estimate changes from a logarithmic
	type to a H\"{o}lder type. This phenomena is termed as the \emph{increasing
		stability}. It is interesting to compare the stability estimate \eqref{eq:increasing-stability}
	of the time-harmonic wave equation \eqref{eq:Schrodinger-frequency}
	with the stability estimate \eqref{eq:Holder-stability-wave} of the local
	wave equation \eqref{eq:local-wave-equation}. 
	
	Similarly to the local wave equation, we consider the following time-harmonic fractional wave equation
	\begin{equation}\label{eq:fractional-Schrodinger}
		\LC (-\Delta)^{s}+q(x)-\kappa^{2}\RC v=0\quad\text{ in }\Omega,
	\end{equation} 
    which is a fractional Schr\"odinger equation.
	Without considering the effect of the frequency $\kappa>0$, R\"{u}land and
	Salo \cite{RS20Calderon} obtained a logarithmic type stability estimate for the inverse
	boundary value problem of the time-harmonic fractional wave equation
	\eqref{eq:fractional-Schrodinger} and, in \cite{RS18Instability}, they proved that such logarithmic estimate is optimal by showing the exponential instability phenomenon. These results give rise to a natural question: in the inverse boundary value problem for \eqref{eq:fractional-Schrodinger}, if we take the frequency $\kappa$ into account, does the increasing stability estimate similar to \eqref{eq:increasing-stability} hold? In view of the optimal logarithmic stability
	results in Theorem~\ref{thm:main-stability} and Theorem~\ref{thm:main-instability}, we have a strong reason to believe that the answer to this question is negative.

	The paper is organized as follows. We discuss and prove the well-posedness of the fractional wave equation in Section~\ref{sec:forward} and in Appendix \ref{Appendix}, respectively. We then prove Theorem~\ref{thm:global uniqueness} in Section~\ref{sec:uniqueness}, and prove Theorem~\ref{thm:main-stability} in Section~\ref{sec:stability}. The approach is mainly based on the qualitative and quantitative Runge approximation properties for the fractional wave equation. Finally, we prove Theorem~\ref{thm:main-instability} and Theorem~\ref{thm:1dim-main-instability} in Section~\ref{sec:optimality}.

	\section{The forward problems for the fractional wave equation}\label{sec:forward}
	In this section, we provide all preliminaries that we need in the rest of the paper. Let us first recall (fractional) Sobolev spaces and prove the well-posedness of the fractional wave equation \eqref{fractional wave main}.
	
	\subsection{Sobolev spaces}\label{subsec:Sobolev}
	Let $\mathcal{F},\ \mathcal{F}^{-1}$ be Fourier transform and its inverse, respectively.
	For $s\in(0,1)$, the fractional Laplacian is defined via 
		\[
		(-\Delta)^{s}u:=\mathcal{F}^{-1}\left(|\xi|^{2s}\mathcal{F}(u)\right), \text{ for }u\in H^s (\R^n),
		\]
		where $H^s(\R^n)$ stands for the $L^2$-based fractional Sobolev space (see \cite{di2012hitchhiks,kwasnicki2017ten,stein2016singular}). 
	The space $H^{a}(\R^{n})=W^{a,2}(\R^{n})$ denotes the (fractional) Sobolev space equipped with the norm 
\[
\|u\|_{H^{a}(\R^{n})}:=\left\Vert \mathcal{F}^{-1}\left\{\left\langle \xi\right\rangle ^{a}\mathcal{F}u\right\}\right\Vert _{L^{2}(\R^{n})},
\]
for any $a\in \R$, where $\left\langle \xi\right\rangle =(1+|\xi|^{2})^{\frac{1}{2}}$.
It is known that for $s\in(0,1)$, $\|\cdot\|_{H^{s}(\R^{n})}$
has the following equivalent representation 
\begin{equation*}
	\|u\|_{H^{s}(\mathbb{R}^{n})}:=\|u\|_{L^{2}(\R^{n})}+[u]_{H^{s}(\R^{n})}\label{eq:NormHs}
\end{equation*}
where 
\[
[u]_{H^{s}(\mathcal{O})}^{2}:=\int_{\mathcal{O}\times\mathcal{O}}\frac{\left|u(x)-u(y)\right|^{2}}{|x-y|^{n+2s}}\, dxdy,
\]
for any open set $\mathcal{O}\subset \R^{n}$.

Given any open set $\mathcal{O}$ of $\mathbb{R}^{n}$ and $a\in\mathbb{R}$,
let us define the following Sobolev spaces, 
\begin{align*}
	H^{a}(\mathcal{O}) & :=\{u|_{\mathcal{O}};\,u\in H^{a}(\R^{n})\},\\
    \wt H^{a}(\mathcal{O}) & :=\text{closure of \ensuremath{C_{c}^{\infty}(\mathcal{O})} in \ensuremath{H^{a}(\R^{n})}},\\
	H_{0}^{a}(\mathcal{O}) & :=\text{closure of \ensuremath{C_{c}^{\infty}(\mathcal{O})} in \ensuremath{H^{a}(\mathcal{O})}},
\end{align*}
and 
\[
H_{\overline{\mathcal{O}}}^{a}:=\{u\in H^{a}(\R^{n});\,\mathrm{supp}(u)\subset\overline{\Omega}\}.
\]
In addition, the Sobolev space $H^{a}(\mathcal{O})$ is complete under the norm
\[
\|u\|_{H^{a}(\mathcal{O})}:=\inf\left\{ \|v\|_{H^{a}(\mathbb{R}^{n})};v\in H^{a}(\mathbb{R}^{n})\mbox{ and }v|_{\mathcal{O}}=u\right\} .
\]
It is not hard to see that $\widetilde{H}^{a}(\mathcal{O})\subseteq H_{0}^{a}(\mathcal{O})$,
and that $H_{\overline{\mathcal{O}}}^{a}$ is a closed subspace of
$H^{a}(\R^{n})$.
We also denote $H^{-s}(\mathcal{O})$ to be the dual space of $\wt H^s(\mathcal{O})$. In fact, $H^{-s}(\mathcal{O})$ has the following characterization: 
\[
H^{-s}(\mathcal{O}) = \begin{Bmatrix} u|_{\mathcal{O}} : u \in H^{-s}(\mathbb{R}^{n}) \end{Bmatrix} \quad \text{with} \quad \inf_{w\in H^{s}(\mathbb{R}^{n}),w|_{\mathcal{O}}=u} \| w \|_{H^{s}(\mathbb{R}^{n})},
\]
see e.g. \cite[Section 2.1]{ghosh2016calder}, \cite[Chapter 3]{mclean2000strongly}, or \cite{triebel2002function} for more details about the fractional Sobolev spaces. Moreover, we will use 
 \begin{align*}
	\LC f, g \RC_{L^2(A)}:=\int_A f g \, dx, \quad \LC F, G \RC _{L^2(A_T)}:=\int_0^T \int_{A} FG \, dxdt,
\end{align*}
in the rest of this paper, for any set $A\subset \R^n$.

\subsection{The forward problem}
	
	We first state the well-posedness of the fractional wave equation.
	As above, let $\Omega\subset \R^n$ be a bounded Lipschitz domain with $n\in \N$. Given $T>0$, $s\in(0,1)$, and $q=q(x)\in L^\infty(\Omega)$, consider the initial exterior value problem for the fractional wave equation
	\begin{align}\label{fractional wave well-posedness}
		\begin{cases}
			\LC \p_t^2 + (-\Delta)^s +q \RC u=F & \text{ in }\Omega_T ,\\
			u=f  & \text{ in }(\Omega_e)_T,\\
			u=\varphi, \quad \p _t u=\psi & \text{ in }\R^n \times \{0\},
		\end{cases}
	\end{align}
	where $f\in C^\infty_c(W_T)$ for some open set with Lipschitz boundary $W\subset \Omega_e$ satisfying $\overline{W}\cap \overline{\Omega}=\emptyset$, $\varphi\in\wt H^{s}(\Omega)$, and $\psi\in L^{2}(\mathbb{R}^{n})$ with $\rm{supp}\,(\psi) \subset \Omega$. We want to show the well-posedness of \eqref{fractional wave well-posedness}. Setting $v:=u-f$, we then consider the fractional wave equation with zero exterior data 
	\begin{align}\label{fractional wave zero exterior}
		\begin{cases}
			\LC \p_t^2 + (-\Delta)^s +q \RC v=\wt F & \text{ in }\Omega_T ,\\
			v=0 & \text{ in }(\Omega_e)_T,\\
			v=\wt \varphi, \quad \p _t v=\wt \psi & \text{ in }\R^n \times \{0\},
		\end{cases}
	\end{align}  
    where $\wt F:=F-(-\Delta)^s f$, $\wt \varphi (x)= \varphi(x)- f(x,0)=\varphi(x)$ and $\widetilde{\psi}(x)=\psi(x)-\p_t f(x,0)=\psi (x)$. Hence, we simply denote the initial data as $(\varphi,\psi)$ in the rest of the paper. Now it suffices to study the well-posedness of \eqref{fractional wave well-posedness}.
	
		Let us introduce the following notations. Define 
	$$
	\bm{u}:[0,T]\to \wt H^s(\Omega)
	$$	
	by 
	\[
	[\bm{u}(t)](x):=u(x,t), \text{ for }x\in \R^n, t\in [0,T].
	\]
	Similarly, the function $\bm{\wt F}:[0,T]\to L^2(\Omega)$ can be defined analogously by
	\[
	[\bm{\wt F}(t)](x):=\wt F(x,t), \text{ for }x\in \R^n, \ t\in [0,T].
	\]
	With these notations at hand, we can define the weak formulation for the fractional wave equation. Let $\phi\in \wt H^s(\Omega)$ be any test function, multiplying \eqref{fractional wave well-posedness} with $\phi$ gives 
	\begin{align*}
		\LC \bm{v}'', \phi \RC_{L^2(\Omega)} + B[\bm{v},\phi;t]=(\bm{\wt F},\phi)_{L^2(\Omega)}, \text{ for }0\leq t\leq T,
	\end{align*}
	where  $B[\bm{v},\phi;t]$ is the bilinear form defined via
	\begin{align*}
		B[\bm{v}, \phi ;t]:=\int_{\R^n} (-\Delta)^{s/2} \bm{v} (-\Delta)^{s/2}\phi  \, dx +\int_{\Omega} q \bm{v}\phi  \, dx.
	\end{align*}

	\begin{defi}[Weak solutions]\label{def:weak-soln}
		A function 
		\[
		\bm{v}\in L^2(0,T; \wt H^s(\Omega)), \text{ with }\bm{v}'\in L^2(0,T;L^2(\Omega)) \text{ and } \bm{v}''\in L^2(0,T;H^{-s}(\Omega))
		\]
		is a \emph{weak solution} of the initial exterior value problem \eqref{fractional wave zero exterior} if 
		\begin{itemize}
			\item[(1)] $(\bm{v}''(t),\phi)_{L^2(\Omega)}+B[\bm{v},\phi;t]=\LC \bm{\wt F},\phi\RC _{L^2(\Omega)}$, for all $\phi\in \wt H^s(\Omega)$, and for  $0\leq t \leq T$ a.e.
			
			\item[(2)] $\bm{v}(0)= \wt\varphi$ and $\bm{v}'(0)= \wt\psi$.
		\end{itemize}
	\end{defi}

	\begin{thm}[Well-posedness] \label{thm:well-posedness}
		For any $\bm{\wt F}\in L^2(0,T;L^{2}(\Omega))$, $\wt\varphi\in \wt H^{s}(\Omega)$, and $\wt\psi \in L^{2}(\mathbb{R}^{n})$ with $\rm{supp}\,(\wt\psi)\subset \Omega$, there exists a unique weak solution $\bm{v}$ to \eqref{fractional wave zero exterior}. Moreover, the following estimate holds:
		\begin{align}
			\begin{split}
				& \|\bm{v}\|_{L^{\infty}(0,T;\wt {H}^{s}(\Omega))}+\|\partial_{t}\bm{v}\|_{L^{\infty}(0,T;L^{2}(\Omega))} \\
				& \qquad  \le C \LC \|\bm{\wt F}\|_{L^{2}(0,T;L^{2}(\Omega))}+\|\wt\varphi\|_{\wt{H}^{s}(\Omega)}+\|\wt\psi\|_{L^{2}(\Omega)}\RC.\label{eq:energy-est0}
			\end{split}
		\end{align}
	\end{thm}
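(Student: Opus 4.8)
The plan is to use the standard Galerkin method for hyperbolic equations, adapted to the nonlocal setting. First I would choose a countable orthogonal basis $\{w_k\}_{k\in\N}$ of the Hilbert space $\wt H^s(\Omega)$ which is orthonormal in $L^2(\Omega)$; the natural choice is the eigenfunctions of the spectral fractional Dirichlet Laplacian on $\Omega$, i.e. the eigenfunctions of the bilinear form $\int_{\R^n}(-\Delta)^{s/2}u\,(-\Delta)^{s/2}v\,dx$ with form domain $\wt H^s(\Omega)$, which is coercive on $\wt H^s(\Omega)$ by the fractional Poincar\'e inequality (valid since $\Omega$ is bounded). Then I would look for approximate solutions of the form $\bm v_m(t)=\sum_{k=1}^m d_m^k(t)\,w_k$ solving the finite-dimensional system
\[
(\bm v_m''(t),w_k)_{L^2(\Omega)}+B[\bm v_m,w_k;t]=(\bm{\wt F}(t),w_k)_{L^2(\Omega)},\qquad k=1,\dots,m,
\]
with initial data $d_m^k(0)$, $(d_m^k)'(0)$ chosen as the Fourier coefficients of the projections of $\wt\varphi$, $\wt\psi$ onto $\spa\{w_1,\dots,w_m\}$. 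This is a linear second-order ODE system with continuous (here even constant) coefficients and an $L^2$-in-time forcing, so it has a unique solution $d_m^k\in H^2(0,T)$ by Carath\'eodory/Picard theory.

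Next comes the energy estimate. I would multiply the $k$-th equation by $(d_m^k)'(t)$ and sum over $k$, obtaining
\[
\frac{1}{2}\frac{d}{dt}\LC \|\bm v_m'(t)\|_{L^2(\Omega)}^2 + \|(-\Delta)^{s/2}\bm v_m(t)\|_{L^2(\R^n)}^2\RC
= (\bm{\wt F}(t),\bm v_m'(t))_{L^2(\Omega)} - \int_\Omega q\,\bm v_m\,\bm v_m'\,dx.
\]
Using $\|q\|_{L^\infty(\Omega)}\le\|q\|_{L^\infty}$, Cauchy--Schwarz, Young's inequality and the coercivity/Poincar\'e equivalence of $\|(-\Delta)^{s/2}\cdot\|_{L^2(\R^n)}$ with $\|\cdot\|_{\wt H^s(\Omega)}$ on functions supported in $\overline\Omega$, I would bound the right-hand side by $C\|\bm{\wt F}(t)\|_{L^2(\Omega)}^2 + C\LC\|\bm v_m'(t)\|_{L^2(\Omega)}^2 + \|\bm v_m(t)\|_{\wt H^s(\Omega)}^2\RC$, and then Gr\"onwall's inequality (after integrating in $t$ and using that the initial data satisfy $\|\bm v_m(0)\|_{\wt H^s(\Omega)}\le\|\wt\varphi\|_{\wt H^s(\Omega)}$, $\|\bm v_m'(0)\|_{L^2(\Omega)}\le\|\wt\psi\|_{L^2(\Omega)}$) yields the uniform bound
\[
\sup_{0\le t\le T}\LC\|\bm v_m(t)\|_{\wt H^s(\Omega)}+\|\bm v_m'(t)\|_{L^2(\Omega)}\RC \le C\LC\|\bm{\wt F}\|_{L^2(0,T;L^2(\Omega))}+\|\wt\varphi\|_{\wt H^s(\Omega)}+\|\wt\psi\|_{L^2(\Omega)}\RC,
\]
with $C=C(n,s,\Omega,T,\|q\|_{L^\infty})$. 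A separate (and routine) estimate from the equation itself, testing against $\phi\in\wt H^s(\Omega)$, gives a uniform bound on $\bm v_m''$ in $L^2(0,T;H^{-s}(\Omega))$.

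Then I would pass to the limit: the uniform bounds give a subsequence with $\bm v_m\rightharpoonup\bm v$ weakly-$*$ in $L^\infty(0,T;\wt H^s(\Omega))$, $\bm v_m'\rightharpoonup\bm v'$ weakly-$*$ in $L^\infty(0,T;L^2(\Omega))$, and $\bm v_m''\rightharpoonup\bm v''$ weakly in $L^2(0,T;H^{-s}(\Omega))$. Fixing $k$ and a scalar test function in $C^\infty_c(0,T)$, one passes to the limit in the Galerkin identity (linearity makes every term pass to the limit under weak convergence), and since finite linear combinations of the $w_k$ are dense in $\wt H^s(\Omega)$, the weak formulation (1) of Definition~\ref{def:weak-soln} holds. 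The initial conditions (2) follow by the usual argument: $\bm v\in C([0,T];L^2(\Omega))$ and $\bm v'\in C([0,T];H^{-s}(\Omega))$ after modification on a null set (by interpolation/Lions--Magenes type lemmas), and testing with $\phi\,\theta(t)$ for $\theta\in C^1([0,T])$ with $\theta(T)=0$ and integrating by parts twice identifies $\bm v(0)=\wt\varphi$, $\bm v'(0)=\wt\psi$. The bound \eqref{eq:energy-est0} is inherited from the Galerkin bound by weak lower semicontinuity of the norms. Finally, uniqueness: if $\bm v$ is a weak solution with $\bm{\wt F}=0$, $\wt\varphi=0$, $\wt\psi=0$, one cannot directly test with $\bm v'$ (it is only $L^2$ in space, while the equation lives in $H^{-s}$), so I would use the classical trick of testing with $\bm w(t):=\int_t^{t_0}\bm v(\tau)\,d\tau$ for fixed $t_0\in(0,T]$ (which lies in $\wt H^s(\Omega)$ for each $t$), producing after integration in $t$ an energy identity $\frac{1}{2}\|\bm v(t_0)\|^2_{L^2(\Omega)}+\frac12\|(-\Delta)^{s/2}\bm w(0)\|^2_{L^2(\R^n)}=-\int_0^{t_0}\!\int_\Omega q\,\bm v\,\bm w\,dx\,dt$ that, with Gr\"onwall, forces $\bm v\equiv 0$.

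I expect the main obstacle to be purely technical rather than conceptual: making the uniqueness argument rigorous, since the natural energy test function $\bm v'$ is not an admissible element of $\wt H^s(\Omega)$-valued test space and one must instead carefully justify the integration-by-parts-in-time identity for the primitive $\bm w$, keeping track of the regularity $\bm v''\in L^2(0,T;H^{-s}(\Omega))$ versus $\bm w\in H^1(0,T;\wt H^s(\Omega))$; this, together with verifying that the basis of spectral fractional eigenfunctions really is simultaneously orthogonal in $\wt H^s(\Omega)$ and $L^2(\Omega)$ and dense, is where the bulk of the careful work lies. The low-regularity initial datum $\wt\psi\in L^2$ rather than $\wt\psi\in H^{-s}$-type duals is actually favorable here and causes no trouble.
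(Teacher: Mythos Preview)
Your proposal is correct and follows essentially the same route as the paper's proof in Appendix~A: Galerkin approximation with the fractional Dirichlet eigenbasis, the energy estimate via multiplication by $(d_m^k)'$ and Gr\"onwall, weak compactness to pass to the limit, and uniqueness via the Ladyzhenskaya-type test function $\bm w(t)=\int_t^{r}\bm v(\tau)\,d\tau$. The only cosmetic differences are that the paper invokes the Hardy--Littlewood--Sobolev inequality where you cite fractional Poincar\'e (both give the needed $L^2\lesssim \|(-\Delta)^{s/2}\cdot\|_{L^2}$ control on $\wt H^s(\Omega)$), and the paper passes to the limit weakly in $L^2(0,T;\cdot)$ rather than weak-$*$ in $L^\infty(0,T;\cdot)$.
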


	\begin{cor}\label{thm:well}
		Let $\Omega\subset \R^n$ be a bounded Lipschitz domain for $n\in \N$, and $W\subset\Omega_e$ be any open set with Lipschitz boundary satisfying $\overline{W}\cap \overline{\Omega}=\emptyset$. Then for any $F=F(x,t)\in L^2(0,T;L^{2}(\Omega))$, $f=f(x,t)\in C_{c}^{\infty}(W_T)$, $\varphi\in \wt H^{s}(\Omega)$, and $\psi \in L^{2}(\mathbb{R}^{n})$ with $\rm{supp}\,(\psi)\subset \Omega$, there exists a unique weak solution $u=v+f$ of \eqref{fractional wave well-posedness}, where $v\in L^2(0,T;\wt H^s(\Omega)) \cap H^1(0,T;L^2(\Omega))$ is the unique weak solution of \eqref{fractional wave zero exterior}. Furthermore, we have the following estimate
		\begin{align}\label{eq:energy-est}
			\begin{split}
				& \|u-f\|_{L^{\infty}(0,T;\wt {H}^{s}(\Omega))}+\|\partial_{t}(u - f)\|_{L^{\infty}(0,T;L^{2}(\Omega))} \\
				& \qquad  \le C \LC \|F-(-\Delta)^{s}f\|_{L^{2}(0,T;L^{2}(\Omega))}+\|\varphi\|_{\wt{H}^{s}(\Omega)}+\|\psi\|_{L^{2}(\Omega)}\RC.
			\end{split}
		\end{align}
	\end{cor}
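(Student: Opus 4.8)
The plan is to reduce the inhomogeneous-exterior-data problem \eqref{fractional wave well-posedness} to the zero-exterior-data problem \eqref{fractional wave zero exterior} by the substitution $v:=u-f$ already described above, and then invoke Theorem~\ref{thm:well-posedness}. By the notion of weak solution adopted for \eqref{fractional wave well-posedness}, a function $u$ solves \eqref{fractional wave well-posedness} if and only if $u=f$ in $(\Omega_e)_T$ and $v:=u-f$ is a weak solution of \eqref{fractional wave zero exterior} with source $\wt F=F-(-\Delta)^sf$ and initial data $(\wt\varphi,\wt\psi)=(\varphi,\psi)$; here one uses that $f\in C_c^\infty(W_T)$ vanishes in a neighbourhood of $\R^n\times\{0\}$, so that $f(\cdot,0)=\p_t f(\cdot,0)=0$ and hence $\wt\varphi=\varphi-f(\cdot,0)=\varphi$, $\wt\psi=\psi-\p_t f(\cdot,0)=\psi$. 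Consequently, existence and uniqueness of $u$ is equivalent to existence and uniqueness of such a $v$, and the estimate \eqref{eq:energy-est} is exactly \eqref{eq:energy-est0} rewritten after this substitution.

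It then remains to check that the data $(\wt F,\wt\varphi,\wt\psi)=(F-(-\Delta)^sf,\varphi,\psi)$ satisfy the hypotheses of Theorem~\ref{thm:well-posedness}. The conditions $\varphi\in\wt H^s(\Omega)$, $\psi\in L^2(\R^n)$ with $\supp(\psi)\subset\Omega$, and $F\in L^2(0,T;L^2(\Omega))$ are part of the assumptions, so the only point that needs verification is that $(-\Delta)^sf\in L^2(0,T;L^2(\Omega))$. Since $f\in C_c^\infty(W_T)$, for each fixed $t\in[0,T]$ the function $f(\cdot,t)$ belongs to $C_c^\infty(W)\subset\mathcal{S}(\R^n)$, so $(-\Delta)^sf(\cdot,t)=\mathcal{F}^{-1}(|\xi|^{2s}\mathcal{F}f(\cdot,t))\in H^r(\R^n)$ for every $r\in\R$; in particular its restriction to $\Omega$ lies in $L^2(\Omega)$. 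Moreover $t\mapsto f(\cdot,t)$ is smooth and compactly supported from $(0,T)$ into $C_c^\infty(W)$, and $(-\Delta)^s$ maps it continuously into $L^2(\Omega)$, so $t\mapsto\|(-\Delta)^sf(\cdot,t)\|_{L^2(\Omega)}$ is continuous with compact support in $(0,T)$, whence $(-\Delta)^sf\in L^2(0,T;L^2(\Omega))$. (One could alternatively exploit the separation hypothesis $\overline W\cap\overline\Omega=\emptyset$: for $x\in\Omega$ the integral defining $(-\Delta)^sf(x,t)$ is over $y\in\overline W$ only, with $|x-y|\ge\dist(\Omega,W)>0$, so $(-\Delta)^sf|_{\Omega}$ is in fact $C^\infty(\overline\Omega)$; this stronger statement is not needed here.)

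With these verifications in hand, I would apply Theorem~\ref{thm:well-posedness} to the data $(\wt F,\wt\varphi,\wt\psi)$ above to obtain a unique weak solution $v\in L^2(0,T;\wt H^s(\Omega))$ with $v'\in L^2(0,T;L^2(\Omega))$ and $v''\in L^2(0,T;H^{-s}(\Omega))$ of \eqref{fractional wave zero exterior}, satisfying \eqref{eq:energy-est0}. Setting $u:=v+f$ and substituting $\wt F=F-(-\Delta)^sf$, $\wt\varphi=\varphi$, $\wt\psi=\psi$ into \eqref{eq:energy-est0} then yields both the unique solvability of \eqref{fractional wave well-posedness} and the estimate \eqref{eq:energy-est}.

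The only substantive ingredient is the elementary mapping property $(-\Delta)^sf\in L^2(0,T;L^2(\Omega))$ for $f\in C_c^\infty(W_T)$; everything else is bookkeeping carried through the change of unknown $v=u-f$, so I do not anticipate any serious obstacle.
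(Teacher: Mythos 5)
Your proposal is correct and follows exactly the route the paper takes: the paper sets up the substitution $v=u-f$ (with $\wt F=F-(-\Delta)^sf$, $\wt\varphi=\varphi$, $\wt\psi=\psi$) just before stating Theorem~\ref{thm:well-posedness} and regards Corollary~\ref{thm:well} as an immediate consequence, which is precisely what you spell out. Your explicit verification that $(-\Delta)^sf\in L^2(0,T;L^2(\Omega))$ is a useful detail the paper leaves implicit, but it does not change the argument.
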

	    
    The proof of Theorem \ref{thm:well-posedness} is similar to the well-posedness of the classical wave equation (i.e., $s=1$) and, for the sake of completeness, we will give a comprehensive proof in Appendix \ref{Appendix}. In this article, we only consider the time-independent potential $q=q(x)\in L^\infty(\Omega)$. In fact, the well-posedness for a space-time dependent potential $q=q(x,t)\in L^\infty(\Omega_{T})$ has been studied. We refer to \cite[Theorem 10.14]{Bre11} for the well-posedness of the abstract wave equations, and to \cite{DFA20nonlocalwave} for the well-posedness result for non-local semi-linear integro-differential wave equations which involve both the fractional Laplacian (in space) and the Caputo fractional derivative operator (in time).  
   
\subsection{The DN map and its duality}
  With the well-posedness at hand, one can define the corresponding DN map \eqref{eq:hyperbolic-DN} for the fractional wave equation \eqref{fractional wave main}. Let us define the solution operator 
  \begin{align}\label{Poisson operator}
  	\mathcal{P}_{q} : C^\infty_c (W_T) \to L^2 (0,T;H^s(\Omega)), \quad f\mapsto u|_{\Omega_T},
  \end{align}
  where $W\subset \Omega_e$ is a Lipschitz set with $\overline{W}\cap \overline{\Omega}=\emptyset$, and $u$ is the solution of \eqref{fractional wave main}. Given any $\varphi(x,t)$ defined in $(\Omega_{{\rm e}})_{T}$, we define 
  \[
  \varphi^\ast (x,t):=\varphi(x,T-t)\quad\text{for all}\quad (x,t)\in(\Omega_{{\rm e}})_{T},
  \]
  and we define the following backward DN-map: 
  \[
  \Lambda^\ast _{q}(f):=(\Lambda_{q}(f))^\ast \quad\text{for any }\,\,f\in C_{c}^{\infty}((\Omega_{{\rm e}})_{T}).
  \]

  \begin{lem}
   Given any $q\in L^{\infty}(\Omega)$, $\Lambda^\ast _{q}$ is self-adjoint, that is, 
  	\begin{equation*}
  		\int_{(\Omega_{e})_{T}}\Lambda^\ast_{q}(f_{1})f_{2}\, dxdt=\int_{(\Omega_{e})_{T}}f_{1}\Lambda^\ast_{q}(f_{2})\, dxdt,\quad\text{ for all }f_{1},f_{2}\in C_{c}^{\infty}((\Omega_{{\rm e}})_{T}).\label{eq:self-adjoint}
  	\end{equation*}
  \end{lem}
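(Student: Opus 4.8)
The plan is to reduce the asserted identity to the symmetry, in the two exterior data, of a space-time bilinear form built from the solution operator; this symmetry is extracted from the weak formulation of the equation together with one integration by parts in time, the reversal $t\mapsto T-t$ being exactly what makes the temporal boundary terms vanish. First I would unwind $\Lambda_q^\ast$: writing $g^\ast(x,t):=g(x,T-t)$ and changing variables $t\mapsto T-t$ in the time integral, the assertion is equivalent to
\begin{equation*}
\int_{(\Omega_e)_T}\Lambda_q(f_1)\,f_2^\ast\,dx\,dt=\int_{(\Omega_e)_T}\Lambda_q(f_2)\,f_1^\ast\,dx\,dt\qquad\text{for all }f_1,f_2\in C_c^\infty((\Omega_e)_T).
\end{equation*}
For $j=1,2$, since $\supp f_j$ lies at positive distance from $\overline\Omega$, Corollary~\ref{thm:well} applies: let $u_j$ be the solution of \eqref{fractional wave main} with exterior data $f_j$ and zero Cauchy data at $t=0$, and put $v_j:=u_j-f_j$. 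Then $t\mapsto v_j(\cdot,t)$ lies in $L^2(0,T;\wt H^s(\Omega))\cap H^1(0,T;L^2(\Omega))$ with $\partial_t^2 v_j\in L^2(0,T;H^{-s}(\Omega))$ and $v_j(\cdot,0)=\partial_t v_j(\cdot,0)=0$; moreover $f_j^\ast=u_j^\ast-v_j^\ast$, where $v_j^\ast(x,t)=v_j(x,T-t)$ is supported in $\overline\Omega$ and satisfies $v_j^\ast=\partial_t v_j^\ast=0$ at $t=T$.

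For the bilinear identity, note that $f_2^\ast(\cdot,t)\in C_c^\infty(\Omega_e)$ vanishes on $\Omega$ and $\Lambda_q(f_1)=(-\Delta)^s u_1|_{(\Omega_e)_T}$, so for a.e.\ $t\in(0,T)$ (because $f_2^\ast(\cdot,t)$, extended by zero, lies in $\wt H^s(\Omega_e)\subset H^s(\R^n)$),
\begin{equation*}
\int_{\Omega_e}\Lambda_q(f_1)(\cdot,t)\,f_2^\ast(\cdot,t)\,dx=\langle(-\Delta)^s u_1(\cdot,t),f_2^\ast(\cdot,t)\rangle_{\R^n}=\int_{\R^n}(-\Delta)^{s/2}u_1(\cdot,t)\,(-\Delta)^{s/2}f_2^\ast(\cdot,t)\,dx .
\end{equation*}
Writing $f_2^\ast=u_2^\ast-v_2^\ast$: in the $v_2^\ast$-part, $v_2^\ast(\cdot,t)\in\wt H^s(\Omega)$ is an admissible test function, so Definition~\ref{def:weak-soln} applied to $v_1$ gives $\langle(-\Delta)^s u_1(\cdot,t),v_2^\ast(\cdot,t)\rangle_{\R^n}=-(\partial_t^2 v_1(\cdot,t),v_2^\ast(\cdot,t))_{L^2(\Omega)}-\int_\Omega q\,v_1(\cdot,t)\,v_2^\ast(\cdot,t)\,dx$. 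Integrating over $t$,
\begin{equation*}
\int_{(\Omega_e)_T}\Lambda_q(f_1)f_2^\ast=\int_0^T\!\!\int_{\R^n}(-\Delta)^{s/2}u_1\,(-\Delta)^{s/2}u_2^\ast\,dx\,dt+\int_0^T\!(\partial_t^2 v_1,v_2^\ast)_{L^2(\Omega)}\,dt+\int_0^T\!\!\int_\Omega q\,v_1 v_2^\ast\,dx\,dt .
\end{equation*}
In the middle term I integrate by parts once in time; since $\partial_t v_1(\cdot,0)=0$ and $v_2^\ast(\cdot,T)=0$, both endpoint contributions vanish and the term equals $-\int_0^T(\partial_t v_1,\partial_t v_2^\ast)_{L^2(\Omega)}\,dt$. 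Finally, in each of the three integrals on the right I apply the change of variables $t\mapsto T-t$; using $u_j^\ast(\cdot,T-t)=u_j(\cdot,t)$, $v_j^\ast(\cdot,T-t)=v_j(\cdot,t)$, $(\partial_t v_j^\ast)(\cdot,T-t)=-(\partial_t v_j)(\cdot,t)$, together with the symmetry of the pairings $\int_{\R^n}(-\Delta)^{s/2}(\cdot)\,(-\Delta)^{s/2}(\cdot)$, $\int_\Omega q(\cdot)(\cdot)$ and $(\cdot,\cdot)_{L^2(\Omega)}$ (the two sign changes from the $\partial_t$'s cancel in the middle term), each integral is turned into the corresponding one with $(u_1,v_1)$ and $(u_2,v_2)$ interchanged. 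Summing, the right-hand side equals $\int_{(\Omega_e)_T}\Lambda_q(f_2)f_1^\ast$, which is the required identity.

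The step I expect to require the most care is the time integration by parts at the present regularity: one needs $\partial_t v_j\in C([0,T];L^2(\Omega))$ so that the endpoint values $\partial_t v_1(\cdot,0)$ and $v_2^\ast(\cdot,T)$ are meaningful, and the identity $\int_0^T\langle\partial_t^2 v_1,v_2^\ast\rangle\,dt=[(\partial_t v_1,v_2^\ast)_{L^2(\Omega)}]_0^T-\int_0^T(\partial_t v_1,\partial_t v_2^\ast)_{L^2(\Omega)}\,dt$ for functions in these anisotropic Sobolev classes; this is standard (density of functions smooth in $t$, or the Lions--Magenes integration-by-parts lemma) but must be invoked with the correct spaces. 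Everything else is bookkeeping with the time-reversal symmetry, together with the self-adjointness of $(-\Delta)^s$ on $H^s(\R^n)$ and of multiplication by the real-valued $q$.
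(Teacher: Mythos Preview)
Your proof is correct and rests on the same ingredients as the paper's: the vanishing of the temporal boundary terms thanks to the zero Cauchy data of $u_1$ at $t=0$ and of $u_2^\ast$ at $t=T$, the self-adjointness of $(-\Delta)^s$ on $H^s(\mathbb{R}^n)$, and the symmetry of multiplication by the real-valued $q$. The paper organizes this as a Lagrangian-type identity (subtracting $u_1\cdot(\text{equation for }u_2^\ast)$ from $(\text{equation for }u_1)\cdot u_2^\ast$ over $\Omega_T$ and then splitting $\int_{\Omega_T}=\int_{(\mathbb{R}^n)_T}-\int_{(\Omega_e)_T}$), whereas you expand $\int_{(\Omega_e)_T}\Lambda_q(f_1)f_2^\ast$ into an explicitly symmetric space-time bilinear form via the weak formulation and then verify invariance under $t\mapsto T-t$; these are two presentations of the same computation. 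Your version has the advantage of staying honestly at the level of weak solutions (the paper's manipulation of $\int_{\Omega_T}u_1\,\partial_t^2 u_2^\ast$ is somewhat formal at the regularity provided by Theorem~\ref{thm:well-posedness}), and your remark that the single time integration by parts is the only step needing care is exactly right.
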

  
  \begin{proof}
  	Let $u_{1}=\mathcal{P}_{q}f_{1}$ and $u_{2}=\mathcal{P}_{q}f_{2}$. Using integration by parts, we have 
  	\begin{equation}
  		\int_{\Omega_{T}}\left[u_{1}(\partial_{t}^{2}u^\ast_{2})-(\partial_{t}^{2}u_{1})u^\ast_{2}\right] dxdt=0.\label{eq:integration-by-parts}
  	\end{equation}
  	Therefore, 
  	\begin{align*}
  		0 & =\int_{\Omega_{T}}\left[u_{1}\LC \partial_{t}^{2}u^\ast_{2}+(-\Delta)^{s}u^\ast_{2}+qu^\ast_{2}\RC-\LC\partial_{t}^{2}u_{1}+(-\Delta)^{s}u_{1}+q(x)u_{1}\RC u^\ast_{2}\right] dxdt\\
  		& =\int_{\Omega_{T}}\left[u_{1}((-\Delta)^{s}u^\ast_{2})-((-\Delta)^{s}u_{1})u^\ast_{2}\right] dxdt\\
  		& =\LC \int_{(\mathbb{R}^{n})_{T}}-\int_{(\Omega_{{\rm e}})_{T}}\RC\left[u_{1}((-\Delta)^{s}u^\ast_{2})-((-\Delta)^{s}u_{1})u^\ast_{2}\right] dxdt\\
  		& =-\int_{(\Omega_{{\rm e}})_{T}}\left[u_{1}((-\Delta)^{s}u^\ast_{2})-((-\Delta)^{s}u_{1})u^\ast_{2}\right] dxdt\\
  		& =-\int_{(\Omega_{{\rm e}})_{T}}\left[f_{1}\Lambda^\ast_{q}(f_{2})-\Lambda_{q}(f_{1})f^\ast_{2}\right]dxdt.
  	\end{align*}
  	Finally, changing the variable $t\mapsto T-t$, we have 
  	\begin{equation}
  		\int_{(\Omega_{{\rm e}})_{T}}f_{1}\Lambda^\ast_{q}(f_{2}) \, dxdt=\int_{(\Omega_{{\rm e}})_{T}}\Lambda_{q}(f_{1})f^\ast_{2}\, dxdt=\int_{(\Omega_{{\rm e}})_{T}}\Lambda^\ast_{q}(f_{1})f_{2}\, dxdt,\label{eq:self-adjoint2}
  	\end{equation}
  	which is our desired lemma. 
  \end{proof}

Since $\Lambda^\ast_{q}$ is self-adjoint, we can derive the following identity immediately.

  \begin{lem}[Integral identity]\label{lem:self-adjoint}
  		  	Let $q_{1},q_{2}\in L^{\infty}(\Omega)$, and given any $f_{1},f_{2}\in  C_{c}^{\infty}((\Omega_{e})_{T})$. Let $u_{1}:=\mathcal{P}_{q_{1}}f_{1}$ and $u_{2}:=\mathcal{P}_{q_{2}}f_{2}$, where the operator $\mathcal{P}_q$ is given in \eqref{Poisson operator}, for $q=q_1$ and $q=q_2$, respectively. Then 
  	\begin{equation}\label{eq:crucial-identity}
  		\int_{\Omega_{T}}(q_{1}-q_{2})u_{1}u^\ast_{2} \, dxdt=\int_{(\Omega_{{\rm e}})_T}((\Lambda_{q_{1}}-\Lambda_{q_{2}})f_{1})f^\ast_{2}\, dxdt.
  	\end{equation}
  \end{lem}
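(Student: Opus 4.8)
The plan is to adapt the integration-by-parts computation from the proof of the preceding self-adjointness lemma, now retaining the difference of the two potentials. Write $u_{1}=\mathcal{P}_{q_{1}}f_{1}$, $u_{2}=\mathcal{P}_{q_{2}}f_{2}$ and $u_{2}^{\ast}(x,t)=u_{2}(x,T-t)$. Since $q_{2}=q_{2}(x)$ is time-independent, the time reversal $u_{2}^{\ast}$ solves $\LC\partial_{t}^{2}+(-\Delta)^{s}+q_{2}\RC u_{2}^{\ast}=0$ in $\Omega_{T}$, with $u_{2}^{\ast}=f_{2}^{\ast}$ in $(\Omega_{e})_{T}$ and $u_{2}^{\ast}=\partial_{t}u_{2}^{\ast}=0$ on $\R^{n}\times\{T\}$ (the time reversal of the initial conditions of $u_{2}$).

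First I would note that, because $\LC\partial_{t}^{2}+(-\Delta)^{s}+q_{1}\RC u_{1}=0$ in $\Omega_{T}$, one has $\LC\partial_{t}^{2}+(-\Delta)^{s}+q_{2}\RC u_{1}=(q_{2}-q_{1})u_{1}$, so that
\[
\int_{\Omega_{T}}(q_{1}-q_{2})u_{1}u_{2}^{\ast}\,dxdt=\int_{\Omega_{T}}\LB u_{1}\LC\partial_{t}^{2}+(-\Delta)^{s}+q_{2}\RC u_{2}^{\ast}-\LC\LC\partial_{t}^{2}+(-\Delta)^{s}+q_{2}\RC u_{1}\RC u_{2}^{\ast}\RB dxdt,
\]
where the first term in the bracket vanishes. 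The zeroth-order contributions $q_{2}u_{1}u_{2}^{\ast}$ cancel, and the right-hand side splits as a time part $\int_{\Omega_{T}}\LB u_{1}\partial_{t}^{2}u_{2}^{\ast}-(\partial_{t}^{2}u_{1})u_{2}^{\ast}\RB dxdt$ plus a fractional part $\int_{\Omega_{T}}\LB u_{1}(-\Delta)^{s}u_{2}^{\ast}-((-\Delta)^{s}u_{1})u_{2}^{\ast}\RB dxdt$. The time part vanishes exactly as in \eqref{eq:integration-by-parts}: integrating by parts in $t$ produces only boundary contributions at $t=0$ and $t=T$, and these are killed by $u_{1}=\partial_{t}u_{1}=0$ at $t=0$ and $u_{2}^{\ast}=\partial_{t}u_{2}^{\ast}=0$ at $t=T$.

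For the fractional part I would use the symmetry of $(-\Delta)^{s}$: extending the integral to $(\R^{n})_{T}$ makes it vanish, since $\int_{\R^{n}}u_{1}(-\Delta)^{s}u_{2}^{\ast}\,dx=\int_{\R^{n}}(-\Delta)^{s/2}u_{1}\,(-\Delta)^{s/2}u_{2}^{\ast}\,dx=\int_{\R^{n}}((-\Delta)^{s}u_{1})u_{2}^{\ast}\,dx$ for a.e.\ $t$ (both $u_{1}(\cdot,t)$ and $u_{2}^{\ast}(\cdot,t)$ lie in $H^{s}(\R^{n})$ by Corollary~\ref{thm:well}). Hence the fractional part equals $-\int_{(\Omega_{e})_{T}}\LB u_{1}(-\Delta)^{s}u_{2}^{\ast}-((-\Delta)^{s}u_{1})u_{2}^{\ast}\RB dxdt$. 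On $(\Omega_{e})_{T}$ we have $u_{1}=f_{1}$, $u_{2}^{\ast}=f_{2}^{\ast}$, $(-\Delta)^{s}u_{1}|_{(\Omega_{e})_{T}}=\Lambda_{q_{1}}(f_{1})$ and $(-\Delta)^{s}u_{2}^{\ast}|_{(\Omega_{e})_{T}}=(\Lambda_{q_{2}}(f_{2}))^{\ast}=\Lambda_{q_{2}}^{\ast}(f_{2})$, so the fractional part becomes $-\int_{(\Omega_{e})_{T}}\LB f_{1}\Lambda_{q_{2}}^{\ast}(f_{2})-(\Lambda_{q_{1}}(f_{1}))f_{2}^{\ast}\RB dxdt$. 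Applying \eqref{eq:self-adjoint2} with $q=q_{2}$ to rewrite $\int_{(\Omega_{e})_{T}}f_{1}\Lambda_{q_{2}}^{\ast}(f_{2})\,dxdt=\int_{(\Omega_{e})_{T}}(\Lambda_{q_{2}}(f_{1}))f_{2}^{\ast}\,dxdt$ then collapses everything to $\int_{(\Omega_{e})_{T}}((\Lambda_{q_{1}}-\Lambda_{q_{2}})f_{1})f_{2}^{\ast}\,dxdt$, which is the asserted identity \eqref{eq:crucial-identity}.

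The only point requiring care is that these manipulations are formal at the regularity of weak solutions: with $u_{j}\in L^{2}(0,T;\wt H^{s}(\Omega))\cap H^{1}(0,T;L^{2}(\Omega))$ and $u_{j}''\in L^{2}(0,T;H^{-s}(\Omega))$, the products $u_{1}\,\partial_{t}^{2}u_{2}^{\ast}$ and $u_{1}\,(-\Delta)^{s}u_{2}^{\ast}$ must be read as duality pairings in $H^{-s}(\Omega)\times\wt H^{s}(\Omega)$ (respectively $H^{-s}(\R^{n})\times H^{s}(\R^{n})$), and $\Lambda_{q}f=(-\Delta)^{s}u|_{(\Omega_{e})_{T}}$ lives in $L^{2}(0,T;H^{-s}(\Omega_{e}))$. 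The rigorous justification is the standard density argument — establish the identities first for the Galerkin approximations used in the proof of Theorem~\ref{thm:well-posedness} (or for smooth data) and pass to the limit using the energy estimate \eqref{eq:energy-est} — which is the same bookkeeping already implicit in the proof of the preceding lemma, so I would only remark on it rather than write it out. I expect this density/approximation step, not the algebra, to be the part demanding the most care.
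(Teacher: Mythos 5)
Your proposal is correct and follows essentially the same route as the paper: use the two equations to rewrite the potential difference, cancel the time-derivative terms via \eqref{eq:integration-by-parts}, extend the $(-\Delta)^{s}$ pairing to $(\R^{n})_{T}$ by symmetry so that only the $(\Omega_{e})_{T}$ contribution survives, identify the boundary traces with $\Lambda_{q_{1}}$ and $\Lambda_{q_{2}}^{\ast}$, and finish with the self-adjointness identity \eqref{eq:self-adjoint2}. The only cosmetic difference is that you package the first step as inserting the vanishing term $u_{1}\LC\partial_{t}^{2}+(-\Delta)^{s}+q_{2}\RC u_{2}^{\ast}$, whereas the paper substitutes $q_{j}u_{j}$ from each equation directly; the algebra is identical, and your added caveat about reading the products as duality pairings is a reasonable remark that the paper leaves implicit.
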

  
  \begin{proof}
  	Using \eqref{eq:integration-by-parts}, we have 
  	\begin{align*}
  		& \int_{\Omega_{T}}(q_{1}-q_{2})u_{1}u^\ast_{2} \, dxdt\\
  		 =&\int_{\Omega_{T}}\left[q_{1}u_{1}u^\ast_{2}-u_{1}(q_{2}u^\ast_{2})\right] dxdt\\
  		 =&-\int_{\Omega_{T}}\left[(\partial_{t}^{2}u_{1}+(-\Delta)^{s}u_{1})u_2^\ast-u_{1}(\partial_{t}^{2}u^\ast_{2}+(-\Delta)^{s}u^\ast_{2})\right] dxdt\\
  		=&-\int_{\Omega_{T}}\left[((-\Delta)^{s}u_{1})u^\ast_{2}-u_{1}((-\Delta)^{s}u^\ast_{2})\right] dxdt\\
  		 =&\LC \int_{(\Omega_{e})_{T}}-\int_{(\mathbb{R}^{n})_{T}}\RC \left[((-\Delta)^{s}u_{1})u^\ast_{2}-u_{1}((-\Delta)^{s}u^\ast_{2})\right] dxdt\\
  		=&\int_{(\Omega_{e})_{T}}\left[((-\Delta)^{s}u_{1})u^\ast_{2}-u_{1}((-\Delta)^{s}u^\ast_{2})\right] dxdt\\
  		 =&\int_{(\Omega_{e})_{T}}\left[\Lambda_{q_{1}}(f_{1})f^\ast_{2}-f_{1}\Lambda^\ast_{q_{2}}(f_{2})\right] dxdt.
  	\end{align*}
  	Combining with \eqref{eq:self-adjoint2}, we obtain
  	\[
  	\int_{\Omega_{T}}(q_{1}-q_{2})u_{1}u^\ast_{2}\, dxdt=\int_{(\Omega_{{\rm e}})_{T}}\left[(\Lambda_{q_{1}}f_{1})f^\ast_{2}-(\Lambda_{q_{2}}f_{1})f^\ast_{2}\right] dxdt,
  	\]
  	which is our desired lemma. 
  \end{proof}

	\section{Global uniqueness for the fractional wave equation\label{sec:uniqueness}}

	In this section, let us state and prove a qualitative Runge type approximation for the fractional wave equation, and then prove Theorem \ref{thm:global uniqueness}.
	Before further discussion, let us comment on the speeds of propagation of the local and nonlocal wave equations. Given $V=V(x)\in L^\infty (\R^n)$, let $u$ be a solution of 
	$$
	\LC \p_t^2  -\Delta +V\RC u =0 \text{ in } \R^n\times (0,\infty).
	$$ 
	It is known that if $u(x,0)=\phi(x)$, for $x\in \R^n$, such that $\phi \not \equiv 0$ and $\phi$ is compactly supported, then for every $t>0$, the solution $u(\cdot, t)$ has compact support. 
	
	On the other hand, the speed of propagation for the fractional wave equation is infinite due to the nonlocal nature of the fractional Laplacian. To prove this rigorously, let us recall the strong uniqueness property for the fractional Laplacian. Given $0<s<1$, $r\in \R$, if $u\in H^{r}(\R^n)$ satisfies $u=(-\Delta)^ s u =0$ in any nonempty open subset of $\R^n$, then $u\equiv 0$ in $\R^n$. By this property, we can prove the following lemma. 
	
\begin{lem}
	Given $V=V(x)\in L^\infty (\R^n)$, let $u$ be a solution of 
	\begin{equation}\label{eq:nonlocal-wave}
		\LC \partial_{t}^{2}+(-\Delta)^{s}+V\RC u=0\quad\text{ in }\;\mathbb{R}^{n}\times(0,T),
	\end{equation}
then $u$ does not have a finite speed of propagation.
\end{lem}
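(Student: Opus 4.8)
The plan is to argue by contradiction, feeding a finite-speed hypothesis back into the equation and then invoking the strong uniqueness property for $\Ds$ recalled just above the lemma. Here $u$ is to be read as a nontrivial solution of \eqref{eq:nonlocal-wave} whose Cauchy data $u(\cdot,0)=\varphi$, $\p_t u(\cdot,0)=\psi$ are compactly supported — the natural setting in which ``speed of propagation'' is meaningful, paralleling the local statement discussed above — and ``$u$ has a finite speed of propagation'' is taken to mean that there is a finite constant $c>0$ with
\[
\supp u(\cdot,t)\subset\{x\in\R^n:\dist(x,\supp\varphi\cup\supp\psi)\le ct\}\qquad\text{for all }t\in(0,T).
\]
So I would assume such a $c$ exists and aim to deduce $u\equiv 0$, contradicting nontriviality.

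First I would pick $\rho>0$ with $\supp\varphi\cup\supp\psi\subset B_\rho$ and set $U:=\R^n\setminus\overline{B_{\rho+1}}$, $\delta:=1/c$; the propagation bound then gives $u\equiv 0$ on $U\times(0,\delta)$. Next, I would restrict \eqref{eq:nonlocal-wave} to $U\times(0,\delta)$: there $\p_t^2 u=0$ and $Vu=0$, so the PDE forces $\Ds u=0$ on $U\times(0,\delta)$ in the sense of distributions. Slicing in $t$, for a.e.\ $t\in(0,\delta)$ the function $u(\cdot,t)$ belongs to $\wt H^s(\R^n)\subset H^s(\R^n)$ (by the regularity of weak solutions, cf.\ Theorem~\ref{thm:well-posedness} and its whole-space analogue, proved by the same energy method) and satisfies $u(\cdot,t)=0$ and $\Ds u(\cdot,t)=0$ in the nonempty open set $U$. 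The strong uniqueness property for $\Ds$ then yields $u(\cdot,t)\equiv 0$ in $\R^n$ for a.e.\ $t\in(0,\delta)$, i.e.\ $u\equiv 0$ on $\R^n\times(0,\delta)$. Finally, since $t\mapsto u(\cdot,t)$ and $t\mapsto\p_t u(\cdot,t)$ are continuous (into $L^2(\R^n)$ and $H^{-s}(\R^n)$, again from the energy estimate), letting $t\to0^+$ forces $\varphi=\psi=0$, which contradicts that $u$ is nontrivial. Hence $u$ cannot have a finite speed of propagation.

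I do not expect a genuine obstacle here; the only points needing care are bookkeeping. One must fix the precise function class in which $u$ is a weak solution, so that the hypotheses of the fractional UCP (namely $u(\cdot,t)\in H^r(\R^n)$ for some $r$ and $\Ds u(\cdot,t)=0$ on an open set) are literally met for a.e.\ $t\in(0,\delta)$; and one should include a line justifying ``$\Ds u(\cdot,t)=0$ in $U$'', which is immediate from restricting \eqref{eq:nonlocal-wave} to $U\times(0,\delta)$ but deserves comment because the nonlocality of $\Ds$ makes it tempting to (wrongly) object — the point being that no pointwise singular-integral formula is used, only the distributional identity on $U\times(0,\delta)$. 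The whole-space well-posedness and the continuity of $u$ and $\p_t u$ in $t$ are not recorded in the excerpt, but follow verbatim from the proof of Theorem~\ref{thm:well-posedness}.
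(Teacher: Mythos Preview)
Your proposal is correct and follows essentially the same contradiction-plus-strong-uniqueness argument as the paper. The only difference is cosmetic: rather than restricting to a short time interval $(0,\delta)$ and then using continuity in $t$ to push back to the initial data, the paper simply takes $\Omega$ large enough (e.g.\ $\Omega=B_{\rho+cT}$) so that $u=0$ on $(\Omega_e)_T$ for the \emph{entire} interval $(0,T)$, which gives $u\equiv 0$ on $\R^n\times(0,T)$ directly and makes your continuity step unnecessary.
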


\begin{proof}
Suppose the contrary, that the speed of propagation of \eqref{eq:nonlocal-wave} is finite. If we choose $u(x,0)=\phi(x)$ for some $0\not\equiv\phi\in C_{c}^{\infty}(\mathbb{R}^{n})$,
given any $T>0$, there exists a bounded set $\Omega$ such that 
\begin{equation*}
	u=0\quad\text{ in }(\Omega_{e})_{T},\label{eq:speed1}
\end{equation*}
therefore, $\partial_{t}^{2}u=0$ in $(\Omega_{e})_{T}$. Using \eqref{eq:nonlocal-wave},
we also have 
\begin{equation*}
	(-\Delta)^{s}u=0\quad\text{ in }(\Omega_{e})_{T}.\label{eq:speed2}
\end{equation*}
Using the strong uniqueness for the fractional Laplacian, we conclude that $u\equiv0$,
which implies $\phi\equiv0$, this is a contradiction. 
\end{proof}

	\subsection{Qualitative Runge approximation}
	The qualitative approximation property is based on the strong uniqueness for the fractional Laplacian (\cite[Theorem 1.2]{ghosh2016calder}).
	\begin{thm}[Qualitative Runge approximation]\label{thm:Runge}
	 Let $\Omega\subset \R^n$ be a bounded Lipschitz domain for $n\in \N$, and $W\subset\Omega_e$ be an open set with Lipschitz boundary satisfying $\overline{W}\cap \overline{\Omega}=\emptyset$. For $s\in (0,1)$, let $\mathcal{P}_{q}$ be the solution operator given by \eqref{Poisson operator}, and define 
	 \begin{align*}
	 	\mathcal{D}:=\left\{ u|_{\Omega_T} : \  u=\mathcal{P}_{q} f, \ f\in C^\infty_c (W_T) \right\}.
	 \end{align*}
    Then $\mathcal{D}$ is dense in $L^2(\Omega_T)$.
	\end{thm}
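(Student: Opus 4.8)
The plan is a duality (Hahn--Banach) argument that turns the density of $\mathcal{D}$ into a unique continuation statement for the fractional Laplacian, relying on the well-posedness of an adjoint problem and the strong uniqueness property recalled above.

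Since $\mathcal{D}$ is a linear subspace of $L^{2}(\Omega_{T})$, it suffices to show that any $g\in L^{2}(\Omega_{T})$ with $(g,u)_{L^{2}(\Omega_{T})}=0$ for every $u=\mathcal{P}_{q}f$, $f\in C_{c}^{\infty}(W_{T})$, must vanish. To produce an adjoint solution, I would solve the backward problem
\begin{equation*}
\begin{cases}
\LC \p_{t}^{2}+\Ds+q\RC w=g & \text{in }\Omega_{T},\\
w=0 & \text{in }(\Omega_{e})_{T},\\
w=\p_{t}w=0 & \text{in }\R^{n}\times\{T\}.
\end{cases}
\end{equation*}
Because the equation carries no first-order time derivative, the change of variables $t\mapsto T-t$ reduces this to \eqref{fractional wave zero exterior} with source $g^{\ast}$ and zero initial data, so Theorem~\ref{thm:well-posedness} (equivalently, the construction behind the backward DN-map $\Lambda_{q}^{\ast}$) gives a unique weak solution $w$ with $w\in L^{2}(0,T;\wt H^{s}(\Omega))$, $\p_{t}w\in L^{2}(0,T;L^{2}(\Omega))$ and $\p_{t}^{2}w\in L^{2}(0,T;H^{-s}(\Omega))$.

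Next I would pair the equation for $w$ against $u=\mathcal{P}_{q}f$ and integrate by parts, following the computations in Lemma~\ref{lem:self-adjoint} and in the proof of Theorem~\ref{thm:well-posedness}. Integrating by parts twice in $t$, all time-boundary contributions vanish since $u=\p_{t}u=0$ at $t=0$ while $w=\p_{t}w=0$ at $t=T$; moving $\Ds$ onto $u$ (it is self-adjoint on $\R^{n}$, and $w(\cdot,t)$ is supported in $\overline{\Omega}$), using $(\p_{t}^{2}+\Ds+q)u=0$ in $\Omega_{T}$ together with the exterior conditions $u=f$ and $w=0$ on $(\Omega_{e})_{T}$, one is left with
\[
0=(g,u)_{L^{2}(\Omega_{T})}=-\int_{W_{T}}f\,(\Ds w)\,dx\,dt .
\]
Since $f$ ranges over all of $C_{c}^{\infty}(W_{T})$, this forces $\Ds w=0$ in $W_{T}$ in the distributional sense. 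A Fubini-type argument (test against products $\varphi(x)\chi(t)$ and use separability to discard the $t$-null set) then shows that for a.e.\ $t\in(0,T)$ the slice $w(\cdot,t)\in H^{s}(\R^{n})$ satisfies both $w(\cdot,t)=0$ in $W$ (indeed $w(\cdot,t)\in\wt H^{s}(\Omega)$, hence is supported in $\overline\Omega$, and $W\subset\Omega_{e}$) and $\Ds w(\cdot,t)=0$ in $W$. The strong uniqueness property for $\Ds$ then yields $w(\cdot,t)\equiv0$ in $\R^{n}$ for a.e.\ $t$, so $w\equiv0$ on $\R^{n}\times(0,T)$, and therefore $g=(\p_{t}^{2}+\Ds+q)w=0$ in $\Omega_{T}$, as desired.

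The main obstacle is the rigorous justification of the integration by parts at the regularity of weak solutions: the second time derivatives live only in $L^{2}(0,T;H^{-s}(\Omega))$, the operator $\Ds$ must be interpreted through the $\wt H^{s}(\Omega)$--$H^{-s}(\Omega)$ duality, and one must make sense of the traces $w(\cdot,T)$, $\p_{t}w(\cdot,T)$ (which are provided by the Galerkin scheme underlying Theorem~\ref{thm:well-posedness}). This is handled exactly as in the derivation of the integral identity \eqref{eq:crucial-identity}, by density of smooth functions in the relevant energy spaces. The remaining ingredients --- the Hahn--Banach reduction, the Fubini argument, and the appeal to strong uniqueness --- are routine.
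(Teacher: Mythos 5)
Your proposal is correct and follows essentially the same route as the paper: a Hahn--Banach reduction, pairing against the solution of the backward adjoint equation with source $g$, integration by parts to obtain $(-\Delta)^s w = 0$ in $W_T$, and then the strong unique continuation of $(-\Delta)^s$ applied slicewise in $t$. The extra remarks you add (that $w(\cdot,t)=0$ in $W$ comes for free from $w\in\wt H^s(\Omega)$, the Fubini argument for the a.e.\ $t$ null set, and the regularity caveats for justifying the integration by parts) are careful elaborations of the same argument rather than a different one.
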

	
	\begin{rmk}
		The Runge approximation plays an essential role in the study of fractional inverse problems, for example, see \cite{ghosh2016calder,ghosh2017calder,RS20Calderon,cekic2020calderon} and references therein.
	\end{rmk}
	
	\begin{proof}[Proof of Theorem~{\rm \ref{thm:Runge}}]
		By using the Hahn-Banach theorem and the duality arguments, it suffices to show that if $v\in L^2 (\Omega_T)$, which satisfies 
		\begin{align}\label{orthogonal condition in Runge}
			\LC \mathcal{P}_{q} f , v\RC _{L^2(\Omega_T)}=0, \quad \text{ for any }f\in C_{c}^{\infty}(W_T),
		\end{align}
	then $v\equiv 0 $ in $\Omega_T$. Now, consider the adjoint wave equation 
	\begin{align}\label{adjoint fractional wave equation}
		\begin{cases}
			\LC \p_t^2 +(-\Delta)^s+ q \RC w= v &\text{ in }\;\Omega_T, \\
			w=0 &\text{ in }\;(\Omega_e)_T,\\
			w=\p_t w=0 &\text{ in }\; \R^n\times \{T\}.
		\end{cases}
	\end{align}
    Similar to the proof of Theorem \ref{thm:well-posedness}, it is easy to see that \eqref{adjoint fractional wave equation} is well-posed.

    For $f\in C_{c}^{\infty}(W_T)$, let $u$ and $w$ be the solutions of \eqref{fractional wave main} and \eqref{adjoint fractional wave equation}, respectively. Note that $u-f$ is only supported in $\overline{\Omega}_T$, then we have 
    \begin{align}\label{Runge computation}
    	\begin{split}
    	  \LC \mathcal{P}_{q} f, v \RC_{L^2(\Omega_T)}= &\LC u-f, (-\p_t^2 +(-\Delta)^s +q)w \RC_{L^2(\Omega_T)} \\
    	  =& -\LC f, (-\Delta)^s w \RC _{L^2(W_T)},
    	\end{split}
    \end{align}
where we have used $u$ is the solution of \eqref{fractional wave main}, 
$
u(x,0)=\p_t u(x,0)=0 $ and $w(x,T)=\p_t w(x,T)=0$ for $x\in \R^n$ in last equality of \eqref{Runge computation} . By using the conditions \eqref{orthogonal condition in Runge} and \eqref{Runge computation}, one must have $\LC f, (-\Delta)^s w \RC _{L^2(W_T)}=0$, for any $f\in  C_{c}^{\infty}(W_T)$, which implies that 
\begin{align*}
	w = (-\Delta)^s w=0 \text{ in }W_T.
\end{align*}
Fix any fixed $t\in(0,T)$, the strong uniqueness for the fractional Laplacian (see \cite[Theorem 1.2]{ghosh2016calder}) yields that $w (\cdot,t)=0$ in $\R^n\times \{t\}$, for all $t\in (0,T)$. Therefore, we derive $v=0$ as desired, and the Hahn-Banach theorem infers the density property. This proves the assertion.
	\end{proof}
	
	\begin{rmk}
		By using similar arguments, one can also consider the well-posedness (Theorem~{\rm \ref{thm:well-posedness}}) 
		and the Runge approximation (Theorem~{\rm \ref{thm:Runge}}) also hold for the case $q=q(x,t)\in L^\infty(\Omega_T)$. In this work, we are only interested in time-independent potentials $q=q(x)$.
	\end{rmk}

	\begin{rmk}
		For other unique continuation property for the fractional elliptic operators, we refer the reader to \cite{FF14unique,GR19unique,Rul15unique,Yu17unique} and references therein. 
	\end{rmk}

\subsection{Proof of Theorem \ref{thm:global uniqueness}}

With the help of Lemma~\ref{lem:self-adjoint} and Theorem~\ref{thm:Runge}, we can prove the global uniqueness of the inverse problem for the fractional wave equation.

\begin{proof}[Proof of Theorerm~{\rm \ref{thm:global uniqueness}}]
Given any $g\in L^{2}(\Omega_{T})$, using Theorem~\ref{thm:Runge}, there exists a sequence $f_{1,k}\in C_{c}^{\infty}((W_{1})_{T})$ such that 
\[
\lim_{k\rightarrow\infty}\left\|u_{1,k}-g\right\|_{L^{2}((0,T)\times\Omega)}=0,\quad\text{where}\quad u_{1,k}=\mathcal{P}_{q}f_{1,k}.
\]
Since $1\in L^{2}(\Omega_{T})$, similarly, we can choose a sequence $f_{2,k}\in C_{c}^{\infty}((W_{2})_{T})$ such that 
\[
\lim_{k\rightarrow\infty}\left\|u^\ast_{2,k}-1\right\|_{L^{2}((0,T)\times\Omega)}=0,\quad\text{where}\quad u_{2,k}=\mathcal{P}_{q}f_{2,k}.
\]
Combining \eqref{eq:assump} and \eqref{eq:crucial-identity}, we know that 
\[
\int_{\Omega_{T}}(q_{1}-q_{2})u_{1,k}u^\ast_{2,k} \, dxdt=0.
\]
Taking the limit $k\rightarrow\infty$, we obtain 
\[
\int_{\Omega_{T}}(q_{1}-q_{2})g \, dxdt=0.
\]
Finally, by the arbitrariness of $g\in L^{2}(\Omega_{T})$, we conclude that $q_{1}= q_{2}$ in $\Omega_T$. 
\end{proof}

\section{Stability for the fractional wave equation\label{sec:stability}}

In order to understand the stability estimate for the fractional wave equation, let us recall the famous \emph{Caffarelli-Silvestre extension} \cite{caffarelli2007extension} for the fractional Laplacian. 
For each $x'\in\mathbb{R}^{n}$ and $x_{n+1}\in\mathbb{R}_{+}^{n+1}$,
we denote $x=(x',x_{n+1})\in\mathbb{R}^{n}\times\mathbb{R}_{+}=\mathbb{R}_{+}^{n+1}$.
Fixing any $0<s<1$ and $t\in(0,T)$. If there exists $\gamma\in\mathbb{R}$
such that $\bm{v}(t)=v(x',t)\in H^{\gamma}(\mathbb{R}^{n})$, using
\cite[Lemma 4.1]{RS20Calderon}, there exists a \textit{Caffarelli-Silvestre
	extension} $\bm{v}^{{\rm cs}}(t)=v^{{\rm cs}}(x',x_{n+1},t)\in C^{\infty}(\mathbb{R}_{+}^{n+1})$
of $v$ satisfies 
\begin{subequations}
	\begin{align*}
		\nabla\cdot x_{n+1}^{1-2s}\nabla v^{{\rm cs}} & =0\quad\text{ in }\mathbb{R}_{+}^{n+1},\\
		v^{{\rm cs}} & =v\quad\text{ on }\mathbb{R}^{n}\times\{0\},\\
		\lim_{x_{n+1}\rightarrow0}x_{n+1}^{1-2s}\partial_{n+1}v^{{\rm cs}} & =-a_{n,s}(-\Delta)^{s}v,
	\end{align*}
\end{subequations}
where $a_{n,s}:=2^{1-2s}\frac{\Gamma(1-s)}{\Gamma(s)}$ and $\nabla = (\nabla_{x'},\p_{x_{n+1}})=(\nabla ', \p_{n+1})$.


\subsection{Logarithmic stability of the Caffarelli-Silvestre extension}

We now define 
\[
\hat{\Omega}:=\left\{x\in\mathbb{R}^{n}\times\{0\}: \, {\rm dist}\,(x,\Omega)<\frac{1}{2}{\rm dist}\,(\Omega,W)\right\}.
\]
We now prove a lemma, which concerns the \emph{propagation of smallness} for the Caffarelli-Silvestre extension.  By using similar ideas as in \cite[Section 5]{RS20Calderon}, we can derive the following boundary logarithmic stability estimate.

\begin{lem}\label{lem:small1}
	Let $W\subset\Omega_{e}$ be
	an open bounded Lipschitz set such that $\overline{W}\cap\overline{\Omega}=\emptyset$.
	Let $v^{{\rm cs}}(x',x_{n+1},t)$ be the Caffarelli-Silvestre extension
	of $v(x',t)$. Define
	\[
	\eta(t):=\left\|\lim_{x_{n+1}\rightarrow0}x_{n+1}^{1-2s}\partial_{n+1}\bm{v}^{{\rm cs}}(t)\right\|_{H^{-s}(W)}=a_{s}\|(-\Delta)^{s}\bm{v}(t)\|_{H^{-s}(W)}
	\]
	Suppose that there exist constants $C_{1}>1$ and $E>0$
	such that $\eta(t)\le E$ and 
	\begin{equation}
		\left\|x_{n+1}^{\frac{1-2s}{2}}v^{{\rm cs}}\right\|_{L^{\infty}(0,T;L^{2}(\mathbb{R}^{n}\times[0,C_{1}]))}+\left\|x_{n+1}^{\frac{1-2s}{2}}\nabla v^{{\rm cs}}\right\|_{L^{\infty}(0,T;L^{2}(\mathbb{R}_{+}^{n+1}))}\le E,\label{eq:apriori1b}
	\end{equation}
	then 
	\begin{equation}
		\left\|x_{n+1}^{\frac{1-2s}{2}}\bm{v}^{{\rm cs}}(t)\right\|_{L^{2}(\hat{\Omega}\times[0,1])}\le CE\,\log^{-\mu}\LC\frac{CE}{\eta(t)}\RC\label{eq:stability1}
	\end{equation}
	for some constants $C>1$ and $\mu>0$, both depending only on $n,s,C_{1},\Omega,W$.
	Moreover, given any $\gamma>0$, we have 
	\begin{equation}
		\left\|x_{n+1}^{\frac{1-2s}{2}+\gamma}\nabla\bm{v}^{{\rm cs}}(t)\right\|_{L^{2}(\hat{\Omega}\times[0,1])}\le CE\,\log^{-\mu}\LC\frac{CE}{\eta(t)}\RC,\label{eq:stability2}
	\end{equation}
	for some constants $C>1$ and $\mu>0$, both depending only on $n,s,C_{1},\Omega,W$,
	as well as $\gamma$. 
\end{lem}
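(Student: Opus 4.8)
The plan is to obtain both estimates from a propagation-of-smallness argument for the degenerate (Caffarelli-Silvestre) operator, following \cite[Section~5]{RS20Calderon}, carried out for a frozen time. Fix $t\in(0,T)$ and set $V:=\bm{v}^{{\rm cs}}(t)$, which is a smooth solution of $\nabla\cdot(x_{n+1}^{1-2s}\nabla V)=0$ in $\mathbb{R}^{n+1}_{+}$. Since the exterior data vanishes on $W$, the Dirichlet trace $V|_{W\times\{0\}}=\bm{v}(t)|_{W}$ vanishes, whereas the weighted conormal trace $\lim_{x_{n+1}\to0}x_{n+1}^{1-2s}\partial_{n+1}V$ has $H^{-s}(W)$-norm $\eta(t)\le E$; in addition \eqref{eq:apriori1b} yields, for this $t$, $\|x_{n+1}^{\frac{1-2s}{2}}V\|_{L^{2}(\mathbb{R}^{n}\times[0,C_{1}])}+\|x_{n+1}^{\frac{1-2s}{2}}\nabla V\|_{L^{2}(\mathbb{R}^{n+1}_{+})}\le E$. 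Thus the whole argument is a statement about the single function $V$, and because the bounds in \eqref{eq:apriori1b} are $L^{\infty}$ in $t$ the constants $C,\mu$ produced below are independent of $t$, so \eqref{eq:stability1} and \eqref{eq:stability2} follow pointwise in $t$.

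Step 1 (proof of \eqref{eq:stability1}) is the core. First I would apply a boundary Carleman estimate for $\nabla\cdot(x_{n+1}^{1-2s}\nabla\cdot)$ near a point of $W\times\{0\}$; combined with the vanishing Dirichlet trace and the $\eta(t)$-small weighted conormal trace on $W$, it converts the small Cauchy data into a quantitative bound on $\|x_{n+1}^{\frac{1-2s}{2}}V\|_{L^{2}}$ over a half-ball at that point. Next I would propagate this smallness through $\mathbb{R}^{n+1}_{+}$ by chaining the interior three-ball (quantitative unique continuation) inequality for the degenerate operator along a path of overlapping balls joining that half-ball to a ball sitting above $\hat\Omega$. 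Since the path must approach the part of $\partial\mathbb{R}^{n+1}_{+}$ lying over $\overline{\Omega}$, where no Cauchy data is available, the balls must shrink; truncating the chain at a finite level gives a H\"older-type bound whose exponent degenerates as the level grows, together with a remainder near $\overline{\Omega}\times\{0\}$ controlled only by $E$, and balancing these by an appropriate choice of level turns the estimate into the logarithmic modulus. Finally a boundary-adapted three-ball (or interpolation) inequality for the degenerate operator at the flat face over $\hat\Omega$ lets me descend from the interior ball to the full cylinder $\hat\Omega\times[0,1]$ with the same modulus, giving \eqref{eq:stability1} with $C>1$, $\mu>0$ depending only on $n,s,C_{1},\Omega,W$.

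For \eqref{eq:stability2}, fix $\gamma>0$ and $\delta\in(0,1)$ and split $\hat\Omega\times[0,1]=(\hat\Omega\times[0,\delta])\cup(\hat\Omega\times[\delta,1])$. On the first piece $x_{n+1}^{2\gamma}\le\delta^{2\gamma}$, so the a priori bound gives $\int_{\hat\Omega\times[0,\delta]}x_{n+1}^{1-2s+2\gamma}|\nabla V|^{2}\le\delta^{2\gamma}\int_{\mathbb{R}^{n+1}_{+}}x_{n+1}^{1-2s}|\nabla V|^{2}\le\delta^{2\gamma}E^{2}$. On the second piece the weight is non-degenerate, and a weighted Caccioppoli inequality for $\nabla\cdot(x_{n+1}^{1-2s}\nabla V)=0$ — with a cut-off equal to $1$ on $\hat\Omega\times[\delta,1]$, supported in a slightly larger set $\hat\Omega'\times[\delta/2,2]$ with $\hat\Omega'\supset\hat\Omega$ still disjoint from $W$, and of gradient $O(\delta^{-1})$ — bounds $\int_{\hat\Omega\times[\delta,1]}x_{n+1}^{1-2s+2\gamma}|\nabla V|^{2}$ by $C\delta^{-2}\int_{\hat\Omega'\times[0,2]}x_{n+1}^{1-2s}|V|^{2}$, which by Step 1 applied to $\hat\Omega'$ is $\le C\delta^{-2}\bigl(CE\log^{-\mu}(CE/\eta(t))\bigr)^{2}$. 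Adding the two pieces and choosing $\delta$ to balance $\delta^{2\gamma}E^{2}$ against $\delta^{-2}\bigl(CE\log^{-\mu}(CE/\eta(t))\bigr)^{2}$ yields \eqref{eq:stability2} with $\mu$ replaced by $\mu\gamma/(\gamma+1)>0$, now depending on $\gamma$ as well.

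The hard part will be Step 1: one needs Carleman estimates for $\nabla\cdot(x_{n+1}^{1-2s}\nabla\cdot)$ — an interior version giving the three-ball inequality and a boundary version with the right weighted boundary terms, so that vanishing Dirichlet data together with a small \emph{weighted} conormal derivative really transfers to smallness of $V$ — and one must control the H\"older exponents through the chain of balls and carry out the truncation so that $C$ and $\mu$ depend only on $n,s,C_{1},\Omega,W$ and not on $t$. These ingredients are precisely those of \cite[Section~5]{RS20Calderon} (resting on R\"uland's Carleman estimates for the Caffarelli-Silvestre extension), so the task is their adaptation and careful bookkeeping rather than new machinery; the remaining steps — the reduction to frozen $t$, the Caccioppoli estimate, and the $\delta$-optimization — are routine.
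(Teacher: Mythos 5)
Your proposal is correct and follows essentially the same route as the paper: \eqref{eq:stability1} is read off directly from \cite[Theorem~5.1,~(5.3)]{RS20Calderon} (your Step~1 simply narrates the internal propagation-of-smallness machinery of that theorem rather than citing it), and your proof of \eqref{eq:stability2} --- splitting $\hat\Omega\times[0,1]$ into a thin slab near $x_{n+1}=0$ controlled by $\delta^{\gamma}E$ via the a~priori bound and the complement controlled via a weighted Caccioppoli inequality together with \eqref{eq:stability1}, then optimizing $\delta$ --- is in substance the paper's replacement of \cite[(5.67)]{RS20Calderon} by the thin-slab bound $Ch^{\gamma}E$ followed by rerunning the argument for \cite[(5.5)]{RS20Calderon}.
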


\begin{proof}
	Estimate \eqref{eq:stability1} is an immediate consequence of \cite[(5.3) of Theorem 5.1]{RS20Calderon}.
	Replacing \cite[(5.67) of Theorem 5.1]{RS20Calderon} by the following
	inequality: 
	\begin{align*}
		&\left\|x_{n+1}^{\frac{1-2s}{2}+\gamma}\nabla'\bm{v}^{{\rm cs}}(t)\right\|_{L^{2}(\hat{\Omega}\times[0,h])} \\ \le& C\left\|x_{n+1}^{\frac{1-2s}{2}}\nabla'\bm{v}^{{\rm cs}}(t)\right\|_{L^{2}(\hat{\Omega}\times[0,h])} \left\|x_{n+1}^{\gamma}\right\|_{L^{\infty}(\Omega\times[0,h])}\\
		 \le & Ch^{\gamma}E,
	\end{align*}
	and 
	\begin{align*}
		& \left\|x_{n+1}^{\frac{1-2s}{2}+\gamma}\partial_{n+1}\bm{v}^{{\rm cs}}(t)\right\|_{L^{2}(\hat{\Omega}\times[0,h])} \\ 
		\le & C\left\|x_{n+1}^{\frac{1-2s}{2}}\partial_{n+1}\bm{v}^{{\rm cs}}(t)\right\|_{L^{2}(\hat{\Omega}\times[0,h])}\left\|x_{n+1}^{\gamma}\right\|_{L^{\infty}(\Omega\times[0,h])}\\
		\le & Ch^{\gamma}E,
	\end{align*}
	we can prove \eqref{eq:stability2} using the similar argument as
	in the proof of \cite[(5.5), Theorem 5.1]{RS20Calderon}, with a slight
	modification as indicated above. 
\end{proof}
\begin{rmk}
In view of \cite[Lemma 4.2]{RS20Calderon}, we have 
\begin{align*}
&\left\|x_{n+1}^{\frac{1-2s}{2}}v^{{\rm cs}}\right\|_{L^{\infty}(0,T;L^{2}(\mathbb{R}^{n}\times[0,C_{1}]))}+\left\|x_{n+1}^{\frac{1-2s}{2}}\nabla v^{{\rm cs}}\right\|_{L^{\infty}(0,T;L^{2}(\mathbb{R}_{+}^{n+1}))}\\
\le & C\|v\|_{L^{\infty}(0,T;H^{s}(\mathbb{R}^{n}))},
\end{align*}
therefore, \eqref{eq:apriori1b} can be achieved by the following
sufficient condition: 
\[
\norm{v}_{L^{\infty}(0,T;H^{s}(\mathbb{R}^{n}))}\le E.
\]
\end{rmk}

We now define 
\[
\omega_{1}(z):=\log^{-\mu}\LC \frac{C}{z}\RC ,\quad\text{for}\;\,0<z<1.
\]
Note that $\omega_{1}^{2}(z)$ is concave on $z\in(0,z_{0})$ for
some sufficiently small $z_{0}=z_0(\mu)>0$. From
\eqref{eq:stability1} and \eqref{eq:stability2}, together with \cite[Lemma 4.4]{RS20Calderon},
we obtain 
\begin{align}\label{eq:patch1-stability1}
 \begin{split}
 	& \mathbbm1_{\left\{\eta(t)<z_{0}E\right\}}\|\bm{v}(t)\|_{{H}^{s-\gamma}(\Omega)} \\
 	\le&\mathbbm1_{\left\{\eta(t)<z_{0}E\right\}}C \LC \left\| x_{n+1}^{\frac{1-2s}{2}}\bm{v}^{{\rm cs}}(t)\right\|_{L^{2}(\hat{\Omega}\times[0,1])}+\left\| x_{n+1}^{\frac{1-2s}{2}+\gamma}\nabla\bm{v}^{{\rm cs}}(t)\right\|_{L^{2}(\hat{\Omega}\times[0,1])}\RC \\
 	\le&\mathbbm1_{\left\{\eta(t)<z_{0}E\right\}}CE\omega_{1} \LC \frac{\eta(t)}{E}\RC\\
 	=&CE\omega_{1}\LC \mathbbm1_{\left\{\frac{\eta(t)}{E}<z_{0}\right\}}\frac{\eta(t)}{E}\RC.
 \end{split}
\end{align}

Using Jenson's inequality for concave functions, we have 
\begin{equation}
\frac{1}{T}\int_{0}^{T}\omega_{1}^{2}\LC \mathbbm1_{\left\{\frac{\eta(t)}{E}<z_{0}\right\}}\frac{\eta(t)}{E}\RC dt\le\omega_{1}^{2}\LC \frac{1}{T}\int_{0}^{T}\mathbbm1_{\left\{\frac{\eta(t)}{E}<z_{0}\right\}}\frac{\eta(t)}{E}\,dt\RC.\label{eq:patch2-stability1}
\end{equation}
Combining \eqref{eq:patch1-stability1} and \eqref{eq:patch2-stability1} implies 
\begin{equation}\label{eq:patch1-stability2}
\int_{0}^{T}\mathbbm1_{\left\{\eta(t)<z_{0}E\right\}}\|\bm{v}(t)\|_{{H}^{s-\gamma}(\Omega)}^{2}\,dt\le C^{2}E^{2}T\omega_{1}^{2}\LC \frac{1}{TE}\int_{0}^{T}\mathbbm1_{\left\{\frac{\eta(t)}{E}<z_{0}\right\}}\eta(t)\,dt\RC.
\end{equation}
We extend $\omega_{1}$ so that it is continuous and monotone increasing
on $(0,\infty)$. Therefore, \eqref{eq:patch1-stability2} gives
\begin{align}\label{eq:patch1-stability3}
 \begin{split}
 	 \int_{0}^{T}\mathbbm1_{\left\{\eta(t)<z_{0}E\right\}}\|\bm{v}(t)\|_{{H}^{s-\gamma}(\Omega)}^{2}\,dt
 	\le & C^{2}E^{2}T\omega_{1}^{2}\LC \frac{1}{TE}\int_{0}^{T}\eta(t)\,dt\RC \\
 	\le& C^{2}E^{2}T\omega_{1}^{2}\LC \frac{\|\eta\|_{L^{2}(0,T)}}{T^{\frac{1}{2}}E}\RC\\
 	=&C^{2}E^{2}T\omega_{1}^{2}\LC \frac{a_{s}\|(-\Delta)^{s}v\|_{L^{2}(0,T;H^{-s}(W))}}{T^{\frac{1}{2}}E}\RC.
 \end{split}
\end{align}
On the other hand, from \cite[Lemma 4.4]{RS20Calderon}, it follows
that 
\begin{align}\label{eq:patch1-stability4}
\begin{split}
	 & \mathbbm1_{\left\{\eta(t)\ge z_{0}E\right\}}\|\bm{v}(t)\|_{{H}^{s-\gamma}(\Omega)} \\
	\le & \mathbbm1_{\left\{\eta(t)\ge z_{0}E\right\}}C \LC   \left\| x_{n+1}^{\frac{1-2s}{2}}\bm{v}^{{\rm cs}}(t)\right\|_{L^{2}(\hat{\Omega}\times[0,1])}+\left\| x_{n+1}^{\frac{1-2s}{2}+\gamma}\nabla\bm{v}^{{\rm cs}}(t)\right\|_{L^{2}(\hat{\Omega}\times[0,1])}\RC \\
	\le &\mathbbm1_{\left\{\eta(t)\ge z_{0}E\right\}}C \LC \left\| x_{n+1}^{\frac{1-2s}{2}}v^{{\rm cs}}\right\|_{L^{\infty}(0,T;L^{2}(\mathbb{R}^{n}\times[0,C_{1}]))}+\left\| x_{n+1}^{\frac{1-2s}{2}}\nabla v^{{\rm cs}}\right\|_{L^{\infty}(0,T;L^{2}(\mathbb{R}_{+}^{n+1}))}\RC \\
	\le&\mathbbm1_{\left\{\eta(t)\ge z_{0}E\right\}}CE \\
	\le &Cz_{0}^{-1}\eta(t)\\
	= & Cz_{0}^{-1}a_{s}\|(-\Delta)^{s}\bm{v}(t)\|_{H^{-s}(W)}.
\end{split}
\end{align}
Squaring both sides of \eqref{eq:patch1-stability4} and subsequently
integrating it, we obtain 
\begin{align}\label{eq:patch1-stability5}
\begin{split}
	&\int_{0}^{T}\mathbbm1_{\left\{\eta(t)\ge E\right\}}\|\bm{v}(t)\|_{{H}^{s-\gamma}(\Omega)}^{2}\,dt \\ \le &C^{2}z_{0}^{-2}a_{s}^{2}\|(-\Delta)^{s}v\|_{L^{2}(0,T;H^{-s}(W))}^{2} \\
	=&C^{2}z_{0}^{-2}E^{2}a_{s}^{2} \LC \frac{\|(-\Delta)^{s}v\|_{L^{2}(0,T;H^{-s}(W))}}{E}\RC^{2}.
\end{split}
\end{align}
Summing \eqref{eq:patch1-stability3} and \eqref{eq:patch1-stability5} yields 
\begin{align}\label{eq:patch1-stability6}
 \begin{split}
 	 \|v\|_{L^{2}(0,T;{H}^{s-\gamma}(\Omega))}^{2}=&\int_{0}^{T}\|\bm{v}(t)\|_{{H}^{s-\gamma}(\Omega)}^{2}\,dt \\
  \le & C^{2}E^{2}\left[T\omega_{1}^{2}\LC\frac{a_{s}\|(-\Delta)^{s}v\|_{L^{2}(0,T;H^{-s}(W))}}{T^{\frac{1}{2}}E}\RC  \right.\\
  & \qquad \qquad \left. +z_{0}^{-2}a_{s}^{2}\LC \frac{\|(-\Delta)^{s}v\|_{L^{2}(0,T;H^{-s}(W))}}{E}\RC ^{2}\right].
 \end{split}
\end{align}

We now define 
\[
\omega(z):=\LC T\omega_{1}^{2}(T^{-\frac{1}{2}}a_{s}z)+z_{0}^{-2}a_{s}^{2}z^{2}\RC^{\frac{1}{2}}.
\]
Note that $\omega(z)$ is of logarithmic type when $z$ is small. Therefore, \eqref{eq:patch1-stability6} can be written as 
\begin{equation*}
\|v\|_{L^{2}(0,T;{H}^{s-\gamma}(\Omega))}\le CE\omega \LC \frac{\|(-\Delta)^{s}v\|_{L^{2}(0,T;H^{-s}(W))}}{E}\RC.
\end{equation*}
We summarize the above discussions in the following corollary. 
\begin{cor}\label{cor:small2} 	
	Let $W\subset\Omega_{e}$ be an open bounded Lipschitz set and $\overline{W}\cap\overline{\Omega}=\emptyset$.
	If there exists a constant $E>0$ such that 
	\begin{equation}
		\|v\|_{L^{\infty}(0,T;H^{s}(\mathbb{R}^{n}))}\le E,\label{eq:apriori2}
	\end{equation}
	then there exists a constant $C>1$ and a function of logarithmic type $\omega$, both depending only
	on $n,s,\Omega,W,\gamma,T$, such that 
	\begin{equation*}
		\|v\|_{L^{2}(0,T;H^{s-\gamma}(\Omega))}\le CE\omega\LC \frac{ \left\|(-\Delta)^{s}v\right\|_{L^{2}(0,T;H^{-s}(W))}}{E}\RC.
	\end{equation*}
\end{cor}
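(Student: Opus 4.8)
The plan is to obtain this corollary from Lemma~\ref{lem:small1} by a case analysis on the amplitude $\eta(t)=a_s\|(-\Delta)^s\bm v(t)\|_{H^{-s}(W)}$, following the computation carried out above. First I would check that the single hypothesis \eqref{eq:apriori2}, namely $\|v\|_{L^\infty(0,T;H^s(\R^n))}\le E$, supplies the a priori bound \eqref{eq:apriori1b} needed to apply Lemma~\ref{lem:small1}; this follows from the boundedness on $H^s$ of the Caffarelli--Silvestre extension map (cf.\ \cite[Lemma~4.2]{RS20Calderon}), which bounds the weighted $L^\infty(0,T;L^2)$ norms of $v^{\rm cs}$ and $\nabla v^{\rm cs}$ by $\|v\|_{L^\infty(0,T;H^s(\R^n))}$, and in particular also gives $\eta(t)\le CE$.

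Next I would convert the weighted extension estimates \eqref{eq:stability1}--\eqref{eq:stability2} into a pointwise-in-time bound for $\|\bm v(t)\|_{H^{s-\gamma}(\Omega)}$. On the set $\{\eta(t)<z_0E\}$, where $\omega_1(z)=\log^{-\mu}(C/z)$ and $z_0$ is the threshold below which $\omega_1^2$ is concave, the trace/interpolation inequality \cite[Lemma~4.4]{RS20Calderon} turns \eqref{eq:stability1}--\eqref{eq:stability2} into $\|\bm v(t)\|_{H^{s-\gamma}(\Omega)}\le CE\,\omega_1(\eta(t)/E)$. On the complementary set $\{\eta(t)\ge z_0E\}$ I would use only the crude bound $\|\bm v(t)\|_{H^{s-\gamma}(\Omega)}\le CE\le Cz_0^{-1}\eta(t)$, which is linear in $\eta(t)$.

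Then I would integrate in $t$. On $\{\eta(t)<z_0E\}$ the key point is to pull the time average inside the concave function via Jensen's inequality, $\tfrac1T\int_0^T\omega_1^2(\mathbbm1_{\{\eta(t)<z_0E\}}\eta(t)/E)\,dt\le\omega_1^2\big(\tfrac1{TE}\int_0^T\eta(t)\,dt\big)$, and then use monotonicity of the continuous extension of $\omega_1$ together with Cauchy--Schwarz to replace $\tfrac1{TE}\int_0^T\eta(t)\,dt$ by $T^{-1/2}E^{-1}\|\eta\|_{L^2(0,T)}=T^{-1/2}E^{-1}a_s\|(-\Delta)^sv\|_{L^2(0,T;H^{-s}(W))}$. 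On $\{\eta(t)\ge z_0E\}$ the square of the linear bound integrates to a term $\lesssim z_0^{-2}a_s^2\|(-\Delta)^sv\|_{L^2(0,T;H^{-s}(W))}^2$. Summing the two pieces and taking a square root gives the assertion with $\omega(z)=\big(T\omega_1^2(T^{-1/2}a_sz)+z_0^{-2}a_s^2z^2\big)^{1/2}$, which is of logarithmic type as $z\to0^+$ because the quadratic summand is absorbed by the logarithmic one.

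The step I expect to need the most care is the bookkeeping forced by the fact that $\omega_1^2$ is concave only on $(0,z_0)$: this is exactly why one splits according to the size of $\eta(t)$, and why the regime $\{\eta(t)\ge z_0E\}$ is harmless --- there $\|\bm v(t)\|_{H^{s-\gamma}(\Omega)}\lesssim E$ is already comparable to $\eta(t)$, so it contributes only a Lipschitz, hence subordinate, term. Everything else is a direct combination of Lemma~\ref{lem:small1}, \cite[Lemma~4.4]{RS20Calderon}, Jensen's inequality, and Cauchy--Schwarz.
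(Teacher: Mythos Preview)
Your proposal is correct and follows the paper's approach essentially step for step: verify \eqref{eq:apriori1b} from \eqref{eq:apriori2} via \cite[Lemma~4.2]{RS20Calderon}, convert the weighted extension bounds of Lemma~\ref{lem:small1} into an $H^{s-\gamma}(\Omega)$ bound via \cite[Lemma~4.4]{RS20Calderon}, split according to $\{\eta(t)<z_0E\}$ versus $\{\eta(t)\ge z_0E\}$, apply Jensen's inequality and monotonicity on the first set and the crude linear bound on the second, and then sum to arrive at $\omega(z)=\big(T\omega_1^2(T^{-1/2}a_sz)+z_0^{-2}a_s^2z^2\big)^{1/2}$. This is precisely the argument the paper gives in \eqref{eq:patch1-stability1}--\eqref{eq:patch1-stability6}.
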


\subsection{Quantitative unique continuation}

Given any $F\in L^{2}(\Omega_{T})$, by Corollary~\ref{thm:well},
there exists a unique solution $v_{F}$ of the backward wave equation 
\begin{equation}
	\begin{cases}
		\LC \partial_{t}^{2}+(-\Delta)^{s}+q\RC  v_{F}=F & \text{ in }\Omega_{T},\\
		v_{F}=0 & \text{ in }(\Omega_{e})_{T},\\
		v_{F}=\partial_{t}v_{F}=0 & \text{ in }\mathbb{R}^{n}\times\{T\},
	\end{cases}\label{eq:backward-heat-equation}
\end{equation}
such that 
\begin{equation*}
	\|v_{F}\|_{L^{\infty}(0,T;\wt{H}^{s}(\Omega))}\le C_0\|F\|_{L^{2}(\Omega_{T})},
\end{equation*}
for some constant $C_0>0$ independent of $v_F$ and $F$.
Choosing $E=C_0\|F\|_{L^{2}(\Omega_{T})}$, the condition \eqref{eq:apriori2}
satisfies, and then we can employ Corollary~\ref{cor:small2} with
$v=v_{F}$ to obtain 
\begin{equation}\label{eq:stability5}
	\|v_{F}\|_{L^{2}(0,T;{H}^{s-\gamma}(\Omega))}\le C\|F\|_{L^{2}(\Omega_{T})}\omega\LC \frac{\left\|(-\Delta)^{s}v_{F} \right\|_{L^{2}(0,T;H^{-s}(W))}}{\|F\|_{L^{2}(\Omega_{T})}}\RC .
\end{equation}

Meanwhile, for any function $u\in H^{-2}(0,T;H^{s-\gamma}(\Omega))$, by the duality argument, one has 
\begin{equation}
\|u\|_{H^{-2}(0,T;H^{s-\gamma}(\Omega))}\le\|u\|_{L^{2}(0,T;H^{s-\gamma}(\Omega))}.\label{eq:dual-space-rel1}
\end{equation}
Likewise, if $\bm{u}(T)=\partial_t\bm{u}(T)=0$, we can see that
\begin{equation}\label{eq:dual-space-rel2}
\left\|\partial_{t}^{2}u\right\|_{H^{-2}(0,T;H^{-s-\gamma}(\Omega))}\le\|u\|_{L^{2}(0,T;H^{-s-\gamma}(\Omega))}.
\end{equation}
Now, back to the backward wave equation \eqref{eq:backward-heat-equation}, if $\|q\|_{L^{\infty}(\Omega)}\le M$, by \eqref{eq:dual-space-rel1},
we have 
\begin{align}\label{eq:norm1}
	\begin{split}
		& \|F\|_{H^{-2}(0,T;H^{-s-\gamma}(\Omega))}\\
		\le&\left\|\partial_{t}^{2}v_{F}\right\|_{H^{-2}(0,T;H^{-s-\gamma}(\Omega))} \\
		&+\left\|(-\Delta)^{s}v_{F}\right\|_{H^{-2}(0,T;H^{-s-\gamma}(\Omega))}+\left\|qv_{F}\right\|_{H^{-2}(0,T;H^{-s-\gamma}(\Omega))} \\
		\le&\left\|\partial_{t}^{2}v_{F}\right\|_{H^{-2}(0,T;H^{-s-\gamma}(\Omega))}\\
		&+\left\|v_{F}\right\|_{H^{-2}(0,T;H^{s-\gamma}(\Omega))}+\|qv_{F}\|_{L^{2}(\Omega_{T})}\\
		\le& \left\|\partial_{t}^{2}v_{F}\right\|_{H^{-2}(0,T;H^{-s-\gamma}(\Omega))}+\|v_{F}\|_{L^{2}(0,T;H^{s-\gamma}(\Omega))}+M\|v_{F}\|_{L^{2}(\Omega_{T})}.
	\end{split}
\end{align}
Since $v_F$ satisfies $v_{F}=\partial_{t}v_{F}=0$ in $\mathbb{R}^{n}\times\{T\}$, via \eqref{eq:dual-space-rel2}, one obtains  
\begin{align}\label{eq:dual-space-rel3}
	\left\|\partial_{t}^{2}v_{F}\right\|_{H^{-2}(0,T;H^{-s-\gamma}(\Omega))}\le\|v_{F}\|_{L^{2}(0,T;H^{-s-\gamma}(\Omega))}.
\end{align}
Therefore, plugging \eqref{eq:dual-space-rel3} into \eqref{eq:norm1} yields 
\begin{equation}
	\|F\|_{H^{-2}(0,T;H^{-s-\gamma}(\Omega))}\le C\|v_{F}\|_{L^{2}(0,T;H^{s-\gamma}(\Omega))}\label{eq:norm2}
\end{equation}
provided $0<\gamma<s$. Combining \eqref{eq:stability5} and \eqref{eq:norm2}gives
\begin{equation}\label{eq:UCP1}
	\|F\|_{H^{-2}(0,T;H^{-s-\gamma}(\Omega))}\le C\|F\|_{L^{2}(\Omega_{T})}\omega\LC\frac{\left\|(-\Delta)^{s}v_{F}\right\|_{L^{2}(0,T;H^{-s}(W))}}{\|F\|_{L^{2}(\Omega_{T})}}\RC.
\end{equation}

We now investigate the Poisson operator $\mathcal{P}_{q}$ given in
\eqref{Poisson operator} in the following lemma. 
\begin{lem}\label{lem:Poisson-basic-properties}
 Suppose that $q\in L^{\infty}(\Omega)$.
	Let $\mathcal{P}_{q}$ be the Poisson operator given in \eqref{Poisson operator}.
	Then 
	\begin{equation}\label{eq:poisson-hilbert}
		\mathcal{P}_{q}-{\rm Id}:L^{2}(0,T;H_{\overline{W}}^{2s})\rightarrow L^{2}(\Omega_T)
	\end{equation}
	is a compact injective linear operator. Moreover, for each $F\in L^{2}(\Omega_{T})$,
	the adjoint operator of $\mathcal{P}_{q}-{\rm Id}$ is given by 
	\begin{equation}
		\LC \mathcal{P}_{q}-{\rm Id}\RC^{\ast}F=-(-\Delta)^{s}v_{F},\label{eq:adjoint}
	\end{equation}
	where $v_{F}$ is the solution of \eqref{eq:backward-heat-equation}. 
\end{lem}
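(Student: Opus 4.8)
The plan is to verify the three assertions in sequence: boundedness, compactness, injectivity, and then the formula for the adjoint. For the boundedness of $\mathcal{P}_q - \mathrm{Id}$, I would start from Corollary \ref{thm:well} applied with $F=0$, $\varphi=\psi=0$, and exterior data $f\in C_c^\infty(W_T)$ (extended by density to $L^2(0,T;H^{2s}_{\overline W})$): the energy estimate \eqref{eq:energy-est} gives
\[
\|u-f\|_{L^\infty(0,T;\wt H^s(\Omega))}+\|\partial_t(u-f)\|_{L^\infty(0,T;L^2(\Omega))}\le C\|(-\Delta)^s f\|_{L^2(0,T;L^2(\Omega))}\le C\|f\|_{L^2(0,T;H^{2s}_{\overline W})},
\]
where the last inequality uses that $(-\Delta)^s$ maps $H^{2s}(\mathbb{R}^n)$ boundedly into $L^2(\mathbb{R}^n)$. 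Since $(\mathcal{P}_q-\mathrm{Id})f = u - f$ restricted to $\Omega_T$, this is exactly an $L^2(0,T;\wt H^s(\Omega))$ bound on $(\mathcal{P}_q-\mathrm{Id})f$, hence an $L^2(\Omega_T)$ bound; so $\mathcal{P}_q-\mathrm{Id}$ is a bounded linear map into $L^2(0,T;\wt H^s(\Omega))$. Compactness of \eqref{eq:poisson-hilbert} then follows by factoring through the compact Rellich-type embedding $L^2(0,T;\wt H^s(\Omega))\hookrightarrow L^2(0,T;L^2(\Omega))=L^2(\Omega_T)$, which is compact because $\wt H^s(\Omega)\hookrightarrow L^2(\Omega)$ is compact (the domain $\Omega$ being bounded) — more carefully, one should also use the uniform-in-time $H^1$-in-$t$ control from $\partial_t(u-f)\in L^\infty(0,T;L^2(\Omega))$ together with an Aubin--Lions argument to upgrade to compactness in the time variable as well, though compactness in the spatial variable alone already suffices once we observe the image sits in a set bounded in $L^\infty(0,T;\wt H^s(\Omega))\cap H^1(0,T;L^2(\Omega))$.

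For injectivity: suppose $f\in L^2(0,T;H^{2s}_{\overline W})$ with $(\mathcal{P}_q-\mathrm{Id})f = 0$ in $\Omega_T$, i.e. $u = f$ in $\Omega_T$. Since $f$ is supported in $\overline W\subset\Omega_e$ and $\overline W\cap\overline\Omega=\emptyset$, this forces $u=0$ in $\Omega_T$; combined with $u=f$ in $(\Omega_e)_T$ and the equation $(\partial_t^2+(-\Delta)^s+q)u=0$ in $\Omega_T$, we get, for a.e. $t$, that $u(\cdot,t)=0$ on $\Omega$ and $(-\Delta)^s u(\cdot,t)=0$ on $\Omega$ as well (reading off $(-\Delta)^s u = -\partial_t^2 u - qu = 0$ on $\Omega$). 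The strong uniqueness property for the fractional Laplacian (\cite[Theorem 1.2]{ghosh2016calder}) then yields $u(\cdot,t)\equiv 0$ on $\mathbb{R}^n$ for a.e.\ $t$, whence $f = u|_{(\Omega_e)_T} = 0$. This is essentially the same argument as in the proof of Theorem \ref{thm:Runge}.

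For the adjoint formula \eqref{eq:adjoint}: I would compute, for $f\in C_c^\infty(W_T)$ and $F\in L^2(\Omega_T)$, the pairing $((\mathcal{P}_q-\mathrm{Id})f,F)_{L^2(\Omega_T)} = (u-f,F)_{L^2(\Omega_T)}$, and use the backward equation \eqref{eq:backward-heat-equation} satisfied by $v_F$ to replace $F = (\partial_t^2+(-\Delta)^s+q)v_F$. Since $u-f$ is supported in $\overline\Omega_T$, I can integrate over $(\mathbb{R}^n)_T$ and integrate by parts in $t$ (using $u(\cdot,0)=\partial_t u(\cdot,0)=0$ and $v_F(\cdot,T)=\partial_t v_F(\cdot,T)=0$, together with the support in $\overline\Omega$ killing the spatial boundary terms), mirroring the computation \eqref{Runge computation} in the proof of Theorem \ref{thm:Runge}. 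This converts the pairing into $-(f,(-\Delta)^s v_F)_{L^2(W_T)}$ after the $(-\Delta)^s$ is moved onto $v_F$ and the terms involving $(\partial_t^2+(-\Delta)^s+q)u=0$ in $\Omega_T$ drop out; since $f$ ranges over a dense subset of $L^2(0,T;H^{2s}_{\overline W})$, this identifies $(\mathcal{P}_q-\mathrm{Id})^*F = -(-\Delta)^s v_F$ as an element of the dual of $L^2(0,T;H^{2s}_{\overline W})$, i.e. in $L^2(0,T;H^{-2s}(W))$. The main obstacle is bookkeeping regarding which Sobolev space $(-\Delta)^s v_F$ naturally lives in and verifying that all the integration-by-parts manipulations are justified at the stated regularity (in particular that $v_F\in L^2(0,T;\wt H^s(\Omega))\cap H^1(0,T;L^2(\Omega))$ with $\partial_t^2 v_F\in L^2(0,T;H^{-s}(\Omega))$ is enough to legitimize the pairings); this is routine but must be done with care so that \eqref{eq:adjoint} is an identity of functionals on the correct space.
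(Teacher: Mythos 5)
Your proposal follows the same route as the paper for all three assertions: boundedness and compactness from the energy estimate plus an Aubin--Lions argument, injectivity from the strong unique continuation property of the fractional Laplacian, and the adjoint formula from the backward equation \eqref{eq:backward-heat-equation} and integration by parts. The injectivity step and the adjoint computation are correct and essentially verbatim what the paper does.

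There is, however, a real confusion in your compactness step that you should be careful not to leave in a final write-up. You open by asserting that ``the compact Rellich-type embedding $L^2(0,T;\wt H^s(\Omega))\hookrightarrow L^2(0,T;L^2(\Omega))=L^2(\Omega_T)$ \dots is compact because $\wt H^s(\Omega)\hookrightarrow L^2(\Omega)$ is compact.'' That is false: compactness of $X\hookrightarrow Y$ does \emph{not} make $L^2(0,T;X)\hookrightarrow L^2(0,T;Y)$ compact (consider $u_k(t,x)=e^{2\pi i k t}\phi(x)$ with $\phi\in\wt H^s(\Omega)$ fixed; this is bounded in $L^2(0,T;\wt H^s)$ with no strongly convergent subsequence in $L^2(\Omega_T)$). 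Compactness in time must be earned through the $\partial_t(u-f)$ bound, and that is exactly what the Aubin--Lions--Simon theorem provides. Your subsequent sentence--``compactness in the spatial variable alone already suffices once we observe the image sits in a set bounded in $L^\infty(0,T;\wt H^s(\Omega))\cap H^1(0,T;L^2(\Omega))$''--is self-contradictory: quoting the Aubin--Lions hypothesis is precisely invoking the time-variable control, not dispensing with it. The correct statement, which is what the paper proves, is that $\mathcal{P}_q-\mathrm{Id}$ maps boundedly into $L^2(0,T;\wt H^s(\Omega))\cap H^1(0,T;L^2(\Omega))$ and that \emph{this} space embeds compactly into $L^2(\Omega_T)$ by Aubin--Lions--Simon. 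Present the Aubin--Lions step as the core of the argument, not as an optional refinement, and drop the false claim about $L^2(0,T;\wt H^s)\hookrightarrow L^2(\Omega_T)$ being compact on its own.
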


\begin{proof}
	[Proof of Lemma~{\rm \ref{lem:Poisson-basic-properties}}] It is worth pointing out that the function $(\mathcal{P}_{q}-{\rm Id})f$ is equal to $\left. \mathcal{P}_q f \right|_{\Omega_T}$, for any $f\in L^{2}(0,T;H_{\overline{W}}^{2s})$. We split the proof into several steps.
	
	\vspace{3mm} 
	\noindent{\it Step 1. Compactness.}
	\vspace{3mm} 
	
	Using \eqref{eq:energy-est}, we can see that 
	\[
	\left\|(\mathcal{P}_{q}-{\rm Id})f\right\|_{L^{2}(0,T;\wt{H}^{s}(\Omega))}+\left\|\partial_t((\mathcal{P}_{q}-{\rm Id})f)\right\|_{L^{2}(0,T;L^2(\Omega))}\le C_{T} \|f\|_{L^{2}(0,T;H_{\overline{W}}^{2s})}.
	\]
	In other words, $\mathcal{P}_{q}-{\rm Id}:L^{2}(0,T;H_{\overline{W}}^{2s})\rightarrow L^{2}(0,T;\wt{H}^{s}(\Omega))\cap H^1(0,T;L^2(\Omega))$
	is a bounded linear operator. Since the embedding $\wt{H}^{s}(\Omega) \hookrightarrow L^{2}(\Omega)$ is compact, using the Aubin-Lions-Simon Theorem in \cite[Theorem II.5.16]{BF13tool}, we know that the embedding 
	\[
	L^{2}(0,T;\wt{H}^{s}(\Omega))\cap H^1(0,T;L^2(\Omega))\hookrightarrow L^{2}(\Omega_T) \quad \text{is compact}.
	\]
	Therefore, we see that the operator \eqref{eq:poisson-hilbert} is compact. 
	
	\vspace{3mm}
	\noindent{\it Step 2. Injectivity.}
	\vspace{3mm} 
	
	Suppose that $f\in\ker(\mathcal{P}_{q}-{\rm Id})$, then $\mathcal{P}_{q}f=0$
	in $\Omega_{T}$. From the definition of the Poisson operator, $u=\mathcal{P}_{q}f$ satisfies 
	\begin{equation}
		\LC \partial_{t}^{2}+(-\Delta)^{s}+q\RC  u=0\quad\text{ in }\;\,\Omega_{T}.\label{eq:Poisson-basic-properties1}
	\end{equation}
	Since $u=0$ in $\Omega_{T}$, from \eqref{eq:Poisson-basic-properties1}, we have that
	$(-\Delta)^{s}u=0$ in $\Omega_{T}$. Therefore, using the strong
	uniqueness for the fractional Laplacian again,
	we know that $u\equiv0$ throughout $\mathbb{R}^{n}$, and hence $f\equiv0$,
	which concludes that $\mathcal{P}_{q}$ is injective. 
	
	\vspace{3mm}
	\noindent{\it Step 3. Computing the adjoint operator.}
	\vspace{3mm}
	
	Given any $f\in L^{2}(0,T;H_{\overline{W}}^{2s})$, $F\in L^2(\Omega_T)$, we have 
	\begin{align*}
		\begin{split}
			& \LC(\mathcal{P}_{q}-{\rm Id})f,F \RC_{L^{2}(\Omega_{T})}\\
			=&\LC \mathcal{P}_{q}f,F\RC_{L^{2}(\Omega_{T})}\\
			=&\int_{\Omega_{T}} \mathcal{P}_{q}f \LC \partial_{t}^{2}v_{F}+ (-\Delta)^{s}v_{F}+ qv_{F}\RC dxdt\\
		    =&\int_{\Omega_{T}}\LC \partial_{t}^{2}\mathcal{P}_{q}f+q\mathcal{P}_{q}f\RC v_{F}\,dxdt+\LC\int_{(\mathbb{R}^{n})_{T}}-\int_{(\Omega_{e})_{T}}\RC(\mathcal{P}_{q}f)(-\Delta)^{s}v_{F}\,dxdt\\
			=&\int_{\Omega_{T}}\LC \partial_{t}^{2}\mathcal{P}_{q}f+(-\Delta)^{s}\mathcal{P}_{q}f+q\mathcal{P}_{q}f\RC v_{F}\,dxdt-\int_{(\Omega_{e})_{T}}(\mathcal{P}_{q}f)(-\Delta)^{s}v_{F}\,dxdt\\
			=&-\int_{(\mathbb{R}^{n})_{T}}f(-\Delta)^{s}v_{F}\,dxdt.
		\end{split}
	\end{align*}
	Consequently, the arbitrariness of $f$ implies \eqref{eq:adjoint}. 
\end{proof}
Combining \eqref{eq:UCP1} and \eqref{eq:adjoint}, we obtain the
following theorem. 
\begin{thm}
	\label{thm:UCP-quantify} Let $0<s<1$, $q\in L^{\infty}(\Omega)$
	with $\|q\|_{L^{\infty}(\Omega)}\le M$, and $W\subset\Omega_{e}$ be open
	such that $\overline{W}\cap\overline{\Omega}=\emptyset$. There exist
	a constant $C>1$ independent of $F$ and a logarithmic function $\omega$, both depending only on $n,s,\Omega,W,\gamma,T,M$,
	such that 
	\begin{equation} \label{eq:UCP2}
		\|F\|_{H^{-2}(0,T;H^{-s-\gamma}(\Omega))}\le C\|F\|_{L^{2}(\Omega_{T})}\omega\LC \frac{\left\|(\mathcal{P}_{q}-{\rm Id})^{*}F\right\|_{L^{2}(0,T;H^{-s}(W))}}{\|F\|_{L^{2}(\Omega_{T})}}\RC
	\end{equation}
	for all $F\in L^{2}(\Omega_{T})$. 
\end{thm}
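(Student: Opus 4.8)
The plan is to obtain Theorem~\ref{thm:UCP-quantify} by substituting the adjoint identity \eqref{eq:adjoint} into the quantitative unique continuation estimate \eqref{eq:UCP1}. Fix $F\in L^{2}(\Omega_{T})$ and let $v_{F}$ be the solution of the backward problem \eqref{eq:backward-heat-equation}. Estimate \eqref{eq:UCP1} — which was derived from Corollary~\ref{cor:small2} together with the duality bounds \eqref{eq:dual-space-rel1}, \eqref{eq:dual-space-rel2} and the apriori bound $\|q\|_{L^{\infty}(\Omega)}\le M$ — already gives
\[
\|F\|_{H^{-2}(0,T;H^{-s-\gamma}(\Omega))}\le C\|F\|_{L^{2}(\Omega_{T})}\,\omega\LC\frac{\left\|(-\Delta)^{s}v_{F}\right\|_{L^{2}(0,T;H^{-s}(W))}}{\|F\|_{L^{2}(\Omega_{T})}}\RC,
\]
so it only remains to rewrite the argument of $\omega$ in terms of $(\mathcal{P}_{q}-{\rm Id})^{\ast}F$.

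For this I would invoke Lemma~\ref{lem:Poisson-basic-properties}, which identifies $(\mathcal{P}_{q}-{\rm Id})^{\ast}F=-(-\Delta)^{s}v_{F}$. Although a priori $(\mathcal{P}_{q}-{\rm Id})^{\ast}F$ only lives in the dual of $L^{2}(0,T;H_{\overline{W}}^{2s})$, the representation formula \eqref{eq:adjoint} shows it coincides with $-(-\Delta)^{s}v_{F}$, which by the energy estimate \eqref{eq:energy-est} (giving $v_{F}\in L^{\infty}(0,T;\wt{H}^{s}(\Omega))$) actually belongs to $L^{2}(0,T;H^{-s}(\mathbb{R}^{n}))$; hence its restriction to $W_{T}$ is a genuine element of $L^{2}(0,T;H^{-s}(W))$ and
\[
\left\|(\mathcal{P}_{q}-{\rm Id})^{\ast}F\right\|_{L^{2}(0,T;H^{-s}(W))}=\left\|(-\Delta)^{s}v_{F}\right\|_{L^{2}(0,T;H^{-s}(W))},
\]
the sign being irrelevant inside a norm. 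Substituting this equality into the displayed estimate yields \eqref{eq:UCP2} verbatim, with the same constant $C$ and the same logarithmic function $\omega$; these inherit from \eqref{eq:UCP1} and from Lemma~\ref{lem:Poisson-basic-properties} their dependence only on $n,s,\Omega,W,\gamma,T,M$.

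There is essentially no obstacle here: all of the analytic content — the propagation of smallness for the Caffarelli--Silvestre extension in Lemma~\ref{lem:small1} and Corollary~\ref{cor:small2}, the passage from $v_{F}$ to the source $F$ through the two dual-space inequalities, and the computation of the adjoint of $\mathcal{P}_{q}-{\rm Id}$ — has already been carried out. The only point that deserves a line of care is the space in which $(\mathcal{P}_{q}-{\rm Id})^{\ast}F$ is measured, i.e.\ checking that the right-hand side of \eqref{eq:adjoint} has the extra regularity needed to be paired with the $H^{-s}(W)$ norm appearing in \eqref{eq:UCP1}; as noted above, this is immediate from $v_{F}\in L^{\infty}(0,T;\wt{H}^{s}(\Omega))$.
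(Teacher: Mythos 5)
Your proposal is correct and matches the paper's proof exactly: the paper also derives Theorem~\ref{thm:UCP-quantify} by simply combining the already-established estimate \eqref{eq:UCP1} with the adjoint identity \eqref{eq:adjoint} from Lemma~\ref{lem:Poisson-basic-properties}. Your extra remark about checking that $(\mathcal{P}_{q}-{\rm Id})^{\ast}F$ has enough regularity to be measured in $L^{2}(0,T;H^{-s}(W))$ is a sensible clarification the paper leaves implicit, but it introduces no new idea beyond what the paper uses.
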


\subsection{Quantitative Runge approximation}

Observe that $\LC \mathcal{P}_{q}-{\rm Id}\RC^{\ast}\LC \mathcal{P}_{q}-{\rm Id}\RC
$ is a compact, self-adjoint, positive definite operator on $L^{2}(0,T;H_{\overline{W}}^{2s})$ by Lemma~\ref{lem:Poisson-basic-properties}.
Therefore, there exist eigenvalues $\left\{ \mu_{j}\right\}_{j=1}^{\infty}$
with $\mu_{1}\ge\mu_{2}\ge\cdots\rightarrow0$ and eigenfunctions
$\left\{\varphi_{j}\right\}_{j=1}^{\infty}$ of $\LC \mathcal{P}_{q}-{\rm Id}\RC^{\ast}\LC \mathcal{P}_{q}-{\rm Id}\RC$.
Note that $\left\{\varphi_{j}\right\}_{j=1}^{\infty}$
forms an orthonormal basis of $L^{2}(0,T;H_{\overline{W}}^{2s})$.
Define 
\begin{align}\label{w_j}
	\sigma_{j}:=\sqrt{\mu_{j}}\quad\text{and}\quad w_{j}:=\frac{1}{\sigma_{j}}\LC\mathcal{P}_{q}-{\rm Id}\RC\varphi_{j}.
\end{align}
We can easily verify that 
\begin{align*}
	 \LC w_{j},w_{k}\RC_{L^{2}(\Omega_{T})} & =\frac{1}{\sigma_{j}\sigma_{k}}\LC (\mathcal{P}_{q}-{\rm Id})\varphi_{j},(\mathcal{P}_{q}-{\rm Id})\varphi_{k}\RC_{L^{2}(\Omega_{T})}\\
	& =\frac{1}{\sigma_{j}\sigma_{k}}\LC(\mathcal{P}_{q}-{\rm Id})^{\ast}(\mathcal{P}_{q}-{\rm Id})\varphi_{j},\varphi_{k}\RC_{L^{2}(0,T;H_{\overline{W}}^{2s})}\\
	& =\frac{\sigma_{j}}{\sigma_{k}}\LC\varphi_{j},\varphi_{k}\RC_{L^{2}(0,T;H_{\overline{W}}^{2s})}\\
	&=\delta_{jk},
\end{align*}
that is, $\left\{ w_{j}\right\}_{j=1}^{\infty}$ is an
orthonormal set in $L^{2}(\Omega_{T})$. Also, we have 
\begin{equation}
	\LC\mathcal{P}_{q}-{\rm Id}\RC^{\ast}w_{j}=\frac{1}{\sigma_{j}}\LC \mathcal{P}_{q}-{\rm Id}\RC ^{\ast}\LC \mathcal{P}_{q}-{\rm Id}\RC\varphi_{j}=\frac{\mu_{j}}{\sigma_{j}}\varphi_{j}=\sigma_{j}\varphi_{j}.\label{eq:adjoint-observ}
\end{equation}

\begin{lem}
		The set $\left\{w_{j} \right\}_{j=1}^{\infty}$ is complete in
	$L^{2}(\Omega_{T})$. In other words, it is an orthonormal basis of
	$L^{2}(\Omega_{T})$. 
\end{lem}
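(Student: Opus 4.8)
The plan is to show that the orthonormal set $\{w_j\}_{j=1}^\infty$ spans a dense subspace of $L^2(\Omega_T)$ by arguing that its orthogonal complement is trivial. Suppose $g\in L^2(\Omega_T)$ satisfies $(g,w_j)_{L^2(\Omega_T)}=0$ for all $j$. Using the definition $w_j=\sigma_j^{-1}(\mathcal{P}_q-{\rm Id})\varphi_j$ from \eqref{w_j}, this says $((\mathcal{P}_q-{\rm Id})\varphi_j,g)_{L^2(\Omega_T)}=0$, hence $(\varphi_j,(\mathcal{P}_q-{\rm Id})^\ast g)_{L^2(0,T;H_{\overline{W}}^{2s})}=0$ for every $j$. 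Since $\{\varphi_j\}_{j=1}^\infty$ is an orthonormal basis of $L^2(0,T;H_{\overline{W}}^{2s})$, this forces $(\mathcal{P}_q-{\rm Id})^\ast g=0$.

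Next I would invoke the identity \eqref{eq:adjoint} from Lemma~\ref{lem:Poisson-basic-properties}, which gives $(\mathcal{P}_q-{\rm Id})^\ast g=-(-\Delta)^s v_g$, where $v_g$ is the solution of the backward wave equation \eqref{eq:backward-heat-equation} with source $g$. Thus $(-\Delta)^s v_g=0$ in $W_T$. Since $v_g$ also vanishes in $(\Omega_e)_T$ by construction (and $W\subset\Omega_e$), for each fixed $t\in(0,T)$ the function $\bm v_g(t)\in H^s(\mathbb R^n)$ satisfies $\bm v_g(t)=(-\Delta)^s\bm v_g(t)=0$ in the nonempty open set $W$. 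By the strong uniqueness property for the fractional Laplacian (\cite[Theorem 1.2]{ghosh2016calder}), $\bm v_g(t)\equiv 0$ in $\mathbb R^n$ for a.e.\ $t$, so $v_g\equiv 0$ in $\Omega_T$.

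Finally, from the backward equation \eqref{eq:backward-heat-equation} with $v_g\equiv 0$ we read off $g=(\partial_t^2+(-\Delta)^s+q)v_g=0$ in $\Omega_T$. Hence the orthogonal complement of $\{w_j\}_{j=1}^\infty$ is $\{0\}$, and since $\{w_j\}$ is already an orthonormal set, it is an orthonormal basis of $L^2(\Omega_T)$. The only mildly delicate point is the measurability/regularity bookkeeping needed to apply the strong uniqueness result slicewise in $t$ — namely that $v_g\in L^2(0,T;\wt H^s(\Omega))$ (from Corollary~\ref{thm:well}) indeed yields $\bm v_g(t)\in H^s(\mathbb R^n)$ with the stated vanishing for a.e.\ $t$ — but this is exactly the argument already carried out in the proof of Theorem~\ref{thm:Runge}, so it presents no real obstacle.
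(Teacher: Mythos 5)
Your proof is correct, and it takes a genuinely different (if closely related) route from the paper's. The paper's argument is shorter: from $(g,w_j)=0$ it deduces $(g,(\mathcal{P}_q-{\rm Id})\varphi_j)_{L^2(\Omega_T)}=0$ for all $j$, uses the fact that $(\mathcal{P}_q-{\rm Id})f=\mathcal{P}_qf|_{\Omega_T}$ together with boundedness of $\mathcal{P}_q-{\rm Id}$ and density of $\mathrm{span}\{\varphi_j\}$ to get $(g,\mathcal{P}_qf)_{L^2(\Omega_T)}=0$ for all $f\in C_c^\infty(W_T)$, and then simply invokes the already-proven qualitative Runge approximation (Theorem~\ref{thm:Runge}) to conclude $g\equiv0$. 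You instead pass to the adjoint, conclude $(\mathcal{P}_q-{\rm Id})^\ast g=0$, and then unpack this via the explicit adjoint formula \eqref{eq:adjoint} of Lemma~\ref{lem:Poisson-basic-properties} and the strong uniqueness for $(-\Delta)^s$ — in effect re-running, inside this lemma, the same backward-equation-plus-UCP argument that underlies the proof of Theorem~\ref{thm:Runge}. The paper's route is more economical because it reuses Theorem~\ref{thm:Runge} as a black box; yours is more self-contained and makes the role of the adjoint formula \eqref{eq:adjoint} transparent, at the cost of essentially reproving the Runge density step. Both approaches hinge on the same two ingredients: the duality between $\mathcal{P}_q-{\rm Id}$ and the backward wave problem, and the Ghosh--Salo--Uhlmann strong uniqueness property applied slicewise in $t$.

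One small bookkeeping remark worth making explicit if you keep your version: the statement ``$(\mathcal{P}_q-{\rm Id})^\ast g=0$'' is an equality in $L^2(0,T;H^{2s}_{\overline W})$, and the identification of this with $-(-\Delta)^s v_g$ in \eqref{eq:adjoint} is via the pairing $\int_{(\mathbb R^n)_T} f\,(-\Delta)^s v_g\,dx\,dt$ against $f$ supported in $\overline W$; this is what justifies concluding $(-\Delta)^s v_g=0$ as a distribution on $W_T$ (equivalently, in $L^2(0,T;H^{-2s}(W))$), which is exactly the input UCP needs. You flag this as a ``mildly delicate point,'' and indeed it is the same regularity/measurability bookkeeping already done in the proof of Theorem~\ref{thm:Runge}, so no new obstacle arises.
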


\begin{proof}
	Let $v\in L^{2}(\Omega_{T})$ be such that $\LC  v,w_{j}\RC_{L^{2}(\Omega_{T})}=0$
	for all $j$, then 
	\[
	\LC v,\mathcal{P}_{q}\varphi_{j}\RC_{L^{2}(\Omega_{T})}=\LC v,(\mathcal{P}_{q}-{\rm Id})\varphi_{j}\RC_{L^{2}(\Omega_{T})}=0,\quad\text{for all }j.
	\]
	Since $\left\{\varphi_{j}\right\}_{j=1}^{\infty}$ forms
	an orthonormal basis of $L^{2}(0,T;H_{\overline{W}}^{2s})$, then 
	\[
	\LC v,\mathcal{P}_{q}f \RC _{L^{2}(\Omega_{T})}=0\quad\text{ for all }f\in C_{c}^{\infty}(W_{T}).
	\]
	In view of the Runge approximation (Theorem~\ref{thm:Runge}), we conclude
	that $v\equiv 0$. 
\end{proof}
We now fix a number $\alpha>0$. We define the operator $R_{\alpha}:L^{2}(\Omega_{T})\rightarrow L^{2}(0,T;H_{\overline{W}}^{2s})$
by the finite sum 
\begin{equation*}
	R_{\alpha}\phi:=\sum_{\sigma_{j}>\alpha}\frac{1}{\sigma_{j}}\LC \phi,w_{j}\RC_{L^{2}(\Omega_{T})}\varphi_{j}.
\end{equation*}
Since $\left\{\varphi_{j}\right\}_{j=1}^{\infty}$
is an orthonormal basis of $L^{2}(0,T;H_{\overline{W}}^{2s})$, and $\left\{\sigma_{j} \right\}$
is non-increasing, using Parseval's identity, then it is easy to see that 
\begin{equation}\label{eq:SVD1}
	\begin{split}
		\left\|R_{\alpha}\phi\right\|_{L^{2}(0,T;H_{\overline{W}}^{2s})}^{2}=&\sum_{\sigma_{j}>\alpha}\frac{1}{\sigma_{j}^{2}}|(\phi,w_{j})_{L^{2}(\Omega_{T})}|^{2}\\
		\le & \frac{1}{\alpha^{2}}\|\phi\|_{L^{2}(\Omega_{T})}^{2},
	\end{split}
\end{equation}
and
\begin{align}\label{eq:SVD2}
	\begin{split}
		\left\|(\mathcal{P}_{q}-{\rm Id})(R_{\alpha}\phi)-\phi\right\|_{L^{2}(\Omega_{T})}^{2}=&\left\|\sum_{\sigma_{j}>\alpha}\frac{1}{\sigma_{j}}(\phi,w_{j})_{L^{2}(\Omega_{T})}(\mathcal{P}_{q}-{\rm Id})\varphi_{j}-\phi\right\|_{L^{2}(\Omega_{T})}^{2} \\
		=&\left\|\sum_{\sigma_{j}>\alpha}(\phi,w_{j})_{L^{2}(\Omega_{T})}w_{j}-\phi\right\|_{L^{2}(\Omega_{T})}^{2}\\
		=&\left\|\sum_{\sigma_{j}\le\alpha}(\phi,w_{j})_{L^{2}(\Omega_{T})}w_{j}\right\|_{L^{2}(\Omega_{T})}^{2} \\
		 =&\sum_{\sigma_{j}\le\alpha}\left|(\phi,w_{j})_{L^{2}(\Omega_{T})}\right|^{2},
	\end{split}
\end{align}
where we have used the orthonormality of $\left\{ w_j\right\}_{j=1}^\infty$ in $L^2(\Omega_T)$.

Let us define 
\begin{equation}
	r_{\alpha}:=\sum_{\sigma_{j}\le\alpha}\LC \phi,w_{j}\RC_{L^{2}(\Omega_{T})}w_{j}.\label{eq:SVD-expansion2}
\end{equation}
In particular, for any $\phi \in H_0^{2}(0,T; \wt H^{s+\gamma}(\Omega))\subset L^2(\Omega_T)$, combining \eqref{eq:SVD2} and \eqref{eq:SVD-expansion2}, we have
\begin{align}
	\begin{split}
		& \left\|\LC\mathcal{P}_{q}-{\rm Id}\RC \LC R_{\alpha}\phi\RC -\phi\right\|_{L^{2}(\Omega_{T})}^{2}\\
		=&\left|\LC \phi,r_{\alpha}\RC_{L^{2}(\Omega_{T})}\right| \\
		 \le &\|\phi\|_{H_0^{2}(0,T; \wt H^{s+\gamma}(\Omega))}\left\|r_{\alpha}\right\|_{H^{-2}(0,T;H^{-s-\gamma}(\Omega))}.\label{eq:SVD3}
	\end{split}
\end{align}
We now choose $F=r_{\alpha}\in L^{2}(\Omega_{T})$ in \eqref{eq:UCP2},
and we obtain 
\[
\left\|r_{\alpha}\right\|_{H^{-2}(0,T;H^{-s-\gamma}(\Omega))}\le C\left\|r_{\alpha}\right\|_{L^{2}(\Omega_{T})}\omega\LC\frac{\|(\mathcal{P}_{q}-{\rm Id})^{\ast}r_{\alpha}\|_{L^{2}(0,T;H^{-s}(W))}}{\|r_{\alpha}\|_{L^{2}(\Omega_{T})}}\RC,
\]
and thus \eqref{eq:SVD3} implies 
\begin{equation}
	\begin{split}
		&\left\|(\mathcal{P}_{q}-{\rm Id})(R_{\alpha}\phi)-\phi\right\|_{L^{2}(\Omega_{T})}^{2}\\
		\le & C\|\phi\|_{H_0^{2}(0,T; \wt H^{s+\gamma}(\Omega))}\|r_{\alpha}\|_{L^{2}(\Omega_{T})}\omega\LC\frac{\|(\mathcal{P}_{q}-{\rm Id})^{\ast}r_{\alpha}\|_{L^{2}(0,T;H^{-s}(W))}}{\|r_{\alpha}\|_{L^{2}(\Omega_{T})}}\RC.\label{eq:SVD4}
	\end{split}
\end{equation}
In view of \eqref{eq:adjoint-observ}, we have 
\[
\LC \mathcal{P}_{q}-{\rm Id}\RC^{\ast}r_{\alpha}=\sum_{\sigma_{j}\le\alpha}\LC \phi,w_{j}\RC_{L^{2}(\Omega_{T})}(\mathcal{P}_{q}-{\rm Id})^{\ast}w_{j}=\sum_{\sigma_{j}\le\alpha}(\phi,w_{j})_{L^{2}(\Omega_{T})}\sigma_{j}\varphi_{j}.
\]
From the property that $\left\{\varphi_{j}\right\}_{j=1}^{\infty}$ is an orthonormal basis of $L^{2}(0,T;H_{\overline{W}}^{2s})$, it follows 
\begin{align*}
	&\left\|(\mathcal{P}_{q}-{\rm Id})^{\ast}r_{\alpha}\right\|_{L^{2}(0,T;H^{-s}(W))}^{2} \\
	\le &\|(\mathcal{P}_{q}-{\rm Id})^{*}r_{\alpha}\|_{L^{2}(0,T;H_{\overline{W}}^{2s})}^{2}\\
	= &\sum_{\sigma_{j}\le\alpha} \sigma_{j}^{2} \left|(\phi,w_{j})_{L^{2}(\Omega_{T})}\right|^{2}\\
	\le & \alpha^{2}\sum_{\sigma_{j}\le\alpha}\left|(\phi,w_{j})_{L^{2}(\Omega_{T})}\right|^{2}\\
	=&\alpha^{2}\|r_{\alpha}\|_{L^{2}(\Omega_{T})}^{2} ,
\end{align*}
where we have used \eqref{eq:SVD-expansion2} in the last equality.
Since $\omega$ is monotone non-decreasing, \eqref{eq:SVD4} gives 
\begin{equation}
	\left\|(\mathcal{P}_{q}-{\rm Id})(R_{\alpha}\phi)-\phi\right\|_{L^{2}(\Omega_{T})}^{2}\le C\|\phi\|_{H_0^{2}(0,T;\wt H^{s+\gamma}(\Omega))}\left\|r_{\alpha}\right\|_{L^{2}(\Omega_{T})}\omega(\alpha).\label{eq:SVD5}
\end{equation}
Furthermore, observe that 
\begin{align}
	\begin{split}
		& (\mathcal{P}_{q}-{\rm Id})(R_{\alpha}\phi)-\phi \\
		= & \sum_{\sigma_{j}>\alpha}\frac{1}{\sigma_{j}}(\phi,w_{j})_{L^{2}(\Omega_{T})}(\mathcal{P}_{q}-{\rm Id})\varphi_{j}-\phi \\
		=&\sum_{\sigma_{j}>\alpha}(\phi,w_{j})_{L^{2}(\Omega_{T})}w_{j}-\phi \\
		= & -\sum_{\sigma_{j}\le\alpha}(\phi,w_{j})_{L^{2}(\Omega_{T})}w_{j}\\
		=&-r_{\alpha}.\label{eq:SVD6}
	\end{split}
\end{align}
Combining \eqref{eq:SVD5} and \eqref{eq:SVD6} yields 
\begin{equation}
	\left\|(\mathcal{P}_{q}-{\rm Id})(R_{\alpha}\phi)-\phi\right\|_{L^{2}(\Omega_{T})}\le C\|\phi\|_{H^{2}_0(0,T;\wt H^{s+\gamma}(\Omega))}\omega(\alpha).\label{eq:SVD7}
\end{equation}

Given any $\epsilon>0$, there exists a unique $\alpha>0$ such that
$\epsilon=\omega(\alpha)$. Write $f_{\epsilon}=R_{\alpha}\phi$,
and we know that \eqref{eq:SVD1} can be rewritten as 
\begin{equation}\label{eq:SVD8}
 \left\|f_{\epsilon}\right\|_{L^{2}(0,T;H_{\overline{W}}^{2s})}\le\frac{1}{\omega^{-1}(\epsilon)}\|\phi\|_{L^{2}(\Omega_{T})},
\end{equation}
where $\omega^{-1}$ is the inverse function of $\omega$.
Now, as in \cite[Remark 3.4]{RS20Calderon}, since 
\[
\omega(t)=C|\log t|^{-\sigma}\;\; \text{ for }t \text{ small},
\]
we can take $\frac{1}{\omega^{-1}(\eps)}\le\exp (\wt C\eps^{-\mu})$ with $\wt C\ge C^{1/\sigma}$ and $\mu =1/\sigma$ for all $\eps>0$.
Therefore, restating \eqref{eq:SVD7} and \eqref{eq:SVD8} leads to the following
theorem. 
\begin{thm}[Quantitative Runge approximation] \label{thm:Quantative-Runge}
 Let $\|q\|_{L^{\infty}(\Omega)}\le M$ and fix a parameter $\gamma>0$. Given
	any $\phi\in H^{2}_0(0,T;\wt H^{s+\gamma}(\Omega))$, and any $\epsilon>0$,
	there exists $f_{\epsilon}\in L^{2}(0,T;H_{\overline{W}}^{2s})$
	such that 
	\begin{subequations}
		\begin{equation*}
			\left\|\mathcal{P}_{q}f_{\epsilon}-f_{\epsilon}-\phi \right\|_{L^{2}(\Omega_{T})}\le C\|\phi\|_{H^{2}_0(0,T;\wt H^{s+\gamma}(\Omega))}\epsilon,
		\end{equation*}
		and 
		\begin{equation*}
			\left\|f_{\epsilon}\right\|_{L^{2}(0,T;H_{\overline{W}}^{2s})}\le C\exp(\wt{C}\epsilon^{-\mu})\|\phi\|_{L^{2}(\Omega_{T})},
		\end{equation*}
	\end{subequations}
	for some positive constants $\mu$, $C$ and $\wt{C}$, depending
	only on $n,s,\gamma,\Omega,W,\gamma,T,M$. 
\end{thm}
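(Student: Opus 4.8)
The plan is to build $f_{\epsilon}$ from a truncated singular value decomposition of the operator $\mathcal{P}_{q}-{\rm Id}$ and then to calibrate the truncation level against the logarithmic modulus $\omega$ furnished by Theorem~\ref{thm:UCP-quantify}. Since $(\mathcal{P}_{q}-{\rm Id})^{\ast}(\mathcal{P}_{q}-{\rm Id})$ is compact, self-adjoint and positive definite on $L^{2}(0,T;H_{\overline{W}}^{2s})$ by Lemma~\ref{lem:Poisson-basic-properties}, I would first diagonalize it, set $\sigma_{j}=\sqrt{\mu_{j}}$ and $w_{j}=\sigma_{j}^{-1}(\mathcal{P}_{q}-{\rm Id})\varphi_{j}$ as in \eqref{w_j}, and record that $\{\varphi_{j}\}$ is an orthonormal basis of $L^{2}(0,T;H_{\overline{W}}^{2s})$ while $\{w_{j}\}$ is an orthonormal basis of $L^{2}(\Omega_{T})$ — the completeness of $\{w_{j}\}$ being exactly where the qualitative Runge approximation (Theorem~\ref{thm:Runge}) enters — together with the relation $(\mathcal{P}_{q}-{\rm Id})^{\ast}w_{j}=\sigma_{j}\varphi_{j}$.

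Next I would introduce, for a threshold $\alpha>0$, the finite-rank regularized inverse $R_{\alpha}\phi:=\sum_{\sigma_{j}>\alpha}\sigma_{j}^{-1}(\phi,w_{j})_{L^{2}(\Omega_{T})}\varphi_{j}$, so that Parseval's identity gives at once $\|R_{\alpha}\phi\|_{L^{2}(0,T;H_{\overline{W}}^{2s})}\le\alpha^{-1}\|\phi\|_{L^{2}(\Omega_{T})}$, and a direct expansion shows $(\mathcal{P}_{q}-{\rm Id})(R_{\alpha}\phi)-\phi=-r_{\alpha}$ with $r_{\alpha}=\sum_{\sigma_{j}\le\alpha}(\phi,w_{j})_{L^{2}(\Omega_{T})}w_{j}$ the tail projection. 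The heart of the argument is then to bound $r_{\alpha}$: writing $\|r_{\alpha}\|_{L^{2}(\Omega_{T})}^{2}=(\phi,r_{\alpha})_{L^{2}(\Omega_{T})}$ and using the duality pairing between $H_{0}^{2}(0,T;\wt{H}^{s+\gamma}(\Omega))$ and $H^{-2}(0,T;H^{-s-\gamma}(\Omega))$ gives $\|r_{\alpha}\|_{L^{2}(\Omega_{T})}^{2}\le\|\phi\|_{H_{0}^{2}(0,T;\wt{H}^{s+\gamma}(\Omega))}\|r_{\alpha}\|_{H^{-2}(0,T;H^{-s-\gamma}(\Omega))}$. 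Applying Theorem~\ref{thm:UCP-quantify} with $F=r_{\alpha}$, and noting that $(\mathcal{P}_{q}-{\rm Id})^{\ast}r_{\alpha}=\sum_{\sigma_{j}\le\alpha}\sigma_{j}(\phi,w_{j})_{L^{2}(\Omega_{T})}\varphi_{j}$ has $L^{2}(0,T;H^{-s}(W))$-norm at most $\alpha\|r_{\alpha}\|_{L^{2}(\Omega_{T})}$, monotonicity of $\omega$ produces $\|r_{\alpha}\|_{H^{-2}(0,T;H^{-s-\gamma}(\Omega))}\le C\|r_{\alpha}\|_{L^{2}(\Omega_{T})}\omega(\alpha)$. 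Cancelling one power of $\|r_{\alpha}\|_{L^{2}(\Omega_{T})}$ between the last two displays yields $\|r_{\alpha}\|_{L^{2}(\Omega_{T})}\le C\|\phi\|_{H_{0}^{2}(0,T;\wt{H}^{s+\gamma}(\Omega))}\omega(\alpha)$, which, recalling $(\mathcal{P}_{q}-{\rm Id})(R_{\alpha}\phi)=\mathcal{P}_{q}(R_{\alpha}\phi)-R_{\alpha}\phi$ restricted to $\Omega_{T}$, is the first asserted estimate.

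To conclude, given $\epsilon>0$ I would use that $\omega$ is continuous and strictly increasing near $0$ to pick the unique $\alpha$ with $\omega(\alpha)=\epsilon$ and set $f_{\epsilon}:=R_{\alpha}\phi$; the approximation bound is what was just proved, while the norm bound follows from $\|f_{\epsilon}\|_{L^{2}(0,T;H_{\overline{W}}^{2s})}\le\alpha^{-1}\|\phi\|_{L^{2}(\Omega_{T})}=(\omega^{-1}(\epsilon))^{-1}\|\phi\|_{L^{2}(\Omega_{T})}$ together with the elementary inversion of $\omega(t)=C|\log t|^{-\sigma}$ for small $t$, exactly as in \cite[Remark~3.4]{RS20Calderon}, giving $(\omega^{-1}(\epsilon))^{-1}\le\exp(\wt{C}\epsilon^{-\mu})$ with $\mu=1/\sigma$ and $\wt{C}\ge C^{1/\sigma}$.

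The only genuinely delicate point in this scheme is the self-improving estimate for the tail $r_{\alpha}$: one must feed $r_{\alpha}$ into the quantitative unique continuation inequality of Theorem~\ref{thm:UCP-quantify} while simultaneously exploiting that, because the sum defining $r_{\alpha}$ runs over $\sigma_{j}\le\alpha$, the argument of $\omega$ on the right-hand side already carries the small factor $\alpha$, so that one power of $\|r_{\alpha}\|_{L^{2}(\Omega_{T})}$ can be absorbed into the left; everything else is bookkeeping with Parseval's identity, the duality relations \eqref{eq:dual-space-rel1}--\eqref{eq:dual-space-rel2}, and the inversion of the logarithm. The substantive analytic content, of course, sits upstream in the propagation-of-smallness estimate for the Caffarelli--Silvestre extension that underlies Theorem~\ref{thm:UCP-quantify}, but that is already established.
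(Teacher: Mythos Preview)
Your proposal is correct and follows essentially the same approach as the paper: the singular value decomposition of $\mathcal{P}_{q}-{\rm Id}$, the truncated inverse $R_{\alpha}$, the self-improving tail estimate via Theorem~\ref{thm:UCP-quantify} with $F=r_{\alpha}$, and the final calibration $\epsilon=\omega(\alpha)$ with the inversion of the logarithm are all carried out in the paper exactly as you describe. There is no substantive difference between the two arguments.
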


\subsection{Proof of Theorem~\ref{thm:main-stability}}

Finally, we can prove our logarithmic stability estimate of the inverse problem for the fractional wave equation.
\begin{proof}
	[Proof of Theorem~{\rm \ref{thm:main-stability}}] Let $\epsilon>0$
	be a parameter to be chosen later. We fix two arbitrary functions
	$\phi_{j}\in H^{2}_0(0,T;\wt H^{s+\gamma}(\Omega))$ with $\|\phi_{j}\|_{H^{2}_0(0,T;\wt H^{s+\gamma}(\Omega))}=1$, for $j=1,2$.
	Using Theorem~\ref{thm:Quantative-Runge}, there exist functions $f_{j}\in L^{2}(0,T;H_{\overline{W}}^{2s})$
	such that 
	\[
	\|t_{j}\|_{L^{2}(\Omega_{T})}\le C\epsilon\quad\text{and}\quad\|f_{j}\|_{L^{2}(0,T;H_{\overline{W}}^{2s})}\le C\exp(\wt{C}\epsilon^{-\mu})
	\]
	with 
	\[
	t_{j}=u_{j}-f_{j}-\phi_{j}\quad\text{and}\quad u_{j}=\mathcal{P}_{q_{j}}f_{j},
	\]
	where we have used the fact 
	\[
	\|\phi_{j}\|_{L^{2}(\Omega_{T})}\le\|\phi_{j}\|_{H^{2}_0(0,T;\wt H^{s+\gamma}(\Omega))}=1.
	\]
	Inserting $u_{j}$ into the identity \eqref{eq:crucial-identity}
	in Lemma~\ref{lem:self-adjoint}, we obtain 
	\begin{align*}
		& \int_{\Omega_{T}}(q_{1}-q_{2})\phi_{1}\phi^\ast _{2}\, dxdt\\
		 =&\int_{(\Omega_{e})_{T}}\LC (\Lambda_{q_{1}}-\Lambda_{q_{2}})f_{1}\RC  f^\ast_{2}\, dxdt-\int_{\Omega_{T}}(q_{1}-q_{2})\LC \phi_{2}t^\ast_{1}+\phi_{1}t^\ast_{2}+t_{1}t^\ast_{2}\RC dxdt.
	\end{align*}
	Therefore, 
	\begin{align*}
		& \left|\int_{\Omega_{T}}(q_{1}-q_{2})\phi_{1}\phi ^\ast_{2}\, dxdt\right|\\
		 \le& \left\|\Lambda_{q_{1}}-\Lambda_{q_{2}}\right\|_{\ast}\|f_{1}\|_{L^{2}(0,T;H_{\overline{W}}^{2s})}\|f_{2}\|_{L^{2}(0,T;H_{\overline{W}}^{2s})}\\
		&+2M\LC\|t_{1}\|_{L^{2}(\Omega_{T})}+\|t_{2}\|_{L^{2}(\Omega_{T})}+\|t_{1}\|_{L^{2}(\Omega_{T})}\|t_{2}\|_{L^{2}(\Omega_{T})}\RC\\
		 \le& C^{2}\left\|\Lambda_{q_{1}}-\Lambda_{q_{2}}\right\|_{\ast}\exp\LC 2\wt{C}\epsilon^{-\mu}\RC +4CM\epsilon.
	\end{align*}
	Choosing $\epsilon=\left |\log\LC \|\Lambda_{q_{1}}-\Lambda_{q_{2}}\|_{\ast}\RC\right|^{-\frac{1}{\mu}}$,
	we know that 
	\[
	\left|\int_{\Omega_{T}}(q_{1}-q_{2})\phi_{1}\phi^\ast_{2}\, dxdt\right|\le\omega\LC\left\|\Lambda_{q_{1}}-\Lambda_{q_{2}}\right\|_{\ast}\RC,
	\]
	for some logarithmic modulus of continuity (which is monotone non-decreasing)
	$\omega$. Recalling the definition of the function space $Z^{-s}(\Omega,T)$ in Definition \ref{defi: space Z},  we finally prove the assertion.
\end{proof}

\section{Exponential instability of the inverse problem \label{sec:optimality}}

In the last section of this paper, we demonstrate that the logarithmic stability in Theorem \ref{thm:main-stability} is optimal, by showing the exponential instability phenomenon for the fractional wave equation. The ideas of the construction of the instability are motivated by Mandache's pioneer work \cite{Man01instability}.

\subsection{Matrix representation via an orthonormal basis}
For $r>0$, let $B_r$ be the ball of radius $r>0$ with center at $0$.
First of all, we introduce a set of basis of $L^{2}(B_{3}\setminus\overline{B_{2}})$.
The following proposition can be found in \cite[Lemma 2.1 and Remark 2.2]{RS18Instability}: 
\begin{prop}
	\label{prop:opt-special-basis} Let $n \ge 2$. Given any $m\ge0$, we define 
	\[
	\ell_{m}:=\begin{pmatrix}m+n-1\\
		n-1
	\end{pmatrix}-\begin{pmatrix}m+n-3\\
		n-1
	\end{pmatrix}\le2(1+m)^{n-2}.
	\]
	There exists an orthonormal basis $\left\{ Y_{mk\ell}:\,  m\ge0,k\ge0,0\le\ell\le\ell_{m} \right\}$
	of $L^{2}(B_{3}\setminus\overline{B_{2}})$ such that 
	\[
	\left\|\tilde{Y}_{mk\ell}\right\|_{L^{2}(B_{1})}\le C_{n,s}'e^{-C_{n,s}''(m+k)}
	\]
	for some constant $C_{n,s}'$ and $C_{n,s}''$ (both depending only
	on $n$ and $s$), where $\tilde{Y}_{mk\ell}\in H^{s}(\mathbb{R}^{n})$
	is the unique solution to 
	\[
	\begin{cases}
		(-\Delta)^{s}\tilde{Y}_{mk\ell}=0 & \text{ in }B_{1},\\
		\tilde{Y}_{mk\ell}=\mathbbm1_{B_{3}\setminus\overline{B_{2}}}Y_{mk\ell} & \text{ in }\mathbb{R}^{n}\setminus\overline{B_{1}}.
	\end{cases}
	\]
\end{prop}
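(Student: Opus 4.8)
The plan is to construct the orthonormal basis in two stages: first produce a convenient orthonormal basis of $L^2(B_3\setminus\overline{B_2})$ adapted to the spherical shell geometry, and then control the size in $B_1$ of the harmonic extensions $\widetilde{Y}_{mk\ell}$ of its elements. For the first stage, I would use separation of variables on the annulus $B_3\setminus\overline{B_2}$: write a function as $g(\rho)\,S(\theta)$ in polar coordinates $(\rho,\theta)\in(2,3)\times\mathbb{S}^{n-1}$, take $\{S_{m\ell}\}$ to be spherical harmonics of degree $m$ on $\mathbb{S}^{n-1}$ (with $\ell$ running over the $\binom{m+n-1}{n-1}-\binom{m+n-3}{n-1}$-dimensional eigenspace, giving the stated bound $\ell_m\le 2(1+m)^{n-2}$), and take $\{g_k\}_{k\ge 0}$ to be an orthonormal basis of $L^2((2,3),\rho^{n-1}\,d\rho)$ consisting of polynomials (or trigonometric functions after a change of variable) with a quantitative control on their growth: specifically one wants $g_k$ to extend to a function on $(0,3)$ whose $L^2$-norm on $(0,1)$ decays like $e^{-c k}$. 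This can be arranged, e.g., by using dilated Legendre-type polynomials or by using functions of the form $e^{ik\pi(\rho-2)}$-analogues and invoking analyticity; the key point is that a function oscillating at frequency $\sim k$ on $(2,3)$ and extended harmonically (or simply by its natural analytic continuation) is exponentially small near the origin.

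Second, for the decay estimate I would invoke the Caffarelli--Silvestre / Poisson-kernel representation of the solution $\widetilde{Y}_{mk\ell}$ to the exterior problem $(-\Delta)^s \widetilde{Y}_{mk\ell}=0$ in $B_1$ with exterior data $\mathbbm 1_{B_3\setminus\overline{B_2}}Y_{mk\ell}$. There is an explicit Poisson kernel $P_s(x,y)$ for the ball $B_1$ (due to Riesz/Landkof) giving
\[
\widetilde{Y}_{mk\ell}(x)=\int_{\mathbb{R}^n\setminus\overline{B_1}}P_s(x,y)\,\mathbbm 1_{B_3\setminus\overline{B_2}}(y)\,Y_{mk\ell}(y)\,dy,\qquad x\in B_1,
\]
and this integral can be expanded in spherical harmonics: the $y$-angular integral against $Y_{mk\ell}(\theta_y)$ picks out the degree-$m$ component of $P_s(x,\cdot)$, which in the variable $x\in B_1$ behaves like $|x|^m$ times a smooth radial factor, yielding a factor $C^m$-type smallness after normalization, while the radial integral $\int_2^3 g_k(\rho)(\cdots)\,d\rho$ contributes the $e^{-ck}$ smallness by the oscillation/analytic-continuation argument from the first stage. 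Alternatively, and perhaps more cleanly, I would cite the corresponding computation in \cite{RS18Instability} essentially verbatim, since the potential $q$ and the wave structure play no role here — this is a pure statement about the fractional Laplacian on balls — so the proof reduces to quoting \cite[Lemma 2.1, Remark 2.2]{RS18Instability} and checking that the constants $C_{n,s}'$, $C_{n,s}''$ indeed depend only on $n$ and $s$.

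The main obstacle, if one insists on a self-contained argument rather than a citation, is obtaining the \emph{joint} exponential decay in both $m$ and $k$ with constants uniform over $\ell$. The $m$-decay is the standard interior estimate for $s$-harmonic functions (low-frequency angular modes are killed by the smoothing of the Poisson kernel away from the boundary sphere), but coupling it to the radial $k$-decay requires that the radial basis functions $g_k$ be chosen with a uniform analyticity width independent of $m$, and that the Poisson kernel's dependence on the angular degree $m$ does not degrade the radial estimate. One has to be careful that the normalization $\|Y_{mk\ell}\|_{L^2(B_3\setminus\overline{B_2})}=1$ is in the annulus, not on a sphere, and track the $\rho^{n-1}$ Jacobian weight through both the orthonormalization and the Poisson integral. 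I expect the cleanest route is simply:

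\begin{proof}[Proof of Proposition~{\rm \ref{prop:opt-special-basis}}]
This is precisely \cite[Lemma 2.1 and Remark 2.2]{RS18Instability}; note that the statement concerns only the fractional Laplacian $(-\Delta)^s$ on the fixed annular and ball geometry, so the construction there applies without change, and the constants $C_{n,s}'$ and $C_{n,s}''$ depend only on $n$ and $s$.
\end{proof}
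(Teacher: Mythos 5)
Your final proof matches the paper exactly: the paper states this proposition and simply cites \cite[Lemma 2.1 and Remark 2.2]{RS18Instability} without reproducing the construction, which is precisely what you do. The exploratory discussion preceding your citation is a reasonable sketch of what the cited argument involves, but the paper itself does not carry out any of those steps.
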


\begin{rmk}
For $n=1$, the "sphere" $\partial B_{1}\subset \R$ consists only two end points $\{-1,1\}$, which is no longer a sphere. Therefore, we need to find another basis for the one-dimensional case. We shall discuss the case of $n=1$ later. 
\end{rmk}

Given any bounded linear operator $\mathcal{A}:L^{2}(B_{3}\setminus\overline{B_{2}})\rightarrow L^{2}(B_{3}\setminus\overline{B_{2}})$,
we define 
\[
a_{m_{1}k_{1}\ell_{1}}^{m_{2}k_{2}\ell_{2}}:=\LC \mathcal{A}Y_{m_{1}k_{1}\ell_{1}},Y_{m_{2}k_{2}\ell_{2}}\RC_{L^{2}(B_{3}\setminus\overline{B_{2}})}.
\]
Let $\LC a_{m_{1}k_{1}\ell_{1}}^{m_{2}k_{2}\ell_{2}}\RC$ be the tensor
with entries $a_{m_{1}k_{1}\ell_{1}}^{m_{2}k_{2}\ell_{2}}$, and consider
the following Banach space: 

\[
X:=\left\{ \LC a_{m_{1}k_{1}\ell_{1}}^{m_{2}k_{2}\ell_{2}}\RC :\, \left\|\LC a_{m_{1}k_{1}\ell_{1}}^{m_{2}k_{2}\ell_{2}}\RC \right\|_{X}<\infty \right\},
\]
where 
\begin{equation}
	\left\|\LC  a_{m_{1}k_{1}\ell_{1}}^{m_{2}k_{2}\ell_{2}}\RC \right\|_{X}:=\sup_{m_{i}k_{i}\ell_{i}}\LC 1+\max\{m_{1}+k_{1},m_{2}+k_{2}\}\RC^{n+2}\left|A_{m_{1}k_{1}\ell_{1}}^{m_{2}k_{2}\ell_{2}}\right
	|,\label{eq:opt-X-norm}
\end{equation}
see e.g. \cite[Definition 2.7]{RS18Instability}. The following lemma
can be found in \cite[(20)]{RS18Instability}, which plays an essential role in our work. 
\begin{lem}\label{lem:opt-matrix-repn}
	If $n \ge 2$, then
	\begin{equation*}
		\|\mathcal{A}\|_{L^{2}(B_{3}\setminus\overline{B_{2}})\rightarrow L^{2}(B_{3}\setminus\overline{B_{2}})}\le4\left\|\LC  a_{m_{1}k_{1}\ell_{1}}^{m_{2}k_{2}\ell_{2}}\RC \right\|_{X}
	\end{equation*}
\end{lem}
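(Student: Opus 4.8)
The plan is to bound the operator norm of $\mathcal{A}$ by estimating $|(\mathcal{A}u,v)_{L^2(B_3\setminus\overline{B_2})}|$ for arbitrary $u,v\in L^2(B_3\setminus\overline{B_2})$ with unit norm, expanding both in the orthonormal basis $\{Y_{mk\ell}\}$ from Proposition \ref{prop:opt-special-basis}. Writing $u=\sum_{m_1,k_1,\ell_1} c_{m_1k_1\ell_1}Y_{m_1k_1\ell_1}$ and $v=\sum_{m_2,k_2,\ell_2} d_{m_2k_2\ell_2}Y_{m_2k_2\ell_2}$, bilinearity gives
\[
(\mathcal{A}u,v)_{L^2(B_3\setminus\overline{B_2})}=\sum_{m_i,k_i,\ell_i} c_{m_1k_1\ell_1}\,\overline{d_{m_2k_2\ell_2}}\, a_{m_1k_1\ell_1}^{m_2k_2\ell_2}.
\]
Bounding $|a_{m_1k_1\ell_1}^{m_2k_2\ell_2}|$ by $\bigl(1+\max\{m_1+k_1,m_2+k_2\}\bigr)^{-(n+2)}\|(a_{m_1k_1\ell_1}^{m_2k_2\ell_2})\|_X$, I would reduce the whole estimate to showing that the triple sum
\[
\sum_{m_i,k_i,\ell_i} |c_{m_1k_1\ell_1}|\,|d_{m_2k_2\ell_2}|\,\bigl(1+\max\{m_1+k_1,m_2+k_2\}\bigr)^{-(n+2)}
\]
is at most $4\,\|u\|_{L^2}\,\|v\|_{L^2}$.

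The key combinatorial step is to control the multiplicity factor: for fixed $m$, the index $\ell$ runs over $0\le\ell\le\ell_m$ with $\ell_m\le 2(1+m)^{n-2}$, so summing over $\ell_1$ and $\ell_2$ contributes a factor at most $4(1+m_1)^{n-2}(1+m_2)^{n-2}\le 4\bigl(1+\max\{m_1+k_1,m_2+k_2\}\bigr)^{2(n-2)}$. I would then use the elementary bound $\max\{m_1+k_1,m_2+k_2\}\ge \tfrac12(m_1+k_1+m_2+k_2)$ together with Cauchy--Schwarz in the remaining indices. Since $\sum_{\ell_1}|c_{m_1k_1\ell_1}|\le (1+\ell_{m_1})^{1/2}\bigl(\sum_{\ell_1}|c_{m_1k_1\ell_1}|^2\bigr)^{1/2}$ and similarly for $d$, the $\ell$-multiplicities combine with the polynomial weight; choosing the exponent $n+2$ is exactly what makes the residual series $\sum_{m,k\ge 0}(1+m+k)^{-2}(1+m)^{n-2}$ (or a comparable one) converge with total mass controlled so that the constant $4$ comes out. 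After Cauchy--Schwarz the $\ell^2$ norms of the coefficient sequences reassemble into $\|u\|_{L^2}\|v\|_{L^2}=1$, giving the claim; taking the supremum over $u,v$ then yields $\|\mathcal{A}\|_{L^2(B_3\setminus\overline{B_2})\rightarrow L^2(B_3\setminus\overline{B_2})}\le 4\|(a_{m_1k_1\ell_1}^{m_2k_2\ell_2})\|_X$.

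The main obstacle is bookkeeping the multiplicity factors $\ell_m\sim (1+m)^{n-2}$ against the polynomial weight of order $n+2$ and verifying that the constant is genuinely $4$ rather than merely a dimensional constant; this is where the precise form of the norm \eqref{eq:opt-X-norm} and the precise choice of summation order matter. Since this is essentially the content of \cite[(20)]{RS18Instability}, I would follow that argument, taking care only to track how the weight exponent $n+2$ interacts with the $(n-2)$-growth of $\ell_m$ and the two Cauchy--Schwarz applications. The case $n=1$ is deliberately excluded here (the sphere degenerates), and is handled separately later with a different basis.
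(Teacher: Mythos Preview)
Your bilinear-form approach is different from the one the paper has in mind. The argument in \cite[(20)]{RS18Instability}, which the paper cites here and later reproduces explicitly in the one-dimensional analogue (Lemma~\ref{lem:1-norm-change}), bypasses the expansion of $(\mathcal{A}u,v)$ altogether and simply dominates the operator norm by the Hilbert--Schmidt norm:
\[
\|\mathcal{A}\|_{L^2\to L^2}^2 \le \sum_{m_i,k_i,\ell_i} \bigl|a_{m_1k_1\ell_1}^{m_2k_2\ell_2}\bigr|^2 \le \bigl\|(a_{m_1k_1\ell_1}^{m_2k_2\ell_2})\bigr\|_X^2 \sum_{m_i,k_i,\ell_i} \bigl(1+\max\{m_1+k_1,m_2+k_2\}\bigr)^{-2(n+2)}.
\]
Grouping the last series by $\sigma=\max\{m_1+k_1,m_2+k_2\}$ and using the elementary count $N_\sigma\le 8(1+\sigma)^{2n-1}$ (the same one carried out in Step~4 of Lemma~\ref{lem:opt-delta-net}) gives $8\sum_{j\ge 1}j^{-5}<16$, whence the constant~$4$. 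No Cauchy--Schwarz or balancing against multiplicity factors is needed.

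Your route can also be made to work, but the bookkeeping in the sketch is inconsistent: you first say the $\ell$-sums contribute a factor $4(1+m_1)^{n-2}(1+m_2)^{n-2}$, which is the bound for summing a \emph{constant} over $\ell_1,\ell_2$; yet the summand contains $|c_{m_1k_1\ell_1}||d_{m_2k_2\ell_2}|$, and when you subsequently apply Cauchy--Schwarz in $\ell$ you only extract $(1+\ell_m)^{1/2}$ factors. These two pieces do not fit together as written. A clean way to organize the bilinear argument is to first use $(1+\max\{a,b\})^{-(n+2)}\le (1+a)^{-(n+2)/2}(1+b)^{-(n+2)/2}$ to factor the double sum, and only then apply Cauchy--Schwarz once in each factor. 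That said, the Hilbert--Schmidt route is shorter and makes the constant~$4$ drop out without any delicate tracking.
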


Thanks to Lemma~\ref{lem:opt-matrix-repn}, we can regard the
tensor $\LC  a_{m_{1}k_{1}\ell_{1}}^{m_{2}k_{2}\ell_{2}}(q)\RC $ as the
\emph{matrix representation} of the bounded linear operator $\mathcal{A}$. 

\subsection{Special weak solutions}

In view of Proposition~\ref{prop:opt-special-basis}, we need to introduce some special solutions. We begin with the following lemma. 
\begin{lem}
\label{lem:patch2-veryweak-wellposed}Let $\chi=\chi(t)\in C_{c}^{\infty}((0,T))$
and 
\[
q\in B_{+,R}^{\infty}:=\left\{ q\text{ is real-valued}:\,  0\le q\le R\text{ a.e.}\right\} .
\]
Given any $f=f(x)\in L^{2}(B_{3}\setminus\overline{B_{2}})$, there
exists a unique solution $u$ to 
\begin{equation}
\begin{cases}
\LC \partial_{t}^{2}+(-\Delta)^{s}+q\RC u=0 & \text{in}\;\;(B_{1})_{T},\\
u(x,t)=\chi(t)\mathbbm1_{B_{3}\setminus\overline{B_{2}}}(x)f(x) & \text{in}\;\;(\mathbb{R}^{n}\setminus\overline{B_{1}})_{T},\\
u=\partial_{t}u=0 & \text{in}\;\;\mathbb{R}^{n}\times\{0\},
\end{cases}\label{eq:patch2-veryweak1a}
\end{equation}
and  
\begin{equation}
\|u\|_{L^{\infty}(0,T;L^{2}(B_{1}))}\le C_{R,T,n,s}\|\chi\|_{W^{2,\infty}(0,T)}\|f\|_{L^{2}(B_{3}\setminus\overline{B}_{2})}\label{eq:patch2-veryweak1b}
\end{equation}
for some positive constant $C_{R,T,n,s}$. 
\end{lem}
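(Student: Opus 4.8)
The plan is to reduce \eqref{eq:patch2-veryweak1a} to the zero-exterior-data problem \eqref{fractional wave zero exterior} on $\Omega=B_{1}$ and then invoke Theorem~\ref{thm:well-posedness}. Write $g(x,t):=\chi(t)\mathbbm1_{B_{3}\setminus\overline{B_{2}}}(x)f(x)$ for the prescribed exterior datum and set $v:=u-g$. Since $\chi\in C_{c}^{\infty}((0,T))$ we have $g=\partial_{t}g=0$ on $\mathbb{R}^{n}\times\{0\}$, and $g$ vanishes identically on $(B_{1})_{T}$ because $g(\cdot,t)$ is supported in the annulus $B_{3}\setminus\overline{B_{2}}$, which is disjoint from $B_{1}$. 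Consequently the candidate $v$ should solve \eqref{fractional wave zero exterior} with $\Omega=B_{1}$, zero exterior data, zero initial data, and source $\widetilde F:=-\big[(\partial_{t}^{2}+(-\Delta)^{s}+q)g\big]\big|_{(B_{1})_{T}}$.

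The first step is to identify $\widetilde F$ and show it lies in $L^{2}(0,T;L^{2}(B_{1}))$ with the right bound. On $(B_{1})_{T}$ the pointwise terms $\partial_{t}^{2}g$ and $qg$ both vanish, again because $g(\cdot,t)\equiv0$ on $B_{1}$, so $\widetilde F=-\chi(t)\,\big[(-\Delta)^{s}h\big]\big|_{B_{1}}$ with $h:=\mathbbm1_{B_{3}\setminus\overline{B_{2}}}f\in L^{2}(\mathbb{R}^{n})$. Although $h\notin H^{s}(\mathbb{R}^{n})$ in general, its fractional Laplacian is smooth on $B_{1}$ because $\operatorname{supp}h$ is at positive distance from $\overline{B_{1}}$: for $x\in\overline{B_{1}}$ one has $h(x)=0$, whence from the singular-integral definition
\[
\big[(-\Delta)^{s}h\big](x)=-C_{n,s}\int_{B_{3}\setminus\overline{B_{2}}}\frac{h(y)}{|x-y|^{n+2s}}\,dy ,
\]
and since $|x-y|\ge1$ on $\overline{B_{1}}\times(B_{3}\setminus\overline{B_{2}})$ the kernel is bounded, so $\|(-\Delta)^{s}h\|_{L^{\infty}(B_{1})}\le C_{n,s}\|h\|_{L^{1}}\le C_{n,s}\|f\|_{L^{2}(B_{3}\setminus\overline{B_{2}})}$. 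Therefore $\widetilde F\in L^{2}(0,T;L^{2}(B_{1}))$ with $\|\widetilde F\|_{L^{2}(0,T;L^{2}(B_{1}))}\le\sqrt{T}\,\|\chi\|_{L^{\infty}(0,T)}\,C_{n,s}\|f\|_{L^{2}(B_{3}\setminus\overline{B_{2}})}\le C_{T,n,s}\|\chi\|_{W^{2,\infty}(0,T)}\|f\|_{L^{2}(B_{3}\setminus\overline{B_{2}})}$.

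The second step is to apply Theorem~\ref{thm:well-posedness} with $\Omega=B_{1}$, $\widetilde\varphi=\widetilde\psi=0$, and $M=R$ (legitimate since $0\le q\le R$ gives $\|q\|_{L^{\infty}(B_{1})}\le R$), obtaining a unique weak solution $v\in L^{2}(0,T;\widetilde H^{s}(B_{1}))\cap H^{1}(0,T;L^{2}(B_{1}))$ with
\[
\|v\|_{L^{\infty}(0,T;\widetilde H^{s}(B_{1}))}+\|\partial_{t}v\|_{L^{\infty}(0,T;L^{2}(B_{1}))}\le C_{R,T,n,s}\|\widetilde F\|_{L^{2}(0,T;L^{2}(B_{1}))}.
\]
Set $u:=v+g$. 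The exterior and initial conditions in \eqref{eq:patch2-veryweak1a} hold because $v=0$ in $(\mathbb{R}^{n}\setminus\overline{B_{1}})_{T}$ and $v=\partial_{t}v=0=g=\partial_{t}g$ at $t=0$, while $(\partial_{t}^{2}+(-\Delta)^{s}+q)u|_{(B_{1})_{T}}=\widetilde F+\big[(\partial_{t}^{2}+(-\Delta)^{s}+q)g\big]\big|_{(B_{1})_{T}}=0$. Since $g\equiv0$ on $(B_{1})_{T}$ we have $u=v$ there, so using $\widetilde H^{s}(B_{1})\hookrightarrow L^{2}(B_{1})$,
\[
\|u\|_{L^{\infty}(0,T;L^{2}(B_{1}))}=\|v\|_{L^{\infty}(0,T;L^{2}(B_{1}))}\le\|v\|_{L^{\infty}(0,T;\widetilde H^{s}(B_{1}))}\le C_{R,T,n,s}\|\chi\|_{W^{2,\infty}(0,T)}\|f\|_{L^{2}(B_{3}\setminus\overline{B_{2}})},
\]
which is \eqref{eq:patch2-veryweak1b}. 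Uniqueness is inherited: any other solution $\tilde u$ yields $\tilde u-g$ solving the same zero-exterior problem with the same source and data, hence $\tilde u-g=v$ by Theorem~\ref{thm:well-posedness}.

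The \emph{main obstacle}—really a point requiring care rather than a genuine difficulty—is that $g$ does not belong to $L^{2}(0,T;H^{s}(\mathbb{R}^{n}))$, so Corollary~\ref{thm:well} cannot be applied directly; the substitution $v=u-g$ is precisely what bypasses this, since only $\widetilde F$ (supported in $\overline{B_{1}}$, away from $\operatorname{supp}g$) enters Theorem~\ref{thm:well-posedness}, and there the needed $L^{2}$-in-space regularity is automatic from the smoothing of $(-\Delta)^{s}$ across the gap between $B_{1}$ and $B_{3}\setminus\overline{B_{2}}$. One should also fix the distributional meaning of ``$u$ solves \eqref{eq:patch2-veryweak1a}'': with $u=v+g$ and $v$ a weak solution in the sense of Definition~\ref{def:weak-soln}, each line of \eqref{eq:patch2-veryweak1a} has an unambiguous interpretation, and $(-\Delta)^{s}u|_{(B_{1})_{T}}=(-\Delta)^{s}v|_{(B_{1})_{T}}+\chi(t)\big[(-\Delta)^{s}h\big]|_{B_{1}}$ is a well-defined element of $L^{2}(0,T;H^{-s}(B_{1}))$.
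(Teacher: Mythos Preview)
Your argument is correct, and the overall strategy---subtract a reference function carrying the exterior data, then invoke Theorem~\ref{thm:well-posedness} on the zero-exterior problem---is the same as the paper's. The difference lies in the choice of reference function. The paper subtracts $\chi(t)\tilde f(x)$, where $\tilde f$ is the $s$-harmonic extension of $\mathbbm1_{B_3\setminus\overline{B_2}}f$ into $B_1$ (so $(-\Delta)^s\tilde f=0$ in $B_1$); the resulting source is $F=-(\chi''+\chi q)\tilde f$, controlled via the bound $\|\tilde f\|_{L^2(\mathbb R^n)}\le C_{n,s}\|f\|_{L^2(B_3\setminus\overline{B_2})}$ quoted from \cite[Remark~2.2]{RS18Instability}. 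You instead subtract the raw datum $g=\chi\,\mathbbm1_{B_3\setminus\overline{B_2}}f$, which vanishes on $B_1$; your source becomes $\widetilde F=-\chi\,[(-\Delta)^s h]|_{B_1}$, and you control it by the off-diagonal smoothing of the singular integral across the unit gap between $B_1$ and $B_3\setminus\overline{B_2}$. Your route is slightly more self-contained for this lemma, since it avoids the $s$-harmonic extension estimate; on the other hand, the paper's decomposition $u=v+\chi\tilde f$ is reused verbatim in the proof of Lemma~\ref{lem:patch2-crucial-regularity-DN} and feeds directly into the exponential bound \eqref{lem:opt-basic-est} via Proposition~\ref{prop:opt-special-basis}, so their choice pays off downstream.
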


\begin{proof}
[Proof of Lemma~{\rm \ref{lem:patch2-veryweak-wellposed}}]Recall from \cite[Remark 2.2]{RS18Instability},
there exists a unique solution $\tilde{f}$ to 
\begin{equation}
\begin{cases}
(-\Delta)^{s}\tilde{f}=0 & \text{in}\,\,B_{1},\\
\tilde{f}=\mathbbm1_{B_{3}\setminus\overline{B_{2}}}f & \text{in}\,\,\mathbb{R}^{n}\setminus\overline{B_{1}},
\end{cases}\label{eq:patch2-tildef-def}
\end{equation}
such that 
\begin{equation}
\|\tilde{f}\|_{L^{2}(\mathbb{R}^{n})}\le C_{n,s}\|f\|_{L^{2}(B_{3}\setminus\overline{B}_{2})}\label{eq:patch2-veryweak2}
\end{equation}
for some constant $C_{n,s}>0$. Let $F(x,t):=-(\chi''(t)+\chi(t)q(x))\tilde{f}(x).$
Since $F\in L^{2}(B_{1}\times(0,T))$, using Theorem~\ref{thm:well-posedness},
there exists a unique solution $v$ to 
\begin{equation}
\begin{cases}
\LC \partial_{t}^{2}+(-\Delta)^{s}+q\RC v=F & \text{in}\;\;(B_{1})_{T},\\
v=0 & \text{in}\,\,(\mathbb{R}^{n}\setminus\overline{B_{1}})_{T},\\
v=\partial_{t}v=0 & \text{in}\;\;\mathbb{R}^{n}\times\{0\},
\end{cases}\label{eq:patch2-veryweak3a}
\end{equation}
satisfying 
\begin{equation}
\|v\|_{L^{\infty}(0,T;\tilde{H}^{s}(B_{1}))}+\|\partial_{t}v\|_{L^{\infty}(0,T;L^{2}(B_{1}))}\le C_{R,T,n,s}\|F\|_{L^{2}(B_{1}\times(0,T))}.\label{eq:patch2-veryweak3b}
\end{equation}
In other words, 
\begin{equation}
u(x,t):=v(x,t)+\chi(t)\tilde{f}(x)\label{eq:patch2-veryweak4}
\end{equation}
is the unique solution to \eqref{eq:patch2-veryweak1a}. Therefore,
we can obtain from \eqref{eq:patch2-veryweak3b} that 
\begin{align}\label{eq:patch2-veryweak5}
\begin{split}
	 \|u\|_{L^{\infty}(0,T;L^{2}(B_{1}))}&\le\|v\|_{L^{\infty}(0,T;L^{2}(B_{1}))}+\|\chi\|_{L^{\infty}(0,T)}\|\tilde{f}\|_{L^{2}(B_{1})} \\
	& \le C_{R,T,n,s}\|\chi\|_{W^{2,\infty}(0,T)}\|\tilde{f}\|_{L^{2}(B_{1})}.
\end{split}
\end{align}
Combining \eqref{eq:patch2-veryweak2} and \eqref{eq:patch2-veryweak5} implies \eqref{eq:patch2-veryweak1b}. 
\end{proof}
Based on Lemma~\ref{lem:patch2-veryweak-wellposed}, we can define the DN map
\begin{equation}
(\Lambda_{q}\chi)(f)=\Lambda_{q}(\chi f):=(-\Delta)^{s}u|_{(\mathbb{R}^{n}\setminus\overline{B_{1}})_{T}}\quad\text{for all}\,\,f\in L^{2}(B_{3}\setminus\overline{B_{2}}),\label{eq:patch2-weak-DN-map}
\end{equation}
where $u$ is given in \eqref{eq:patch2-veryweak1a}. In view of \eqref{eq:patch2-veryweak1b},
we know that 
\begin{equation}
\Lambda_{q}\chi:L^{2}(B_{3}\setminus\overline{B}_{2})\rightarrow L^{\infty}(0,T;H^{-2s}(B_{3}\setminus\overline{B_{2}}))\label{eq:patch2-DN-map-regularity1}
\end{equation}
is a bounded operator. However, the regularity given in \eqref{eq:patch2-DN-map-regularity1}
is insufficient for our purpose. In the following lemma, we improve
\eqref{eq:patch2-DN-map-regularity1} by modifying the ideas in \cite[Remark 2.5]{RS18Instability}. 
\begin{lem}
\label{lem:patch2-crucial-regularity-DN}The operator $\Lambda_{q}\chi:L^{2}(B_{3}\setminus\overline{B}_{2})\rightarrow L^{\infty}(0,T;L^{2}(B_{3}\setminus\overline{B_{2}}))$
is bounded. Precisely, the following estimate holds:
\begin{equation}
\|(\Lambda_{q}\chi)(f)\|_{L^{\infty}(0,T;L^{2}(B_{3}\setminus\overline{B_{2}}))}\le C_{R,T,n,s}\|\chi\|_{W^{2,\infty}(0,T)}\|f\|_{L^{2}(B_{3}\setminus\overline{B}_{2})}\label{eq:patch-improve-regularity1}
\end{equation}
for all $f\in L^{2}(B_{3}\setminus\overline{B}_{2})$. 
\end{lem}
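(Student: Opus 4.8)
The plan is to decompose the solution $u$ from Lemma~\ref{lem:patch2-veryweak-wellposed} as $u = v + \chi\tilde f$ (as in \eqref{eq:patch2-veryweak4}), where $\tilde f$ is the $s$-harmonic extension from \eqref{eq:patch2-tildef-def} and $v$ solves the interior problem \eqref{eq:patch2-veryweak3a} with source $F=-(\chi''+\chi q)\tilde f$. Applying this decomposition, on $(\mathbb{R}^n\setminus\overline{B_1})_T$ we have $(-\Delta)^s u = (-\Delta)^s v + \chi(-\Delta)^s\tilde f$. Since $\tilde f = \mathbbm1_{B_3\setminus\overline{B_2}}f$ is supported away from $B_1$, the term $(-\Delta)^s\tilde f$ restricted to $B_3\setminus\overline{B_2}$ is simply controlled by $\|f\|_{L^2(B_3\setminus\overline{B_2})}$ up to a constant — actually we should be slightly careful: $(-\Delta)^s\tilde f$ on $B_3\setminus\overline{B_2}$ is not just $(-\Delta)^s(\mathbbm1 f)$ because $\tilde f\ne 0$ inside $B_1$, but the contribution from the $B_1$-part of $\tilde f$ to the value of $(-\Delta)^s\tilde f$ at a point $x\in B_3\setminus\overline{B_2}$ is given by the integral $\int_{B_1}\frac{\tilde f(y)}{|x-y|^{n+2s}}\,dy$, which is a smoothing operator with kernel bounded uniformly (since $\dist(B_1, B_3\setminus\overline{B_2})\ge 1$); hence this piece is bounded from $L^2(B_1)\to L^2(B_3\setminus\overline{B_2})$, and we use \eqref{eq:patch2-veryweak2} to bound $\|\tilde f\|_{L^2(B_1)}$.

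The main work, and the main obstacle, is controlling $(-\Delta)^s v$ on $B_3\setminus\overline{B_2}$. Here we exploit that $v$ is supported in $\overline{B_1}$ for all $t$ (by the exterior condition in \eqref{eq:patch2-veryweak3a}), so for $x\in B_3\setminus\overline{B_2}$,
\[
(-\Delta)^s v(x,t) = c_{n,s}\,\mathrm{p.v.}\!\int_{\mathbb{R}^n}\frac{v(x,t)-v(y,t)}{|x-y|^{n+2s}}\,dy = -c_{n,s}\int_{B_1}\frac{v(y,t)}{|x-y|^{n+2s}}\,dy,
\]
and again $|x-y|\ge 1$ uniformly, so the kernel $K(x,y)=c_{n,s}|x-y|^{-n-2s}\mathbbm1_{\{x\in B_3\setminus\overline{B_2},\,y\in B_1\}}$ is bounded and compactly supported, giving a bounded (indeed Hilbert--Schmidt) operator $L^2(B_1)\to L^2(B_3\setminus\overline{B_2})$. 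Therefore
\[
\|(-\Delta)^s v(t)\|_{L^2(B_3\setminus\overline{B_2})} \le C_{n,s}\|v(t)\|_{L^2(B_1)} \le C_{n,s}\|v\|_{L^\infty(0,T;L^2(B_1))},
\]
and the energy estimate \eqref{eq:patch2-veryweak3b} bounds the right-hand side by $C_{R,T,n,s}\|F\|_{L^2((B_1)_T)} \le C_{R,T,n,s}\|\chi\|_{W^{2,\infty}(0,T)}(1+R)\|\tilde f\|_{L^2(B_1)}$, which by \eqref{eq:patch2-veryweak2} is $\le C_{R,T,n,s}\|\chi\|_{W^{2,\infty}(0,T)}\|f\|_{L^2(B_3\setminus\overline{B_2})}$.

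Combining the two contributions and taking the supremum over $t\in(0,T)$ yields \eqref{eq:patch-improve-regularity1}. The only subtlety to state carefully is the integral representation of $(-\Delta)^s$ acting on a function supported away from the evaluation region — this is a standard nonlocal-to-local smoothing phenomenon, but since we are using the singular-integral definition of $(-\Delta)^s$ we should note that the principal value is actually an absolutely convergent integral here because the singularity at $x=y$ never occurs (the supports are separated), so no delicate cancellation is needed. I expect this separation-of-supports observation to be the crux; everything else is a direct combination of \eqref{eq:patch2-veryweak2} and \eqref{eq:patch2-veryweak3b}.
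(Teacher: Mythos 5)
Your decomposition $u=v+\chi\tilde f$ and the separation-of-supports argument for $(-\Delta)^s v$ are exactly what the paper does, and that part is correct. The gap is in your handling of the $\chi(-\Delta)^s\tilde f$ term. When you evaluate $(-\Delta)^s\tilde f(x)$ for $x\in B_3\setminus\overline{B_2}$, you correctly identify the contribution from $\tilde f|_{B_1}$ as a bounded smoothing operator (separated supports), but you never address the contribution from $\tilde f|_{B_3\setminus\overline{B_2}}=f$ itself, namely
\[
c_{n,s}\,\mathrm{p.v.}\!\int_{B_3\setminus\overline{B_2}}\frac{f(x)-f(y)}{|x-y|^{n+2s}}\,dy ,\qquad x\in B_3\setminus\overline{B_2}.
\]
Here the supports of the evaluation point and the integration variable coincide, so this is a genuinely singular operator of order $2s$: for $f\in L^2(B_3\setminus\overline{B_2})$ it produces an element of $H^{-2s}$ and in general fails to lie in $L^2$, so it cannot be absorbed into the claimed bound \eqref{eq:patch-improve-regularity1}. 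Your opening sentence that ``$(-\Delta)^s\tilde f$ restricted to $B_3\setminus\overline{B_2}$ is simply controlled by $\|f\|_{L^2}$'' is false, and the ``slightly careful'' aside only patches the harmless $B_1$-contribution, not this one.

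The paper's proof never faces this term because it opens by asserting $(\Lambda_q\chi)(f)=(-\Delta)^s v|_{(\mathbb{R}^n\setminus\overline{B_1})_T}$, i.e.\ the $\chi(-\Delta)^s\tilde f$ piece is dropped from the measured quantity before any estimate is made; what remains is precisely the disjoint-supports bound you already carried out for $(-\Delta)^s v$, followed by \eqref{eq:patch2-veryweak3b} and \eqref{eq:patch2-veryweak2}. That identity is the crux. Note it is not a mere algebraic consequence of $u=v+\chi\tilde f$ together with the pointwise formula $\Lambda_q f=(-\Delta)^s u|_{\Omega_e\times(0,T)}$, since $(-\Delta)^s\tilde f$ vanishes only \emph{inside} $B_1$; the paper is following the normalization in \cite[Remark 2.5]{RS18Instability}, and in the quantity actually used downstream, $\Gamma(q)=\chi\Lambda_q\chi-\chi\Lambda_0\chi$, the $q$-independent term $\chi^2(-\Delta)^s\tilde f$ cancels in any case. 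The takeaway: do not try to bound $(-\Delta)^s\tilde f$ on $B_3\setminus\overline{B_2}$ --- it cannot be bounded in $L^2$ --- but rather observe (or adopt the convention) that this term does not enter the DN map output being estimated.
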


\begin{proof}
From \eqref{eq:patch2-tildef-def}, \eqref{eq:patch2-veryweak4},
and \eqref{eq:patch2-weak-DN-map}, it follows that 
\[
(\Lambda_{q}\chi)(f)=(-\Delta)^{s}v|_{(\mathbb{R}^{n}\setminus\overline{B_{1}})_{T}}, \quad\text{for all}\,\,f\in L^{2}(B_{3}\setminus\overline{B_{2}}),
\]
where $v$ solves \eqref{eq:patch2-veryweak3a}.
Using the equivalent definition of $(-\Delta)^{s}$ via the singular integral,
see e.g. \cite[Definition 2.5, Theorem 5.3]{kwasnicki2017ten}, we obtain that for
each $x\in \R^n \setminus \overline{B_1}$
\[
|(-\Delta)^{s}v(x,t)|=C_{n,s}\left|\int_{\mathbb{R}^{n}}\frac{v(x,t)-v(y,t)}{|x-y|^{n+2s}}\,dy\right|=C_{n,s}\left|\int_{B_{1}}\frac{v(y,t)}{|x-y|^{n+2s}}\,dy\right|
\]
for some positive constant $C_{n,s}$. Since 
\[
|x-y|^{n+2s}\ge1, \quad\text{for all}\,\,x\in B_{3}\setminus\overline{B_{2}},y\in B_{1},
\]
we immediately observe that 
\[
|(-\Delta)^{s}v(x,t)|\le C_{n,s}\left|\int_{B_{1}}v(y,t)\,dy\right|\le C_{n,s}\|v(\bullet,t)\|_{L^{2}(B_{1})}.
\]
Hence, with the help of  \eqref{eq:patch2-veryweak5} and \eqref{eq:patch2-veryweak2}, we have
\begin{align*}
  &\|(\Lambda_{q}\chi)(f)\|_{L^{\infty}(0,T;L^{2}(B_{3}\setminus\overline{B_{2}}))} \\=&\|(-\Delta)^{s}v\|_{L^{\infty}(0,T;L^{2}(B_{3}\setminus\overline{B_{2}}))}\\
 \le& C_{n,s}\|v\|_{L^{\infty}(0,T;L^{2}(B_{1}))}\\
 \le & C_{R,T,n,s}\|\chi\|_{W^{2,\infty}(0,T)}\|\tilde{f}\|_{L^{2}(B_{1})}\\
 \le& C_{R,T,n,s}\|\chi\|_{W^{2,\infty}(0,T)}\|f\|_{L^{2}(B_{3}\setminus\overline{B}_{2})},
\end{align*}
which proves the lemma.
\end{proof}
We now apply Lemma~\ref{lem:patch2-veryweak-wellposed} with the exterior Dirichlet data $f=Y_{mk\ell}$. Since $\{Y_{mk\ell}\}$
is an orthonormal basis of $L^{2}(B_{3}\setminus\overline{B}_{2})$,
by Lemma~\ref{lem:patch2-veryweak-wellposed}, there exists a
unique solution $u_{mk\ell}$ to 
\begin{equation}
\begin{cases}
\LC \partial_{t}^{2}+(-\Delta)^{s}+q\RC  u_{mk\ell}=0 & \text{in}\;\;(B_{1})_{T},\\
u_{mk\ell}(x,t)=\chi(t)\mathbbm1_{B_{3}\setminus\overline{B_{2}}}(x)Y_{mk\ell}(x) & \text{in}\;\;(\mathbb{R}^{n}\setminus\overline{B_{1}})_{T},\\
u_{mk\ell}=\partial_{t}u_{mk\ell}=0 & \text{in}\;\;\mathbb{R}^{n}\times\{0\},
\end{cases}\label{eq:opt-well-posed-weak1}
\end{equation}
satisfying 
\begin{equation}
\|u_{mk\ell}\|_{L^{\infty}(0,T;L^{2}(B_{1}))}\le C_{T,R,n,s}\|\chi\|_{W^{2,\infty}(0,T)}.\label{eq:patch2-well-notsharp}
\end{equation}
Thanks to Proposition~\ref{prop:opt-special-basis}, \eqref{eq:patch2-well-notsharp} can be improved to  
\begin{equation}\label{lem:opt-basic-est}
\|u_{mk\ell}\|_{L^{\infty}(0,T;L^{2}(B_{1}))}\le C_{T,R,n,s}\|\chi\|_{W^{2,\infty}(0,T)}e^{-c_{n,s}(m+k)}
\end{equation}

\begin{rmk}
Similarly, we can let $\mathring{u}_{mk\ell}$ be the solution to
\eqref{eq:opt-well-posed-weak1} with respect to $q\equiv0$ (i.e. $R=0$), we
have 
\[
\|u_{mk\ell}\|_{L^{\infty}(0,T;L^{2}(B_{1}))}\le C_{T,n,s}\|\chi\|_{W^{2,\infty}(0,T)}e^{-c_{n,s}(m+k)}.
\]
\end{rmk}

\subsection{Matrix representation}

We consider the mapping 
\[
\Gamma(q)(f):=\chi\Lambda_{q}(\chi f)-\chi\Lambda_{0}(\chi f)\quad\text{ for }\;f\in C_{c}^{\infty}((\mathbb{R}^{n}\setminus\overline{B_{1}})_{T}),
\]
where $\Lambda_{0}$ is the DN map \eqref{eq:hyperbolic-DN} with
$q\equiv0$. We define
\begin{align}\label{continuous differences}
	\Gamma_{m_{1}k_{1}\ell_{1}}^{m_{2}k_{2}\ell_{2}}(q)(t):= \LC \Gamma(q)(Y_{m_{1}k_{1}\ell_{1}}),Y_{m_{2}k_{2}\ell_{2}}\RC _{L^{2}(B_{3}\setminus\overline{B_{2}})}(t).
\end{align}
Since $\Lambda_{q}$ is self-adjoint, then \eqref{continuous differences} infers that 
\begin{equation*}
	\Gamma_{m_{1}k_{1}\ell_{1}}^{m_{2}k_{2}\ell_{2}}(q)(t)=\Gamma_{m_{2}k_{2}\ell_{2}}^{m_{1}k_{1}\ell_{1}}(q)(t).
\end{equation*}

We have the following estimate for the tensor. 
\begin{lem}\label{lem:opt-matrix-repn-est}
	For $n \ge 2$ and given $q\in B_{+,R}^{\infty}$,
	there exist constants $C'_{R,T,n,s}>1$ and $c'_{n,s}>0$ such that
	\begin{equation}\label{eq:opt-matrix-repn-est3}
		\sup_{t\in(0,T)}\left|\Gamma_{m_{1}k_{1}\ell_{1}}^{m_{2}k_{2}\ell_{2}}(q)(t)\right|\le C'_{R,T,n,s}\|\chi\|_{W^{2,\infty}(0,T)}^{2} e^{-c'_{n,s}\sigma}
	\end{equation}
	where $	\sigma:=\max\left\{m_{1}+k_{1},m_{2}+k_{2}\right\}$.
\end{lem}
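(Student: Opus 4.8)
The plan is to estimate $\Gamma_{m_{1}k_{1}\ell_{1}}^{m_{2}k_{2}\ell_{2}}(q)(t)$ directly from its definition by exploiting the exponential smallness of the special solutions $u_{mk\ell}$ near $B_1$, which is precisely the content of \eqref{lem:opt-basic-est}. First I would rewrite the tensor entry using the integral identity \eqref{eq:crucial-identity} from Lemma~\ref{lem:self-adjoint}, applied on the domain $\Omega=B_1$ with the exterior sets being (contained in) $B_3\setminus\overline{B_2}$, but now in the fixed-time formulation of Lemma~\ref{lem:patch2-veryweak-wellposed}. Concretely, since $\Gamma(q)(f)=\chi\Lambda_q(\chi f)-\chi\Lambda_0(\chi f)$, pairing against $Y_{m_2k_2\ell_2}$ and using the analogue of Lemma~\ref{lem:self-adjoint} for the operators $\Lambda_q\chi$ and $\Lambda_0\chi$ should express $\Gamma_{m_{1}k_{1}\ell_{1}}^{m_{2}k_{2}\ell_{2}}(q)(t)$ as an integral over $(B_1)_T$ of $q$ times a product of the solution $u_{m_1k_1\ell_1}$ (for potential $q$) and the time-reflected solution $\mathring{u}_{m_2k_2\ell_2}^\ast$ (for potential $0$), together with the $\chi$ factors. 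The key point is that both factors live on $B_1$, where \eqref{lem:opt-basic-est} and its $q\equiv0$ analogue from the Remark give exponential decay $e^{-c_{n,s}(m_i+k_i)}$.

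The main steps, in order, are: (i) establish the integral identity connecting $\Gamma_{m_{1}k_{1}\ell_{1}}^{m_{2}k_{2}\ell_{2}}(q)(t)$ to $\int_{(B_1)_T} q\, u_{m_1k_1\ell_1}\,\mathring{u}_{m_2k_2\ell_2}^\ast\,dx\,dt$ (with appropriate $\chi$'s and a sign) — this is the analogue of Lemma~\ref{lem:self-adjoint} adapted to the setting of Lemma~\ref{lem:patch2-veryweak-wellposed}, and it follows by the same integration-by-parts computation used there, since $u_{m_1k_1\ell_1}-u_{m_2k_2\ell_2}^{(0)}$ and the relevant functions are supported appropriately and vanish at $t=0$ (resp.\ $t=T$ after reflection); (ii) bound the resulting integral by $\|q\|_{L^\infty(B_1)}\le R$ times $\|u_{m_1k_1\ell_1}\|_{L^\infty(0,T;L^2(B_1))}\|\mathring{u}_{m_2k_2\ell_2}\|_{L^\infty(0,T;L^2(B_1))}$ times $T$; (iii) insert the exponential estimate \eqref{lem:opt-basic-est} for $u_{m_1k_1\ell_1}$ and its $R=0$ counterpart for $\mathring{u}_{m_2k_2\ell_2}$, obtaining a factor $e^{-c_{n,s}(m_1+k_1)}e^{-c_{n,s}(m_2+k_2)}\le e^{-c_{n,s}\sigma}$; (iv) collect the $\|\chi\|_{W^{2,\infty}(0,T)}$ factors — one from each solution estimate — giving $\|\chi\|_{W^{2,\infty}(0,T)}^2$, and absorb $T$, $C_{T,R,n,s}$ into the constant $C'_{R,T,n,s}$ and set $c'_{n,s}:=c_{n,s}$.

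The main obstacle I anticipate is step (i): justifying the integral identity in the weak setting of Lemma~\ref{lem:patch2-veryweak-wellposed}, where the exterior data $\chi(t)\mathbbm1_{B_3\setminus\overline{B_2}}Y_{mk\ell}$ has only $L^2$-in-space regularity, so the pairing $\int (-\Delta)^s u \cdot (\text{exterior data})$ and the manipulation $(\int_{(\mathbb{R}^n)_T}-\int_{(\Omega_e)_T})$ must be handled carefully — likely by working with the decomposition $u=v+\chi\tilde f$ from \eqref{eq:patch2-veryweak4}, so that $v\in \widetilde H^s(B_1)$ is the genuinely $H^s$ part and the singular-integral representation of $(-\Delta)^s v$ on $B_3\setminus\overline{B_2}$ (as in the proof of Lemma~\ref{lem:patch2-crucial-regularity-DN}) makes all pairings classical. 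A secondary check is that reflecting one solution in time ($t\mapsto T-t$) is compatible with the two different initial/terminal conditions, exactly as in the local-in-time identity \eqref{eq:integration-by-parts}; this is routine but must be spelled out. Once the identity is in place, steps (ii)--(iv) are direct applications of the already-established bounds.
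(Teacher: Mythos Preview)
Your approach has a genuine gap at step (i). The integral identity of Lemma~\ref{lem:self-adjoint} is a \emph{time-integrated} relation: it equates
\[
\int_{(\Omega_e)_T}\bigl((\Lambda_{q}-\Lambda_{0})f_{1}\bigr)f_{2}^{\ast}\,dx\,dt
=\int_{(B_1)_T} q\,u_{1}\,\mathring u_{2}^{\ast}\,dx\,dt,
\]
so the left-hand side is a single number, not a function of $t$. Concretely, with $f_j=\chi\,Y_{m_jk_j\ell_j}$ the identity yields a bound only for
\[
\int_0^T \chi(T-t)\,\bigl((\Lambda_q-\Lambda_0)(\chi Y_{m_1k_1\ell_1})(\cdot,t),\,Y_{m_2k_2\ell_2}\bigr)_{L^2(B_3\setminus\overline{B_2})}\,dt,
\]
which is not the pointwise quantity $\Gamma_{m_1k_1\ell_1}^{m_2k_2\ell_2}(q)(t)$ appearing in \eqref{eq:opt-matrix-repn-est3}. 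Trying to localize in time by replacing the second cutoff $\chi$ by a bump $\chi_\epsilon$ concentrating near a fixed $t_0$ destroys the estimate, since the bound \eqref{lem:opt-basic-est} for $\mathring u_{m_2k_2\ell_2}$ carries a factor $\|\chi_\epsilon\|_{W^{2,\infty}(0,T)}\to\infty$. In short, the Alessandrini-type identity gives at best a weak-in-time bound and cannot deliver the required $\sup_{t\in(0,T)}$ control.

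The paper proceeds differently and avoids the integral identity altogether. One bounds $|\Gamma_{m_1k_1\ell_1}^{m_2k_2\ell_2}(q)(t)|$ by $\|\Gamma(q)(Y_{m_1k_1\ell_1})(\cdot,t)\|_{L^2(B_3\setminus\overline{B_2})}$ (since $\|Y_{m_2k_2\ell_2}\|_{L^2}=1$), writes $\Gamma(q)(Y_{m_1k_1\ell_1})=\chi\,(-\Delta)^s w$ with $w:=u_{m_1k_1\ell_1}-\mathring u_{m_1k_1\ell_1}$ supported in $\overline{B_1}$, and then uses the singular-integral representation of $(-\Delta)^s$ exactly as in the proof of Lemma~\ref{lem:patch2-crucial-regularity-DN}: since $|x-y|\ge 1$ for $x\in B_3\setminus\overline{B_2}$, $y\in B_1$, one gets the \emph{pointwise-in-$t$} estimate $\|(-\Delta)^s w(\cdot,t)\|_{L^2(B_3\setminus\overline{B_2})}\le C\|w(\cdot,t)\|_{L^2(B_1)}$, and then \eqref{lem:opt-basic-est} gives the factor $e^{-c_{n,s}(m_1+k_1)}$. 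The decay in the second index $e^{-c_{n,s}(m_2+k_2)}$ is then obtained not via a second solution but simply from the self-adjointness $\Gamma_{m_1k_1\ell_1}^{m_2k_2\ell_2}(q)(t)=\Gamma_{m_2k_2\ell_2}^{m_1k_1\ell_1}(q)(t)$, and taking the minimum of the two bounds gives $e^{-c'_{n,s}\sigma}$. Your steps (ii)--(iv) are fine once this direct, pointwise argument replaces the integral-identity step.
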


\begin{proof}
	Since $\left\|Y_{m_{2}k_{2}\ell_{2}}\right\|_{L^{2}(B_{3}\setminus\overline{B_{2}})}=1$,
	using the equivalent definition of $(-\Delta)^{s}$ again, we have 
	\begin{align*}
		& \quad  \sup_{t\in(0,T)}\left|\Gamma_{m_{1}k_{1}\ell_{1}}^{m_{2}k_{2}\ell_{2}}(q)(t)\right|^{2}\\
		& \le\sup_{t\in(0,T)}\left\|\Gamma(q)(Y_{m_{1}k_{1}\ell_{1}})\right\|_{L^{2}(B_{3}\setminus\overline{B_{2}})}^{2}(t)\\
		& \le\|\chi\|_{L^{\infty}(0,T)}^{2}\sup_{t\in(0,T)}\left\|(-\Delta)^{s}w_{m_{1}k_{1}\ell_{1}}\right\|_{L^{2}(B_{3}\setminus\overline{B_{2}})}^{2}(t) \\
		 &\qquad (w_{m_{1}k_{1}\ell_{1}}:=u_{m_{1}k_{1}\ell_{1}}-\mathring{u}_{m_{1}k_{1}\ell_{1}})\\
		& =\|\chi\|_{L^{\infty}(0,T)}^{2}C_{n,s}\sup_{t\in(0,T)}\int_{B_{3}\setminus\overline{B_{2}}}\left|\int_{\mathbb{R}^{d}}\frac{w_{m_{1}k_{1}\ell_{1}}(x,t)-w_{m_{1}k_{1}\ell_{1}}(y,t)}{|x-y|^{n+2s}}\,dy\right|^{2}\,dx\\
		&\le\|\chi\|_{L^{\infty}(0,T)}^{2}C_{n,s}\sup_{t\in(0,T)}\int_{B_{3}\setminus\overline{B_{2}}}\left|\int_{B_{1}}w_{m_{1}k_{1}\ell_{1}}(y,t)\,dy\right|^{2}\,dx\\
		&\qquad(\text{since}\;\;w_{m_{1}k_{1}\ell_{1}}=0\text{ in }(B_{3}\setminus\overline{B_{2}})_{T})\\
		& =\|\chi\|_{L^{\infty}(0,T)}^{2}C_{n,s}\sup_{t\in(0,T)}|B_{3}\setminus\overline{B_{2}}|\left|\int_{B_{1}}w_{m_{1}k_{1}\ell_{1}}(y,t)\,dy\right|^{2}\, dx\\
		& \le\|\chi\|_{L^{\infty}(0,T)}^{2}C_{n,s}\sup_{t\in(0,T)}|B_{3}\setminus\overline{B_{2}}||B_{1}|\int_{B_{1}}|w_{m_{1}k_{1}\ell_{1}}(y,t)|^{2}\,dy\\
		& =\|\chi\|_{L^{\infty}(0,T)}^{2}C_{n,s}|B_{3}\setminus\overline{B_{2}}||B_{1}|\left\|w_{m_{1}k_{1}\ell_{1}}\right\|_{L^{\infty}(0,T;L^{2}(B_{1})}^{2},
	\end{align*}
	for some positive constant $C_{n,s}$ depending only on $n$ and $s\in (0,1)$. Combining the estimate above and \eqref{lem:opt-basic-est} gives
	\begin{align}\label{eq:opt-matrix-repn-est1}
			\sup_{t\in(0,T)}\left|\Gamma_{m_{1}k_{1}\ell_{1}}^{m_{2}k_{2}\ell_{2}}(q)(t)\right| \le C_{R,T,n,s}\|\chi\|^2_{W^{2,\infty}(0,T)}e^{-c_{n,s}(m_{1}+k_{1})}.
	\end{align}
	Since $q$ is real-valued, $\Gamma(q)$ is self-adjoint. Therefore,
	we have 
	\begin{equation}\label{eq:opt-matrix-repn-est2}
		\begin{split}
			\sup_{t\in(0,T)}\left|\Gamma_{m_{1}k_{1}\ell_{1}}^{m_{2}k_{2}\ell_{2}}(q)\right|=&\left|\Gamma_{m_{2}k_{2}\ell_{2}}^{m_{1}k_{1}\ell_{1}}(q)\right|\\
			\le & C_{R,T,n,s}\|\chi\|_{W^{2,\infty}(0,T)}^{2} e^{-c_{n,s}(m_{2}+k_{2})}.
		\end{split}
	\end{equation}
	The estimate \eqref{eq:opt-matrix-repn-est3} then follows from \eqref{eq:opt-matrix-repn-est1} and \eqref{eq:opt-matrix-repn-est2}.\end{proof}

\subsection{Construction of a family of $\delta$-net}

It follows easily from \eqref{eq:opt-matrix-repn-est3} that
\begin{equation}
\sup_{t\in(0,T)}(1+\sigma)^{n+2}\left|\Gamma_{m_{1}k_{1}\ell_{1}}^{m_{2}k_{2}\ell_{2}}(q)(t)\right|\le C_{R,T,n,s}'\|\chi\|_{W^{2,\infty}(0,T)}^{2} (1+\sigma)^{n+2}e^{-c_{n,s}'\sigma},\label{eq:opt-matrix-repn-est4}
\end{equation}
that is, 
\begin{align*}
\sup_{t\in(0,T)}\left\|(\Gamma_{m_{1}k_{1}\ell_{1}}^{m_{2}k_{2}\ell_{2}}(q)(t))\right\|_{X}
	\le C_{R,T,n,s}'\|\chi\|_{W^{2,\infty}(0,T)}^{2} \sup_{\sigma\ge0}(1+\sigma)^{n+2}e^{-c_{n,s}'\sigma}<\infty,
\end{align*}
and thus $\LC \Gamma_{m_{1}k_{1}\ell_{1}}^{m_{2}k_{2}\ell_{2}}(B_{+,R}^{\infty})(t)\RC \subset X$, for all $t\in(0,T)$. Let us define the $\delta$-net as follows.

\begin{defi}[$\delta$-net]
	A set $Y$ of a metric space $(M,\mathsf{d})$ is called a $\delta$-net
	for $Y_{1}\subset M$ if for any $x\in Y_{1}$, there is a $y\in Y$
	such that $\mathsf{d}(x,y)\le\delta$. 
\end{defi}

\begin{lem}
	\label{lem:opt-delta-net} Let $n \ge 2$. Given any $R>1$ and $0<\delta<\|\chi\|_{W^{2,\infty}(0,T)}^{2}$.
	There exists a family $\left\{ Y(t):\,  t\in(0,T)\right\}$
	such that each $Y(t)$ is a $\delta$-net of 
	$$
	\LC (\Gamma_{m_{1}k_{1}\ell_{1}}^{m_{2}k_{2}\ell_{2}}(B_{+,R}^{\infty})(t)),\norm{\cdot}_{X}\RC
	$$
	and satisfies 
	\begin{equation}
		\log|Y(t)|\le K_{n,s}\log^{2n+1}\left(\frac{K_{R,T,n,s}\|\chi\|_{W^{2,\infty}(0,T)}^{2}}{\delta}\right)\label{eq:opt-delta-net-count1}
	\end{equation}
	for some positive constants $K_{n,s}$ and $K_{R,T,n,s}$, where $|Y(t)|$ denotes the cardinality of $Y(t)$. 
\end{lem}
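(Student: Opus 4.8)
The plan is to run a Mandache-type covering argument in the spirit of \cite{Man01instability}, exploiting the super-exponential decay of the matrix entries $\Gamma_{m_{1}k_{1}\ell_{1}}^{m_{2}k_{2}\ell_{2}}(q)(t)$ in the combined frequency $\sigma=\max\{m_{1}+k_{1},m_{2}+k_{2}\}$ provided by \eqref{eq:opt-matrix-repn-est3}--\eqref{eq:opt-matrix-repn-est4}, uniformly in $q\in B_{+,R}^{\infty}$ and in $t\in(0,T)$. This decay lets one replace a full tensor first by a tensor with only finitely many nonzero entries, and then by a tensor whose surviving entries lie on an explicit grid, while keeping the total number of such tensors sub-exponential in a power of $\log(1/\delta)$.

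First I would fix a frequency cutoff. Combining \eqref{eq:opt-matrix-repn-est4} with the elementary bound $(1+\sigma)^{n+2}e^{-c'_{n,s}\sigma}\le C_{n,s}\,e^{-c'_{n,s}\sigma/2}$, one chooses
\[
\sigma_{0}=\sigma_{0}(\delta):=\Big\lceil\tfrac{2}{c'_{n,s}}\log\tfrac{K'_{R,T,n,s}\,\norm{\chi}_{W^{2,\infty}(0,T)}^{2}}{\delta}\Big\rceil
\]
large enough that $(1+\sigma)^{n+2}\,\abs{\Gamma_{m_{1}k_{1}\ell_{1}}^{m_{2}k_{2}\ell_{2}}(q)(t)}\le\delta$ for all $\sigma>\sigma_{0}$, all $q\in B_{+,R}^{\infty}$ and all $t\in(0,T)$; hence zeroing out every entry with $\sigma>\sigma_{0}$ costs at most $\delta$ in the norm \eqref{eq:opt-X-norm}.

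Next I would count the surviving entries and discretize their ranges. For $\sigma\le\sigma_{0}$ there are only finitely many admissible triples, and the bound $\ell_{m}\le 2(1+m)^{n-2}$ from Proposition~\ref{prop:opt-special-basis} gives $\#\{(m,k,\ell):m+k\le\sigma_{0},\ 0\le\ell\le\ell_{m}\}\le C_{n}(1+\sigma_{0})^{n}$, so the number $N$ of surviving \emph{pairs} of triples satisfies $N\le C_{n}(1+\sigma_{0})^{2n}$. By \eqref{eq:opt-matrix-repn-est3} each surviving entry lies in $I_{\sigma}:=[-C'_{R,T,n,s}\norm{\chi}_{W^{2,\infty}(0,T)}^{2}e^{-c'_{n,s}\sigma},\,C'_{R,T,n,s}\norm{\chi}_{W^{2,\infty}(0,T)}^{2}e^{-c'_{n,s}\sigma}]$ for every $q$ and $t$; subdividing $I_{\sigma}$ by points of spacing $\tfrac{\delta}{2}(1+\sigma)^{-(n+2)}$ uses at most $K_{R,T,n,s}\norm{\chi}_{W^{2,\infty}(0,T)}^{2}/\delta$ grid points (here one uses $e^{-c'_{n,s}\sigma}(1+\sigma)^{n+2}\le C_{n,s}$ and $\delta<\norm{\chi}_{W^{2,\infty}(0,T)}^{2}$), and rounding an entry to its nearest grid point costs at most $\delta/2$ in \eqref{eq:opt-X-norm}. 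I would then let $Y(t)$ be the finite set of all tensors vanishing for $\sigma>\sigma_{0}$ whose $\sigma\le\sigma_{0}$ entries lie on these grids: by the above, entrywise rounding of $(\Gamma_{m_{1}k_{1}\ell_{1}}^{m_{2}k_{2}\ell_{2}}(q)(t))$ lands in $Y(t)$ at $X$-distance $\le\delta$, so $Y(t)$ is a $\delta$-net, and
\[
\log\abs{Y(t)}\le N\log\tfrac{K_{R,T,n,s}\norm{\chi}_{W^{2,\infty}(0,T)}^{2}}{\delta}\le C_{n}(1+\sigma_{0})^{2n}\log\tfrac{K_{R,T,n,s}\norm{\chi}_{W^{2,\infty}(0,T)}^{2}}{\delta}.
\]
Substituting $\sigma_{0}\lesssim\log(K'_{R,T,n,s}\norm{\chi}_{W^{2,\infty}(0,T)}^{2}/\delta)$ yields \eqref{eq:opt-delta-net-count1}; and since the intervals $I_{\sigma}$ and the grids do not depend on $t$, one and the same construction produces the entire family $\{Y(t):t\in(0,T)\}$.

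The main obstacle will be the balancing of constants rather than any conceptual difficulty: the weight $(1+\sigma)^{n+2}$ in \eqref{eq:opt-X-norm} must simultaneously be used to kill the tail $\sigma>\sigma_{0}$ and to fix the per-entry mesh size, and one must exploit the sharp count $O(\sigma_{0}^{2n})$ of low-frequency index pairs — with the factor $\ell_{m}\lesssim(1+m)^{n-2}$ entering once for each of the two triples — so that the exponent in \eqref{eq:opt-delta-net-count1} comes out exactly $2n+1$; any cruder counting would inflate it.
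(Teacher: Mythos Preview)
Your proposal is correct and follows essentially the same Mandache-type covering argument as the paper: choose a frequency cutoff $\sigma_{0}\sim\log(C\|\chi\|_{W^{2,\infty}}^{2}/\delta)$ using the decay \eqref{eq:opt-matrix-repn-est4}, discretize the $O((1+\sigma_{0})^{2n})$ surviving entries on a grid whose cardinality per entry is $O(\|\chi\|_{W^{2,\infty}}^{2}/\delta)$, and multiply to obtain \eqref{eq:opt-delta-net-count1}. The only cosmetic difference is that you use a $\sigma$-dependent mesh size $\tfrac{\delta}{2}(1+\sigma)^{-(n+2)}$ on the $\sigma$-dependent interval $I_{\sigma}$, whereas the paper uses a single uniform mesh $\delta'=\delta(1+\sigma_{*})^{-(n+2)}$ for all low-frequency entries; both choices yield the same grid-point count and the same exponent $2n+1$.
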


\begin{proof}
	We first note that it suffices to take $C_{R,T,n,s}'\ge1$
	and $c_{n,s}'>0$ described in Lemma~\ref{lem:opt-matrix-repn-est}. 
	
	\vspace{3mm}
	\noindent{\it Step 1. Initialization.}
	\vspace{3mm}
	
	Given any $0<\delta<\|\chi\|_{W^{2,\infty}(0,T)}^{2}$, suggested by \eqref{eq:opt-matrix-repn-est4},
	let $\tilde{\sigma}>0$ be the unique solution to 
	\begin{equation*}
		\LC 1+\tilde{\sigma}\RC^{n+2}e^{-c_{n,s}'\tilde{\sigma}}=\frac{\delta}{C_{R,T,n,s}'\|\chi\|_{W^{2,\infty}(0,T)}^{2}}.
	\end{equation*}
	It is easy to see that 
	\[
	\frac{\delta}{C_{R,T,n,s}'\|\chi\|_{W^{2,\infty}(0,T)}^{2}}=\LC 1+\tilde{\sigma}\RC^{n+2}e^{-\frac{c'_{n,s}}{2}\tilde{\sigma}}e^{-\frac{c'_{n,s}}{2}\tilde{\sigma}}\le C_{n,s}''e^{-\frac{c'_{n,s}}{2}\tilde{\sigma}}
	\]
	with 
	\[
	C_{n,s}'':=\sup_{\sigma>0}\LC 1+\sigma\RC^{n+2}e^{-\frac{c_{n,s}'}{2}\sigma}.
	\]
	Therefore, we have 
	\[
	\tilde{\sigma}\le\frac{2}{c_{n,s}'}\log\LC\frac{C_{n,s}''C_{R,T,n,s}'\|\chi\|_{W^{2,\infty}(0,T)}^{2}}{\delta}\RC.
	\]
Let $\sigma_{*}=\lfloor\tilde\sigma\rfloor$, then
 	\begin{equation}
		1+\sigma_{*}\le1+\tilde{\sigma}\le{c_{n,s}''}\log\LC \frac{C_{n,s}''C_{R,T,n,s}'\|\chi\|_{W^{2,\infty}(0,T)}^{2}}{\delta}\RC\label{eq:opt-sigma-star-est}
	\end{equation}
	for some constant $c_{n,s}''$. Observe that if ${\mathbb Z}\ni\sigma\ge\sigma_\ast+1$, then
	\begin{equation}\label{eq:opt-sigma-star}
	\LC 1+{\sigma}\RC^{n+2}e^{-c_{n,s}'{\sigma}}\le\frac{\delta}{C_{R,T,n,s}'\|\chi\|_{W^{2,\infty}(0,T)}^{2}},
	\end{equation}
	where $\mathbb{Z}$ denotes the set of integers.
	
	\vspace{3mm}
	\noindent{\it Step 2. Construction of sets.}
	\vspace{3mm}
	
	Let $\delta':=\frac{\delta}{(1+\sigma_{*})^{n+2}}$ and define 
	\[
	Y':=\left\{ a\in\delta'\mathbb{Z}:\,  |a|\le C_{R,T,n,s}'\|\chi\|_{W^{2,\infty}(0,T)}^{2}(1+\sigma_{*})^{-(n+2)}\right\},
	\]
	and 
	\[
	Y(t):=\left\{ \LC  b_{m_{1}k_{1}\ell_{1}}^{m_{2}k_{2}\ell_{2}}(t)\RC :\, \begin{array}{l}
			b_{m_{1}k_{1}\ell_{1}}^{m_{2}k_{2}\ell_{2}}(t)\in Y'\text{ if }{\mathbb Z}_+\ni\sigma\le\sigma_{*},\\
			b_{m_{1}k_{1}\ell_{1}}^{m_{2}k_{2}\ell_{2}}(t)=0, \text{ otherwise, }
	\end{array}\right\},
	\]
	where ${\mathbb Z}_+$ is the set of non-negative integers. 
	
	\vspace{3mm}
	\noindent{\it Step 3. Verifying $Y(t)$ is a $\delta$-net.}
	\vspace{3mm}
	
	Our goal is to construct an appropriate $\LC b_{m_{1}k_{1}\ell_{1}}^{m_{2}k_{2}\ell_{2}}(t)\RC \in Y(t)$ that is an approximation of $\LC \Gamma_{m_{1}k_{1}\ell_{1}}^{m_{2}k_{2}\ell_{2}}(q)(t)\RC$. 
	\begin{itemize}
		\item If $\sigma\le\sigma_{\ast}$, we choose $b_{m_{1}k_{1}\ell_{1}}^{m_{2}k_{2}\ell_{2}}(t)\in Y'$
		to be the closest element to $\Gamma_{m_{1}k_{1}\ell_{1}}^{m_{2}k_{2}\ell_{2}}(q)(t)$.
		Then 
		\[
		(1+\sigma)^{n+2}\left|b_{m_{1}k_{1}\ell_{1}}^{m_{2}k_{2}\ell_{2}}(t)-\Gamma_{m_{1}k_{1}\ell_{1}}^{m_{2}k_{2}\ell_{2}}(q)(t)\right|\le(1+\sigma_{*})^{n+2}\delta'=\delta.
		\]
		\item Otherwise, if $\sigma\ge\sigma_{*}+1$, we simply choose $b_{m_{1}k_{1}\ell_{1}}^{m_{2}k_{2}\ell_{2}}(t)=0$.
		Consequently, we obtain from \eqref{eq:opt-matrix-repn-est4} and \eqref{eq:opt-sigma-star} that 
		\begin{align*}
			& (1+\sigma)^{n+2}\left|b_{m_{1}k_{1}\ell_{1}}^{m_{2}k_{2}\ell_{2}}(t)-\Gamma_{m_{1}k_{1}\ell_{1}}^{m_{2}k_{2}\ell_{2}}(q)(t)\right| \\= &(1+\sigma)^{n+2}\left|\Gamma_{m_{1}k_{1}\ell_{1}}^{m_{2}k_{2}\ell_{2}}(q)(t)\right|\\
			\le& C_{R,T,n,s}'\|\chi\|_{W^{2,\infty}(0,T)}^{2}(1+\sigma)^{n+2}e^{-c_{n,s}'\sigma}\\
			\le & \delta.
		\end{align*}
	\end{itemize}
	Combining these two cases, and by the definition of $X$-norm, we have 
	\[
	\sup_{t\in(0,T)}\left\|\LC b_{m_{1}k_{1}\ell_{1}}^{m_{2}k_{2}\ell_{2}}-\Gamma_{m_{1}k_{1}\ell_{1}}^{m_{2}k_{2}\ell_{2}}(q)\RC\right\|_{X}\le\delta.
	\]
	In other words, we have shown that, $Y(t)$ is a $\delta$-net
	of $\LC (\Gamma_{m_{1}k_{1}\ell_{1}}^{m_{2}k_{2}\ell_{2}}(B_{+,R}^{\infty})(t)),\norm{\cdot}_{X}\RC$, for each $t\in(0,T)$. 
	
	\vspace{3mm}
	\noindent{\it Step 4. Calculating the cardinality of $Y(t)$.}
	\vspace{3mm}
	
	Let $N_{\sigma}$ be the number of 6-tuples $(m_{1},k_{1},\ell_{1},m_{2},k_{2},\ell_{2})$
	with $\sigma=\max\{m_{1}+k_{1},m_{2}+k_{2}\}$ as in Lemma \ref{lem:opt-matrix-repn-est}. We now want to estimate $N_{\sigma}$. First of all, we fix any integer $0\le\sigma'\le\sigma$ and compute the number of 6-tuples with $m_{1}+k_{1}=\sigma$
	and $m_{2}+k_{2}=\sigma'$. It is easy to see that there are 
	\[
	\sigma+1\text{ choices of }m_{1}\quad\text{and}\quad\sigma'+1\text{ choices of }m_{2}.
	\]
	Moreover, the number of choices of $\ell_{i}$ is bounded by $\ell_{m_{i}}$,
	and, from Proposition~\ref{prop:opt-special-basis}, we can see that 
	\[
	\ell_{m_{i}}\le\ell_{\sigma}\le2(1+\sigma)^{n-2}\quad\text{ for }i=1,2.
	\]
	Therefore, the number $N_\sigma$ of 6-tuples with $m_{1}+k_{1}=\sigma$ and
	$m_{2}+k_{2}=\sigma'$ is bounded by $4(1+\sigma)^{2n-2}$. Thus,
	the number of 6-tuples with $m_{1}+k_{1}=\sigma$ and $m_{2}+k_{2}\le\sigma$
	is bounded by $4(1+\sigma)^{2n-1}$. Interchanging the role of $(m_{1},k_{1},\ell_{1})$
	with $(m_{2},k_{2},\ell_{2})$, we obtain a similar bound, and hence
	\[
	N_{\sigma}\le8(1+\sigma)^{2n-1}.
	\]
	Therefore, we derive that 
	\[
	N_{*}:=\sum_{\sigma=0}^{\sigma_{*}}N_{\sigma}\le\sum_{\sigma=0}^{\sigma_{*}}8(1+\sigma)^{2n-1}\le8(1+\sigma_{*})^{2n}.
	\]
	
	From \eqref{eq:opt-sigma-star-est}, it follows 
	\[
	N_{*}\le8 \LC {c_{n,s}''}\log\LC \frac{C_{n,s}''C_{R,T,n,s}'\|\chi\|_{W^{2,\infty}(0,T)}^{2}}{\delta}\RC\RC^{2n}.
	\]
	Since $|Y(t)|=|Y'|^{N_{*}}$ and 
	
	\begin{align*}
		|Y'| & =1+2\left\lfloor\frac{C_{R,T,n,s}'\|\chi\|_{W^{2,\infty}(0,T)}^{2} (1+{\sigma}_\ast)^{-(n+2)}}{\delta'}\right\rfloor\\
		& \le1+\frac{2C_{R,T,n,s}'\|\chi\|_{W^{2,\infty}(0,T)}^{2}}{\delta},
	\end{align*}
	we can see that
	\begin{align*}
		 \log|Y(t)|=&N_{*}\log|Y'|\\
		\le & 8\LC {c_{n,s}''}\log\LC \frac{C_{n,s}''C_{R,T,n,s}'\|\chi\|_{W^{2,\infty}(0,T)}^{2}}{\delta}\RC\RC^{2n}\\
		& \qquad\times\log \LC 1+\frac{2C_{R,T,n,s}'\|\chi\|_{W^{2,\infty}(0,T)}^{2}}{\delta}\RC.
	\end{align*}
	Setting 
	\begin{align*}
		K_{R,T,n,s} & :=C_{n,s}''C_{R,T,n,s}'+2C_{R,T,n,s}',\\
		\wt{C}_{n,s} & :={c_{n,s}''}+1,
	\end{align*}
	we then obtain 
	\begin{align*}
		\log|Y(t)| & \le8\LC    \wt{C}_{n,s}\log\LC\frac{K_{R,T,n,s}\|\chi\|_{W^{2,\infty}(0,T)}^{2}}{\delta}\RC\RC^{2n+1}\\
		& =K_{n,s}\log^{2n+1}\LC \frac{K_{R,T,n,s}\|\chi\|_{W^{2,\infty}(0,T)}^{2}}{\delta}\RC
	\end{align*}
	with $K_{n,s}=8(\wt{C}_{n,s})^{2n+1}$. This proves the assertion.
\end{proof}

\begin{rmk}
	Note that 
	\[
	\inf_{0<\delta<\|\chi\|_{W^{2,\infty}(0,T)}^{2}}\log\LC \frac{K_{R,T,n,s}\|\chi\|_{W^{2,\infty}(0,T)}^{2}}{\delta}\RC=\log(K_{R,T,n,s}).
	\]
	Therefore, for each $\alpha>0$ and 
	\begin{equation}
		0<\epsilon<\log^{-\frac{(2n+1)\alpha}{n}}(K_{R,T,n,s})=:c_{R,T,n,s},\label{eq:opt-epsilon-arbitrary}
	\end{equation}
	there exists a unique $0<\delta<\|\chi\|_{W^{2,\infty}(0,T)}^{2}$ such
	that 
	\begin{equation}
		\epsilon^{-\frac{n}{(2n+1)\alpha}}=\log\LC\frac{K_{R,T,n,s}\|\chi\|_{W^{2,\infty}(0,T)}^{2}}{\delta}\RC.\label{eq:opt-delta-epsilon}
	\end{equation}
	Therefore, we can rewrite \eqref{eq:opt-delta-net-count1} as 
	\begin{equation}
		\log|Y(t)|\le K_{n,s}\epsilon^{-\frac{n}{\alpha}}.\label{eq:opt-delta-net-count2}
	\end{equation}
\end{rmk}

\subsection{Construction of an $\epsilon$-discrete set}

Fixing any $r_{0}\in(0,1)$, $\alpha>0$, $\epsilon>0$, and $\beta>0$,
we define the following set: 
\[
\mathcal{N}_{\alpha\beta}^{\epsilon}(B_{r_{0}}):=\left\{ q\ge0:\, {\rm supp}\,(q)\subset B_{r_{0}},\ \|q\|_{L^{\infty}}\le\epsilon,\ \|q\|_{C^{\alpha}}\le\beta \right\}.
\]

The following lemma can be found in \cite[Proposition 2.1]{KUW21instability}, in \cite[Lemma 5.2]{ZZ19instability}, or in \cite{KT59entropy,KT61entropy}
in a more abstract form, also see \cite[Lemma 2]{Man01instability}
for a direct proof, which is valid for all $n \in \mathbb{N}$. Additionally, we refer to \cite[Proposition 3.1]{DR03instability}
and \cite[Proposition 2.2]{DR03instabilityarxiv}, where
similar results were derived under different settings.

\begin{defi}[$\eps$-discrete set]
	A set $Z$ of a metric space $(M,\mathsf{d})$ is called an $\epsilon$-discrete
	set if $\mathsf{d}(z_{1},z_{2})\ge\epsilon$ for all $z_{1}\neq z_{2}\in Z$. 
\end{defi}

\begin{lem}
	\label{lem:opt-discrete-set}Let $n \in \mathbb{N}$ and $\alpha>0$. There exists a constant $\mu=\mu(n,\alpha)>0$
	such that the following statement holds for all $\beta>0$ and for
	all $\epsilon\in(0,\mu\beta)$. Then there exists a $\epsilon$-discrete
	(a.k.a. $\epsilon$-distinguishable) subset $Z$ of $\LC \mathcal{N}_{\alpha\beta}^{\epsilon}(B_{r_{0}}),\|\cdot\|_{L^{\infty}}\RC$
	such that 
	\[
	\log|Z|\ge2^{-(n+1)}\LC\frac{\mu\beta}{\epsilon}\RC^{\frac{n}{\alpha}},
	\]
	where $|Z|$ denotes the cardinality of $Z$.
\end{lem}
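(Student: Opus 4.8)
For Lemma~\ref{lem:opt-discrete-set} the plan is the classical ``separated bumps'' construction going back to Mandache \cite{Man01instability}; one could of course simply cite \cite{KUW21instability,ZZ19instability,KT59entropy,KT61entropy}, but here is how I would argue directly. First I would fix, once and for all, a reference profile: a nonnegative $\psi\in C_{c}^{\infty}((-\tfrac14,\tfrac14)^{n})$ with $\max\psi=\psi(0)=1$, together with a closed cube $Q_{0}\subset B_{r_{0}}$ of some fixed side length $\ell_{0}>0$. For each integer $k\ge1$ I would partition $Q_{0}$ into $N:=k^{n}$ congruent subcubes of side $\ell_{0}/k$ with centers $c_{1},\dots,c_{N}$, and set $\psi_{i}(x):=\psi\big(\tfrac{2k}{\ell_{0}}(x-c_{i})\big)$; then the supports $\operatorname{supp}\psi_{i}$ are pairwise disjoint, contained in $Q_{0}$, and mutually separated by distance $\gtrsim\ell_{0}/k$. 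The only scaling fact needed is the elementary bound $\|\psi_{i}\|_{C^{\alpha}}\le C_{n,\alpha}k^{\alpha}$ for all $k\ge1$, with $C_{n,\alpha}$ depending only on $n,\alpha$ and the fixed data $\psi,\ell_{0}$ (and hence, ultimately, on $r_{0}$ through $\ell_{0}$).

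Next, for $S\subset\{1,\dots,N\}$ I would put $q_{S}:=\epsilon\sum_{i\in S}\psi_{i}$. By construction $q_{S}\ge0$, $\operatorname{supp}(q_{S})\subset Q_{0}\subset B_{r_{0}}$, and, since the $\psi_{i}$ have disjoint supports with $\max\psi_{i}=1$, one has $\|q_{S}\|_{L^{\infty}}=\epsilon$; the separated-support property yields $\|q_{S}\|_{C^{\alpha}}\le C_{n,\alpha}\,\epsilon\,k^{\alpha}$ uniformly in $S$. Hence, if $k:=\big\lfloor(\beta/(C_{n,\alpha}\epsilon))^{1/\alpha}\big\rfloor$, then $\|q_{S}\|_{C^{\alpha}}\le\beta$, so $q_{S}\in\mathcal{N}_{\alpha\beta}^{\epsilon}(B_{r_{0}})$ for every $S$. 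The set $Z:=\{q_{S}:\,S\subset\{1,\dots,N\}\}$ is automatically $\epsilon$-discrete: if $S\ne S'$, pick $i\in S\triangle S'$; at $x=c_{i}$ one of $q_{S},q_{S'}$ takes the value $\epsilon$ and the other $0$, so $\|q_{S}-q_{S'}\|_{L^{\infty}}\ge\epsilon$. Since $|Z|=2^{N}=2^{k^{n}}$, it remains to choose $\mu=\mu(n,\alpha)>0$ small enough (relative to $C_{n,\alpha}$) that $\epsilon<\mu\beta$ forces $k\ge2$, hence $k\ge\tfrac12(\beta/(C_{n,\alpha}\epsilon))^{1/\alpha}$, and then
\[
\log|Z|=k^{n}\log2\ \ge\ 2^{-n}\log2\,\big(\beta/(C_{n,\alpha}\epsilon)\big)^{n/\alpha}\ \ge\ 2^{-(n+1)}\big(\mu\beta/\epsilon\big)^{n/\alpha},
\]
the last step being just the choice of $\mu$.

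The one spot that genuinely needs care — the only place a routine computation hides — is the uniform estimate $\|q_{S}\|_{C^{\alpha}}\le C_{n,\alpha}\,\epsilon\,k^{\alpha}$, i.e. the stability of the Hölder norm under superposition of the bumps. For integer $\alpha$ this is immediate because $D^{j}q_{S}$ is a disjoint union of the $D^{j}(\epsilon\psi_{i})$; for non-integer $\alpha$ one bounds the Hölder quotient of the top-order derivative by distinguishing whether the two evaluation points lie in the same $\operatorname{supp}\psi_{i}$ (the single-bump estimate, which scales like $(2k/\ell_{0})^{\alpha}$) or in different ones, in which case the segment joining them crosses a gap on which that derivative vanishes, reducing again to the single-bump case at the cost of a factor~$2$. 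Everything else — the precise value of $\ell_{0}$, the constant $C_{n,\alpha}$, the threshold for $\mu$ — is elementary bookkeeping. Finally, note that the construction is dimension-agnostic, so it covers $n=1$ as well, consistent with the statement of the lemma.
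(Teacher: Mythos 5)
The paper does not actually prove Lemma~\ref{lem:opt-discrete-set}: it only cites \cite[Proposition 2.1]{KUW21instability}, \cite[Lemma 5.2]{ZZ19instability}, the Kolmogorov--Tikhomirov metric-entropy literature \cite{KT59entropy,KT61entropy}, and in particular \cite[Lemma 2]{Man01instability} ``for a direct proof, which is valid for all $n\in\mathbb{N}$.'' Your argument reproduces, from scratch, precisely that direct Mandache-style ``disjoint rescaled bumps'' construction, so there is no proof in the paper to compare against step by step; instead you have supplied the content the paper delegates to the references. Having checked it, the argument is correct and complete: the rescaled bumps $\psi_i$ have pairwise disjoint supports inside $Q_0\subset B_{r_0}$, giving $q_S\ge 0$, $\operatorname{supp}(q_S)\subset B_{r_0}$, $\|q_S\|_{L^\infty}\le\epsilon$ and the uniform $C^\alpha$ bound via the $k^\alpha$ scaling; the choice $k=\lfloor(\beta/(C_{n,\alpha}\epsilon))^{1/\alpha}\rfloor$ lands $Z:=\{q_S\}$ inside $\mathcal{N}^\epsilon_{\alpha\beta}(B_{r_0})$; the $\epsilon$-separation at the centers $c_i$ is immediate from disjointness; and $\log|Z|=k^n\log 2$, combined with $k\ge\frac12(\beta/(C_{n,\alpha}\epsilon))^{1/\alpha}$ once $\epsilon<\mu\beta$ forces $k\ge 2$, yields the claimed bound for a suitable $\mu=\mu(n,\alpha)$ (e.g.\ $\mu=(C_{n,\alpha}2^\alpha)^{-1}$, using $\log 2>2^{-(n+1)}$ for all $n\ge 1$). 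Your treatment of the non-integer H\"older seminorm across bump gaps (inserting a point on the segment where the relevant derivative vanishes) is the right way to control the superposition.

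One small presentational remark rather than a gap: your constant $C_{n,\alpha}$ implicitly depends on the fixed side length $\ell_0$ of $Q_0$, hence on $r_0$, so strictly speaking $\mu=\mu(n,\alpha,r_0)$; the paper writes $\mu=\mu(n,\alpha)$ but $r_0\in(0,1)$ is a fixed parameter in Section~\ref{sec:optimality}, so this is consistent with the intended use and harmless. You could also note explicitly that $q_\emptyset=0$ is allowed in $\mathcal{N}^\epsilon_{\alpha\beta}(B_{r_0})$ since the constraint is $\|q\|_{L^\infty}\le\epsilon$, not equality.
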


\subsection{Proof of Theorem~\ref{thm:main-instability}}

With Lemma~\ref{lem:opt-discrete-set} and Lemma~\ref{lem:opt-discrete-set}
at hand, we can prove the exponential instability of the inverse problem for the fractional
wave equation. 
\begin{proof}
	[Proof of Theorem~{\rm \ref{thm:main-instability}}]Let $\mu$ and $c_{R,T,n,s}$ be
	the constants given in Lemma~\ref{lem:opt-discrete-set} and in \eqref{eq:opt-epsilon-arbitrary}, respectively. For each
	$0<\epsilon<\min \left\{c_{R,T,n,s},R,\mu\beta\right\}$, we can construct an
	$\epsilon$-discrete set $Z$ as described in Lemma~\ref{lem:opt-discrete-set}.
	Let $\delta$ be the constant chosen in \eqref{eq:opt-delta-epsilon}. Next, for
	each $t\in(0,T)$, we construct a $\delta$-net $Y(t)$ as
	in Lemma~\ref{lem:opt-delta-net} and \eqref{eq:opt-delta-net-count2}
	holds. Clearly, $Y(t)$ is also a $\delta$-net for $\LC \Gamma_{m_{1}k_{1}\ell_{1}}^{m_{2}k_{2}\ell_{2}}(Z)(t),\|\cdot\|_{X}\RC$. 
	
	We now choose $\beta=\beta(R,n,\alpha)$ sufficiently large (which
	is independent of $\epsilon$) such that $\mu\beta\ge R$ and 
	\[
	\log|Z|\ge2^{-(n+1)}\LC \frac{\mu\beta}{\epsilon}\RC^{\frac{n}{\alpha}}>K_{n,s}\epsilon^{-\frac{n}{\alpha}}\ge\log|Y(t)|.
	\]
	Therefore, by the pigeonhole principle, for each $t\in(0,T)$, there exist $\LC y_{m_{1}k_{1}\ell_{1}}^{m_{2}k_{2}\ell_{2}}(t)\RC  \in Y(t)$,
	and two different $q_{1},q_{2}\in Z\subset\mathcal{N}_{\alpha\beta}^{\epsilon}(B_{r_{0}})$
	such that 
	\[
	\left\| \LC  \Gamma_{m_{1}k_{1}\ell_{1}}^{m_{2}k_{2}\ell_{2}}(q_{i})(t)-y_{m_{1}k_{1}\ell_{1}}^{m_{2}k_{2}\ell_{2}}(t)\RC\right\|_{X}\le\delta\quad\text{ for }i=1,2.
	\]
	In view of Lemma~\ref{lem:opt-matrix-repn}, we have 
	\begin{align*}
		& \sup_{t\in(0,T)}\left\|\chi(\Lambda_{q_{1}}-\Lambda_{q_{2}})\chi\right\|_{L^{2}(B_{3}\setminus\overline{B_{2}})\rightarrow L^{2}(B_{3}\setminus\overline{B_{2}})}(t)\\
		=&\sup_{t\in(0,T)}\left\|\Gamma(q_{1})-\Gamma(q_{2})\right\|_{L^{2}(B_{3}\setminus\overline{B_{2}})\rightarrow L^{2}(B_{3}\setminus\overline{B_{2}})}(t)\\
		\le&4\left\|\Gamma_{m_{1}k_{1}\ell_{1}}^{m_{2}k_{2}\ell_{2}}(q_{1})(t)-\Gamma_{m_{1}k_{1}\ell_{1}}^{m_{2}k_{2}\ell_{2}}(q_{2})(t)\right\|_{X}\\
		\le & 8\delta=K_{R,T,n,s}\|\chi\|_{W^{2,\infty}(0,T)}^{2} \exp(-\epsilon^{-\frac{n}{(2n+1)\alpha}}).
	\end{align*}
	The arbitrariness of $0\not\equiv\chi\in  C_{c}^{\infty}((0,T))$ leads to the estimate \eqref{eq:opt-DN-map-small}, while the estimate \eqref{eq:opt-discrete}
	immediately follows form the definition of $Z$. Moreover, since $\epsilon<R$, $\|q_{i}\|_{L^{\infty}}\le R$, for $i=1,2$. The proof is now completed. 
\end{proof}


We next prove the exponential instability in the case of $n=1$, Theorem~\ref{thm:1dim-main-instability}. The proof of Theorem~\ref{thm:1dim-main-instability} is very similar
to that of Theorem~{\rm \ref{thm:main-instability}}. The main
difference is that when $n=1$, the boundary $\partial B_{1}$ of the interval
$B_{1}=(-1,1)$ consists only two points $\{-1,1\}$. Therefore, we need to modify the proof of Proposition~\ref{prop:opt-special-basis}. 

We first construct an orthonormal basis $\{Y_{k}\}$ of $L^{2}((2,3))$
such that the solution $\tilde{Y}_{k}$ of 
\begin{equation}
	\begin{cases}
		(-\Delta)^{s}\tilde{Y}_{k}=0 & \text{ in }B_{1},\\
		\tilde{Y}_{k}=\mathbbm1_{(2,3)}Y_{k} & \text{ in }\mathbb{R}^{1}\setminus\overline{B_{1}},
	\end{cases}\label{eq:1-exterior-problem}
\end{equation}
satisfies some exponential decay bound. Similar to the proof of \cite[Lemma 2.1]{RS18Instability},
using the Poisson formula of $u_{k}$ in \cite[Theorem 2.10]{Bucur16Green},
there exists a constant $c=c(s)\neq0$ such that 
\begin{equation}
	\frac{\tilde{Y}_{k}(x)}{c(1-x^{2})^{s}}=\int_{\mathbb{R}^{1}\setminus\overline{B_{1}}}\frac{1}{|x-r|}\frac{\mathbbm1_{(2,3)}(r)Y_{k}(r)}{(r^{2}-1)^{s}}\,dr=\int_{2}^{3}\frac{1}{r-x}\frac{Y_{k}(r)}{(r^{2}-1)^{s}}\,dr\label{eq:1-Poisson-formula1}
\end{equation}
for all $x\in(-1,1)$. If we choose $\left\{Y_{k}=e^{2\pi ik(x-2)}\right\}$ to be the usual orthonormal
basis of $L^{2}((2,3))$, it will be difficult to obtain an exponential decay bound for $\tilde Y_{k}$. Therefore,
we would like to find another orthonormal basis for $L^{2}((2,3))$ to meet our goal. 
\begin{prop}
	\label{prop:1-basis-1dim}There exists a \emph{real-valued} orthonormal
	basis $\left\{Y_{k}\right\}$ of $L^{2}((2,3))$ satisfying that 
	\[
	\|\tilde{Y}_{k}\|_{L^{2}(B_{1})}\le C'e^{-C''k}\quad\text{ for all }\;\,k=0,1,2,\ldots,
	\]
	for some positive constants $C'$ and $C''$ independent of $Y_k$ and $\tilde{Y}_k$, where $\tilde{Y}_{k}\in H^{s}(\mathbb{R}^{1})$ is the unique solution
	of \eqref{eq:1-exterior-problem}. 
\end{prop}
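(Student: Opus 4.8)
The plan is to obtain $\{Y_{k}\}$ by Gram--Schmidt orthonormalization of an explicit real-valued sequence on $(2,3)$ tailored to the Poisson formula \eqref{eq:1-Poisson-formula1}. The starting observation is that for $x\in(-1,1)$ and $r\in(2,3)$ one has $|x/r|\le 1/2$, so that $\frac{1}{r-x}=\sum_{j\ge 0}x^{j}r^{-(j+1)}$ with the series converging uniformly in $r\in[2,3]$. Substituting this into \eqref{eq:1-Poisson-formula1} and interchanging sum and integral (legitimate since the partial sums converge uniformly and $g(r)(r^{2}-1)^{-s}\in L^{1}((2,3))$), I would record that for any $g\in L^{2}((2,3))$ with $s$-harmonic extension $\tilde g$,
\[
\frac{\tilde g(x)}{c(1-x^{2})^{s}}=\sum_{j\ge 0}x^{j}\langle g,\phi_{j}\rangle_{L^{2}((2,3))},\qquad \phi_{j}(r):=\frac{r^{-(j+1)}}{(r^{2}-1)^{s}},
\]
and note that, since $(r^{2}-1)^{-s}$ is bounded above and below on $(2,3)$, each $\phi_{j}\in L^{\infty}((2,3))\subset L^{2}((2,3))$ with $\|\phi_{j}\|_{L^{2}((2,3))}\le C_{s}2^{-j}$.

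Next I would verify that $\{\phi_{j}\}_{j\ge 0}$ is linearly independent with dense span in $L^{2}((2,3))$, so that Gram--Schmidt applied to it yields a genuine orthonormal \emph{basis} $\{Y_{k}\}_{k\ge 0}$, not merely an orthonormal system; moreover the $\phi_{j}$ being real forces the Gram--Schmidt coefficients to be real, hence the $Y_{k}$ are real-valued. Writing $\phi_{j}=w\,\psi_{j}$ with $w(r)=(r^{2}-1)^{-s}$ and $\psi_{j}(r)=r^{-(j+1)}$, multiplication by $w$ is a bounded and boundedly invertible operator on $L^{2}((2,3))$, so it suffices to treat $\{\psi_{j}\}$; the substitution $u=1/r$ induces a unitary $L^{2}((2,3))\to L^{2}((1/3,1/2))$ carrying $\psi_{j}$ to the monomial $u\mapsto u^{j}$, and the claim then reduces to the obvious independence of monomials together with the Weierstrass theorem on $[1/3,1/2]$. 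The structural consequence I will exploit is that $\langle Y_{k},\phi_{j}\rangle_{L^{2}((2,3))}=0$ for $0\le j\le k-1$, because $\operatorname{span}\{Y_{0},\dots,Y_{k-1}\}=\operatorname{span}\{\phi_{0},\dots,\phi_{k-1}\}$ and $Y_{k}$ is orthogonal to the former.

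Finally, putting $g=Y_{k}$ in the displayed expansion and using this orthogonality to drop the terms with $j<k$, one gets $\tilde Y_{k}(x)=c(1-x^{2})^{s}\sum_{j\ge k}x^{j}\langle Y_{k},\phi_{j}\rangle$, so that by Cauchy--Schwarz and $\|Y_{k}\|_{L^{2}((2,3))}=1$, for every $x\in(-1,1)$,
\[
|\tilde Y_{k}(x)|\le |c|\sum_{j\ge k}|x|^{j}\|\phi_{j}\|_{L^{2}((2,3))}\le |c|C_{s}\sum_{j\ge k}(|x|/2)^{j}\le 2|c|C_{s}\,2^{-k};
\]
integrating over $B_{1}=(-1,1)$ then gives $\|\tilde Y_{k}\|_{L^{2}(B_{1})}\le C'e^{-C''k}$ with $C''=\log 2$, as claimed. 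The steps I expect to be the main obstacle are (i) the completeness of $\{\phi_{j}\}$, which is what makes Gram--Schmidt deliver a basis of $L^{2}((2,3))$, and (ii) making the term-by-term integration rigorous with an estimate uniform in $x\in(-1,1)$; once these are settled, the exponential decay falls out for free from the geometric factor $(|x|/2)^{j}\le 2^{-j}$.
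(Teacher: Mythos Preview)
Your proof is correct and follows essentially the same route as the paper: expand the Poisson kernel as a geometric series in $x/r$, then Gram--Schmidt on the resulting system $\{r^{-(j+1)}(r^{2}-1)^{-s}\}$ so that the first $k$ coefficients of $\tilde Y_{k}$ vanish, leaving a tail controlled by $2^{-k}$. Your packaging is slightly cleaner (you work directly in the standard $L^{2}$ inner product rather than through the ansatz $Y_{k}=r(r^{2}-1)^{s}g_{k}$ and a weighted inner product), and you also supply the completeness argument via Weierstrass that the paper leaves implicit.
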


\begin{proof}
	In view of \eqref{eq:1-Poisson-formula1}, we want to find \emph{real-valued}
	$Y_{k}$ of the form 
	\begin{equation}
		Y_{k}(r)=r(r^{2}-1)^{s}g_{k}(r).\label{eq:1-Poisson-anzats}
	\end{equation}
	Plugging the ansatz \eqref{eq:1-Poisson-anzats} into \eqref{eq:1-Poisson-formula1},
	we obtain that for $x\in(-1,1)$
	\begin{align}
		\tilde{Y}_{k}(x) & =c(1-x^{2})^{s}\int_{2}^{3}\frac{1}{1-\frac{x}{r}}g_{k}(r)\,dr\nonumber \\
		& =c(1-x^{2})^{s}\int_{2}^{3}\sum_{j=0}^{\infty}\LC\frac{x}{r}\RC^{j}g_{k}(r)\,dr\nonumber \\
		& =c(1-x^{2})^{s}\sum_{j=0}^{\infty}x^{j}\int_{2}^{3}r^{-j}g_{k}(r)\,dr,\label{eq:1-Poisson-formula2}
	\end{align}
	provided $g_{k}\in L^{1}((2,3))$. 
	\begin{subequations}
		In order to derive the desired decaying properties, for each $k\ge1$,
		we will choose $g_{k}$ such that 
		\begin{equation}
			\int_{2}^{3}r^{-j}g_{k}(r)\,dr=0\quad\text{ for all }\;\,0\le j\le k-1.\label{eq:1-basis-arrangement1}
		\end{equation}
		From \eqref{eq:1-Poisson-anzats}, we also require $g_{k}$ to satisfy
		\begin{equation}
			\delta_{k\ell}=\int_{2}^{3}Y_{k}(r)Y_{\ell}(r)\,dr=\int_{2}^{3}r^{2}(r^{2}-1)^{2s}g_{k}(r)g_{\ell}(r)\,dr.\label{eq:1-basis-orthonormality1}
		\end{equation}
	\end{subequations}
	Setting 
	\[
	h_{k}(r):=r^{2}(r^{2}-1)^{2s}g_{k}(r),
	\]
	we can rewrite \eqref{eq:1-basis-arrangement1} and \eqref{eq:1-basis-orthonormality1}
	as 
	\begin{subequations}
		\begin{align}
			\LC h_{k},r^{-j}\RC_{s}=0, & \quad\text{ for all }0\le j\le k-1,\label{eq:1-basis-arrangement2}\\
			\LC h_{k},h_{\ell}\RC_{s}=\delta_{k\ell}, & \quad\text{ for all non-negative integers }k,\ell,\label{eq:1-basis-orthonormality2}
		\end{align}
	\end{subequations}
	where 
	\[
	\LC  h_{1},h_{2}\RC_{s}:=\int_{2}^{3}r^{-2}(r^{2}-1)^{-2s}h_{1}(r)h_{2}(r)\,dr.
	\]
	Using the Gram-Schmidt process, we can choose 
	\[
	h_{k}(r)\in{\rm span}\,\LC\bigcup_{j=0}^{k}\left\{r^{-j}\right\}\RC\quad\text{ for all }k=0,1,2,\cdots
	\]
	which satisfy \eqref{eq:1-basis-arrangement2} and \eqref{eq:1-basis-orthonormality2}.
	In other words, 
	\[
	\left\{ Y_{k}(r)=r^{-1}(r^{2}-1)^{-s}h_{k}(r): \, k=0,1,2,\ldots \right\}
	\]
	forms an orthonormal basis of $L^{2}((2,3))$. 
	
	We observe that 
	\begin{align*}
		&\quad \left|\int_{2}^{3}r^{-j}g_{k}(r)\,dr\right| \\
		& =\left|\int_{2}^{3}\left(r^{-1}(r^{2}-1)^{-s}h_{k}(r)\right)\left(r^{-1}(r^{2}-1)^{-s}r^{-j}\right)\,dr\right|\\
		& \le \LC \int_{2}^{3}r^{-2}(r^{2}-1)^{-2s}|h_{k}(r)|^{2}\,dr\RC^{\frac{1}{2}}\LC\int_{2}^{3}r^{-2-2j}(r^{2}-1)^{-2s}\,dr\RC^{\frac{1}{2}}\\
		& =\LC\int_{2}^{3}r^{-2-2j}(r^{2}-1)^{-2s}\,dr\RC^{\frac{1}{2}}\\
		& \le2^{-1-j},
	\end{align*}
	for all $k>j$. Combining this estimate with \eqref{eq:1-basis-arrangement1}, we
	have 
	\begin{equation}\label{eq:1-basis-arrangement3}
		\left|\int_{2}^{3}r^{-j}g_{k}(r)\,dr\right|\le\mathbbm1_{\{k>j\}}2^{-1-j}.
	\end{equation}
	Plugging \eqref{eq:1-basis-arrangement3} into \eqref{eq:1-Poisson-formula2},
	we obtain that
	\begin{align*}
		\left|\tilde{Y}_{k}(x)\right| & \le|c|\LC 1-x^{2}\RC^{s}\sum_{j=0}^{\infty}|x|^{j}\left|\int_{2}^{3}r^{-j}g_{k}(r)\,dr\right|\\
		& \le C\sum_{j=k+1}^{\infty}2^{-j}=C\frac{2^{-k-1}}{1-\frac{1}{2}}=C2^{-k},
	\end{align*}
	which is our desired result. 
\end{proof}
Given any bounded linear operator $\mathcal{A}:L^{2}((2,3))\rightarrow L^{2}((2,3))$,
we define 
\[
a_{k_{1}}^{k_{2}}:=\LC \mathcal{A}Y_{k_{1}},Y_{k_{2}}\RC _{L^{2}((2,3))}.
\]
Let $\LC a_{k_{1}}^{k_{2}}\RC$ be the tensor with entries $a_{k_{1}}^{k_{2}}$,
and consider the following Banach space: 
\[
X':=\left\{\LC a_{k_{1}}^{k_{2}}\RC :\,  \left\| \LC a_{k_{1}}^{k_{2}}\RC \right\|_{X'}<\infty\right\},
\]
where 
\[
\left\| \LC  a_{k_{1}}^{k_{2}}\RC \right\|_{X'}:=\sup_{k_{1},k_{2}}\LC 1+\max\{k_{1},k_{2}\}\RC^{3}\left|a_{k_{1}}^{k_{2}}\right|.
\]
Similar to Lemma~\ref{lem:opt-matrix-repn}, we can prove the following lemma. 
\begin{lem}
	\label{lem:1-norm-change}We have 
	\begin{equation}
		\|\mathcal{A}\|_{L^{2}(B_{3}\setminus\overline{B_{2}})\rightarrow L^{2}(B_{3}\setminus\overline{B_{2}})}\le2 \left\| \LC a_{k_{1}}^{k_{2}}\RC \right\|_{X'}.\label{eq:1-Hilbert-schmidt}
	\end{equation}
\end{lem}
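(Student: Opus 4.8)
The plan is to repeat the proof of Lemma~\ref{lem:opt-matrix-repn} (cf.\ \cite[(20)]{RS18Instability}) essentially verbatim, using the one-index orthonormal basis $\{Y_k\}_{k\ge0}$ of $L^2((2,3))$ furnished by Proposition~\ref{prop:1-basis-1dim} in place of the basis $\{Y_{mk\ell}\}$ employed when $n\ge2$; as in the rest of the one-dimensional discussion I read $L^2(B_3\setminus\overline{B_2})$ as $L^2((2,3))$. If $M:=\|(a_{k_1}^{k_2})\|_{X'}=\infty$ the inequality \eqref{eq:1-Hilbert-schmidt} is vacuous, so I may assume $M<\infty$, which by definition of the $X'$-norm means $\bigl|a_{k_1}^{k_2}\bigr|\le M(1+\max\{k_1,k_2\})^{-3}$ for all $k_1,k_2\ge0$.

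The first step is to dominate the operator norm by the Hilbert--Schmidt norm: since $\{Y_k\}$ is an orthonormal basis, $(a_{k_1}^{k_2})$ is precisely the matrix of $\mathcal{A}$ in that basis, whence
\[
\|\mathcal{A}\|_{L^2((2,3))\to L^2((2,3))}\ \le\ \|\mathcal{A}\|_{{\rm HS}}\ =\ \Bigl(\sum_{k_1,k_2\ge0}\bigl|a_{k_1}^{k_2}\bigr|^{2}\Bigr)^{1/2}.
\]
The second step is to estimate this double series: grouping the pairs $(k_1,k_2)$ by the value $\sigma:=\max\{k_1,k_2\}$ (there are $2\sigma+1$ of them) and inserting the pointwise bound,
\[
\sum_{k_1,k_2\ge0}\bigl|a_{k_1}^{k_2}\bigr|^{2}\ \le\ M^{2}\sum_{\sigma\ge0}\frac{2\sigma+1}{(1+\sigma)^{6}}\ \le\ M^{2}\sum_{\sigma\ge0}\frac{3}{(1+\sigma)^{5}}\ \le\ 4M^{2},
\]
the last inequality using the crude bound $\sum_{\sigma\ge0}(1+\sigma)^{-5}=1+\sum_{k\ge2}k^{-5}<4/3$. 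Combining the two displays gives $\|\mathcal{A}\|_{L^2((2,3))\to L^2((2,3))}\le 2M$, which is exactly \eqref{eq:1-Hilbert-schmidt}.

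I do not expect a genuine obstacle: the argument is routine, and the only point requiring a bit of care is choosing the estimates on the tail of the series so that the clean constant $2$ actually comes out, which is why I keep the (very lossy) bound on $\sum(1+\sigma)^{-5}$ explicit. If a sharper constant were desired one could bypass the Hilbert--Schmidt norm and instead run a Schur test on the inner sum $\sum_{k_1}a_{k_1}^{k_2}c_{k_1}$, applying Cauchy--Schwarz with the weights $|a_{k_1}^{k_2}|^{1/2}$ and using the uniform bound $\sum_{k\ge0}(1+\max\{j,k\})^{-3}\le\tfrac32$ for every $j\ge0$ (split the sum at $k=j$ and telescope, or compare with $\int(1+t)^{-3}\,dt$); that route yields the improved operator-norm bound $\tfrac32 M$. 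Either way the stated estimate follows.
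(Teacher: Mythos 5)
Your argument is correct and is essentially the same as the paper's: dominate the operator norm by the Hilbert--Schmidt norm, pull out $\sup_{k_1,k_2}(1+\max\{k_1,k_2\})^6\,|a_{k_1}^{k_2}|^2 = M^2$, and then bound the remaining series $\sum_{k_1,k_2}(1+\max\{k_1,k_2\})^{-6}$ by grouping the index pairs by $\sigma=\max\{k_1,k_2\}$ to get something $\le 4$. Your bookkeeping of the $2\sigma+1$ pairs at level $\sigma$ is actually the careful version of the paper's slightly terse counting estimate, and it yields the same constant $2$; the optional Schur-test remark you append is a genuine (minor) alternative but is not what the paper does.
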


\begin{proof}
	In view of the Hilbert-Schmidt norm, we obtain 
	\begin{align}\label{eq:1-Hilbert-schmidt1}
		\begin{split}
			& \|\mathcal{A}\|_{L^{2}(B_{3}\setminus\overline{B_{2}})\rightarrow L^{2}(B_{3}\setminus\overline{B_{2}})}^{2}\le\sum_{k_{1},k_{2}}\left|a_{k_{1}}^{k_{2}}\right|^{2} \\
		    \le & \sup_{k_{1},k_{2}}\LC 1+\max\{k_{1},k_{2}\}\RC^{6}\left|a_{k_{1}}^{k_{2}}\right|^{2}\sum_{k_{1},k_{2}\ge0}\LC 1+\max\{k_{1},k_{2}\}\RC^{-6}.
		\end{split}
	\end{align}
	We also note that 
	\begin{equation}
		\sum_{k_{1},k_{2}\ge0}\LC 1+\max\{k_{1},k_{2}\}\RC^{-6}\le2\sum_{k=0}^{\infty}(1+k)^{-6}\le4.\label{eq:1-Hilbert-schmidt2}
	\end{equation}
	Combining \eqref{eq:1-Hilbert-schmidt1} and \eqref{eq:1-Hilbert-schmidt2} implies \eqref{eq:1-Hilbert-schmidt}. 
\end{proof}

Similar to preceding discussions, let us consider the one spatial dimensional case.

\vspace{2mm}

\noindent $\bullet$ \textbf{Special weak solutions.}

\vspace{2mm}

Let $\chi=\chi(t)\in C_{c}^{\infty}((0,T))$. By the same proof of Lemma~\ref{lem:patch2-veryweak-wellposed}, we can establsih
\begin{lem}
	\label{lem:1-opt-basic-est}If $q\in B_{+,R}^{\infty}$, then there
	exists a unique weak solution $u_{k}$to 
	\[
	\begin{cases}
		\LC \partial_{t}^{2}+(-\Delta)^{s}+q \RC u_{k}=0 & \text{ in }\;\;(B_{1})_{T},\\
		u_{k}(x,t)=\chi(t)\mathbbm1_{(2,3)}Y_{k} & \text{ in }\;\;(\mathbb{R}^{1}\setminus\overline{B_{1}})_{T},\\
		u_{k}=\partial_{t}u_{k}=0 & \text{ in }\;\;\mathbb{R}^{1}\times\{0\},
	\end{cases}
	\]
	such that 
	\[
	\|u_{k}\|_{L^{\infty}(0,T;L^{2}(B_{1}))}\le C_{R,T,s}\|\chi\|_{W^{2,\infty}(0,T)}e^{-c_{s}k}.
	\]
\end{lem}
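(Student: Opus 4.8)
The plan is to follow the proof of Lemma~\ref{lem:patch2-veryweak-wellposed} essentially verbatim, replacing the higher-dimensional basis of Proposition~\ref{prop:opt-special-basis} by the one-dimensional basis constructed in Proposition~\ref{prop:1-basis-1dim}. Concretely, given $k\ge 0$, let $\tilde{Y}_{k}\in H^{s}(\mathbb{R}^{1})$ be the unique solution of the exterior problem \eqref{eq:1-exterior-problem}; by Proposition~\ref{prop:1-basis-1dim} it satisfies $\|\tilde{Y}_{k}\|_{L^{2}(B_{1})}\le C'e^{-C''k}$ with $C',C''$ depending only on $s$, and since $\tilde{Y}_{k}=\mathbbm1_{(2,3)}Y_{k}$ outside $B_{1}$ we also have $\|\tilde{Y}_{k}\|_{L^{2}(\mathbb{R}^{1})}^{2}=\|\tilde{Y}_{k}\|_{L^{2}(B_{1})}^{2}+1\le (C')^{2}+1$, using $\|Y_{k}\|_{L^{2}((2,3))}=1$. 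This is the exact analogue of \eqref{eq:patch2-tildef-def}--\eqref{eq:patch2-veryweak2}, now with exponential decay built in.

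Next I would set $F_{k}(x,t):=-\big(\chi''(t)+\chi(t)q(x)\big)\tilde{Y}_{k}(x)$, which belongs to $L^{2}((B_{1})_{T})$ because $\tilde{Y}_{k}|_{B_{1}}\in L^{2}(B_{1})$, $q\in L^{\infty}$ with $0\le q\le R$, and $\chi\in C_{c}^{\infty}((0,T))$, and satisfies
\[
\|F_{k}\|_{L^{2}((B_{1})_{T})}\le T^{1/2}\big(\|\chi''\|_{L^{\infty}(0,T)}+R\|\chi\|_{L^{\infty}(0,T)}\big)\|\tilde{Y}_{k}\|_{L^{2}(B_{1})}\le C_{R,T}\|\chi\|_{W^{2,\infty}(0,T)}e^{-C''k}.
\]
By Theorem~\ref{thm:well-posedness} (with $\Omega=B_{1}$, zero exterior data and zero initial data) there is a unique weak solution $v_{k}\in L^{2}(0,T;\widetilde{H}^{s}(B_{1}))\cap H^{1}(0,T;L^{2}(B_{1}))$ of the forced problem, with
\[
\|v_{k}\|_{L^{\infty}(0,T;\widetilde{H}^{s}(B_{1}))}+\|\partial_{t}v_{k}\|_{L^{\infty}(0,T;L^{2}(B_{1}))}\le C_{R,T,s}\|F_{k}\|_{L^{2}((B_{1})_{T})}.
\]
Then $u_{k}(x,t):=v_{k}(x,t)+\chi(t)\tilde{Y}_{k}(x)$ is the desired solution: since $(-\Delta)^{s}\tilde{Y}_{k}=0$ in $B_{1}$ one computes $(\partial_{t}^{2}+(-\Delta)^{s}+q)(\chi\tilde{Y}_{k})=\chi''\tilde{Y}_{k}+q\chi\tilde{Y}_{k}=-F_{k}$ in $(B_{1})_{T}$, hence $(\partial_{t}^{2}+(-\Delta)^{s}+q)u_{k}=0$ there; the exterior values of $u_{k}$ are $\chi\tilde{Y}_{k}|_{\mathbb{R}^{1}\setminus\overline{B_{1}}}=\chi\mathbbm1_{(2,3)}Y_{k}$; and since $\chi(0)=\chi'(0)=0$, both $u_{k}$ and $\partial_{t}u_{k}$ vanish at $t=0$. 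Uniqueness of $u_{k}$ follows from the uniqueness of $v_{k}$ in Theorem~\ref{thm:well-posedness}, since the decomposition $u_{k}=v_{k}+\chi\tilde{Y}_{k}$ is forced once the lift $\chi\tilde{Y}_{k}$ is fixed.

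Finally, adding the two pieces and using $\|\chi\|_{L^{\infty}(0,T)}\le\|\chi\|_{W^{2,\infty}(0,T)}$ gives
\[
\|u_{k}\|_{L^{\infty}(0,T;L^{2}(B_{1}))}\le\|v_{k}\|_{L^{\infty}(0,T;L^{2}(B_{1}))}+\|\chi\|_{L^{\infty}(0,T)}\|\tilde{Y}_{k}\|_{L^{2}(B_{1})}\le C_{R,T,s}\|\chi\|_{W^{2,\infty}(0,T)}e^{-c_{s}k},
\]
with $c_{s}:=C''$ the constant of Proposition~\ref{prop:1-basis-1dim}, which depends only on $s$. I do not expect a genuine obstacle: the only place where dimension entered the proof of Lemma~\ref{lem:patch2-veryweak-wellposed} was the exponentially small basis bound, and Proposition~\ref{prop:1-basis-1dim} supplies the one-dimensional replacement; the rest is the well-posedness-plus-lifting scheme used throughout Section~\ref{sec:optimality}. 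The one point that deserves a line of care is that the exterior datum $\chi\mathbbm1_{(2,3)}Y_{k}$ is only $L^{2}$ in space, so the reduction to zero exterior data must be carried out through the explicit lift $\chi\tilde{Y}_{k}$ — which lies in $L^{\infty}(0,T;H^{s}(\mathbb{R}^{1}))$ and makes $F_{k}$ square-integrable on $(B_{1})_{T}$ — rather than by subtracting a smooth compactly supported extension as in Corollary~\ref{thm:well}.
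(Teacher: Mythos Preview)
Your proposal is correct and matches the paper's approach exactly: the paper simply states that Lemma~\ref{lem:1-opt-basic-est} follows ``by the same proof of Lemma~\ref{lem:patch2-veryweak-wellposed}'', and what you have written is precisely that proof specialized to $n=1$, with Proposition~\ref{prop:1-basis-1dim} supplying the exponentially small bound $\|\tilde{Y}_{k}\|_{L^{2}(B_{1})}\le C'e^{-C''k}$ in place of Proposition~\ref{prop:opt-special-basis}. Your closing remark about lifting through $\chi\tilde{Y}_{k}$ rather than a smooth extension is also exactly the mechanism the paper uses in \eqref{eq:patch2-veryweak4}--\eqref{eq:patch2-veryweak5}.
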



\vspace{2mm}

\noindent $\bullet$ \textbf{Matrix representation.}

\vspace{2mm}

Again, consider the mapping 
\[
\Gamma(q)(f):=\chi\Lambda_{q}(\chi f)-\chi\Lambda_{0}(\chi f)\quad\text{ for all }f\in  C_{c}^{\infty}((\mathbb{R}^{1}\setminus\overline{B_{1}})_{T}).
\]
In this case, we define 
\[
\Gamma_{k_{1}}^{k_{2}}(q)(t):=\LC \Gamma(q)Y_{k_{1}},Y_{k_{2}}\RC_{L^{2}((2,3))}(t).
\]
Likewise, $\Lambda_{q}$ is self-adjoint, and we have
\begin{equation*}
	\Gamma_{k_{1}}^{k_{2}}(q)(t)=\Gamma_{k_{2}}^{k_{1}}(q)(t).
\end{equation*}
Following the same proof of Lemma~\ref{lem:opt-matrix-repn-est}, we can prove that
\begin{lem}
	\label{lem:1-matrix-repn-basic-est}Given any $q\in B_{+,R}^{\infty}$,
	there exist constants $C_{R,T,s}>1$ and $c_{s}'>0$ such that 
	\[
	\sup_{t\in(0,T)}\left|\Gamma_{k_{1}}^{k_{2}}(q)(t)\right|\le C_{R,T,s}'\|\chi\|_{W^{2,\infty}(0,T)}^{2} e^{-c_{s}'\sigma},
	\]
	where $\sigma:=\max\{k_{1},k_{2}\}$. 
\end{lem}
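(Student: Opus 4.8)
The plan is to mirror, \emph{mutatis mutandis}, the proof of Lemma~\ref{lem:opt-matrix-repn-est} in the one-dimensional setting, using the one-dimensional orthonormal basis $\{Y_k\}$ of $L^2((2,3))$ from Proposition~\ref{prop:1-basis-1dim} in place of $\{Y_{mk\ell}\}$, and Lemma~\ref{lem:1-opt-basic-est} in place of \eqref{lem:opt-basic-est}. First I would write $w_{k_1}:=u_{k_1}-\mathring{u}_{k_1}$, where $u_{k_1}$ solves the exterior value problem of Lemma~\ref{lem:1-opt-basic-est} with potential $q$, and $\mathring{u}_{k_1}$ solves the same problem with $q\equiv 0$; then $w_{k_1}$ is supported (in space) in $\overline{B_1}$ for each time, so in particular $w_{k_1}=0$ in $(2,3)\times(0,T)$. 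The difference map satisfies $\Gamma(q)(Y_{k_1})=\chi\,(-\Delta)^s w_{k_1}|_{((2,3))_T}$, so
\[
\sup_{t\in(0,T)}\left|\Gamma_{k_1}^{k_2}(q)(t)\right|\le\sup_{t\in(0,T)}\left\|\Gamma(q)(Y_{k_1})\right\|_{L^2((2,3))}(t)\le\|\chi\|_{L^\infty(0,T)}\sup_{t\in(0,T)}\left\|(-\Delta)^s w_{k_1}(\cdot,t)\right\|_{L^2((2,3))}.
\]

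Next I would use the singular-integral representation of $(-\Delta)^s$ (as in the proof of Lemma~\ref{lem:patch2-crucial-regularity-DN} and Lemma~\ref{lem:opt-matrix-repn-est}): for $x\in(2,3)$,
\[
(-\Delta)^s w_{k_1}(x,t)=C_{s}\int_{\mathbb{R}}\frac{w_{k_1}(x,t)-w_{k_1}(y,t)}{|x-y|^{1+2s}}\,dy=-C_s\int_{B_1}\frac{w_{k_1}(y,t)}{|x-y|^{1+2s}}\,dy,
\]
since $w_{k_1}(x,t)=0$ for $x\in(2,3)$ and $w_{k_1}(y,t)=0$ for $y\notin B_1$. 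For $x\in(2,3)$ and $y\in B_1=(-1,1)$ one has $|x-y|\ge 1$, so $|(-\Delta)^s w_{k_1}(x,t)|\le C_s\int_{B_1}|w_{k_1}(y,t)|\,dy\le C_s\|w_{k_1}(\cdot,t)\|_{L^2(B_1)}$ by Cauchy--Schwarz; integrating over the bounded interval $(2,3)$ gives $\|(-\Delta)^s w_{k_1}(\cdot,t)\|_{L^2((2,3))}\le C_s\|w_{k_1}(\cdot,t)\|_{L^2(B_1)}$. Combining with Lemma~\ref{lem:1-opt-basic-est} applied to both $u_{k_1}$ and $\mathring{u}_{k_1}$ (triangle inequality) yields
\[
\sup_{t\in(0,T)}\left|\Gamma_{k_1}^{k_2}(q)(t)\right|\le C_{R,T,s}\|\chi\|_{W^{2,\infty}(0,T)}^2\,e^{-c_s k_1}.
\]

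Finally, since $q$ is real-valued, $\Gamma(q)$ is self-adjoint, so $\Gamma_{k_1}^{k_2}(q)(t)=\Gamma_{k_2}^{k_1}(q)(t)$, and applying the bound just obtained with the roles of $k_1$ and $k_2$ interchanged gives $\sup_{t}|\Gamma_{k_1}^{k_2}(q)(t)|\le C_{R,T,s}\|\chi\|_{W^{2,\infty}(0,T)}^2 e^{-c_s k_2}$. Taking the better of the two bounds replaces $e^{-c_s k_i}$ by $e^{-c_s'\sigma}$ with $\sigma=\max\{k_1,k_2\}$ (e.g.\ $\min\{e^{-c_s k_1},e^{-c_s k_2}\}\le e^{-c_s\sigma}$), which is the desired estimate. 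I do not anticipate a genuine obstacle here: the only place requiring care is verifying that $w_{k_1}$ vanishes in space outside $\overline{B_1}$ uniformly in $t$ (so that the singular-integral kernel is harmless and the distance lower bound $|x-y|\ge1$ applies) — this follows because both $u_{k_1}$ and $\mathring{u}_{k_1}$ agree with $\chi(t)\,\mathbbm1_{(2,3)}Y_{k_1}$ in $(\mathbb{R}^1\setminus\overline{B_1})_T$, so their difference is supported in $\overline{B_1}$ — and checking that the one-dimensional analogues of the well-posedness and exterior-data constructions (Lemma~\ref{lem:1-opt-basic-est}, which is asserted to hold by the same proof as Lemma~\ref{lem:patch2-veryweak-wellposed}) indeed carry the stated exponential decay, which is exactly the content of Proposition~\ref{prop:1-basis-1dim}.
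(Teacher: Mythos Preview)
Your proposal is correct and follows essentially the same approach as the paper, which simply states that the lemma is proved by repeating the argument of Lemma~\ref{lem:opt-matrix-repn-est} in the one-dimensional setting. The steps you outline---writing $w_{k_1}=u_{k_1}-\mathring{u}_{k_1}$, using the singular-integral representation together with the separation $|x-y|\ge 1$ for $x\in(2,3)$, $y\in B_1$, invoking Lemma~\ref{lem:1-opt-basic-est}, and then exploiting self-adjointness to symmetrize---are exactly the ingredients of that proof.
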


\vspace{2mm}

\noindent $\bullet$ \textbf{Construction of a family of $\delta$-net.}

\vspace{2mm}

We now construct a $\delta$-net for $(\Gamma_{k_{1}}^{k_{2}}(B_{+,R}^{\infty})(t))$. 
\begin{lem}
	\label{lem:1-delta-net}Given any $R>1$ and $0<\delta<\|\chi\|_{W^{2,\infty}(0,T)}^{2}$.
	There exists a family $\left\{ Y(t): \,  t\in(0,T) \right\}$
	such that each $Y(t)$ is a $\delta$-net of $\LC (\Gamma_{k_{1}}^{k_{2}}(B_{+,R}^{\infty})(t)), \norm{\cdot}_{X'}\RC$
	and satisfies 
	\[
	\log|Y(t)|\le K_{s}\log^{3}\LC\frac{K_{R,T,s}\|\chi\|_{W^{2,\infty}(0,T)}^{2}}{\delta}\RC
	\]
	for some positive constants $K_{s}$ and $K_{R,T,s}$, where $|Y(t)|$ denotes the cardinality of $Y(t)$. 
\end{lem}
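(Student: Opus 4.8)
The plan is to reproduce the proof of Lemma~\ref{lem:opt-delta-net} almost verbatim, with the $6$-tuple index $(m_{1},k_{1},\ell_{1},m_{2},k_{2},\ell_{2})$ replaced throughout by the pair $(k_{1},k_{2})$, the polynomial weight $(1+\sigma)^{n+2}$ occurring in the $X$-norm replaced by $(1+\sigma)^{3}$ (the $n=1$ instance of $n+2$), and Lemma~\ref{lem:1-matrix-repn-basic-est} used in place of Lemma~\ref{lem:opt-matrix-repn-est}. The only genuinely new ingredient is the combinatorial count of pairs $(k_{1},k_{2})$, which is far simpler than in higher dimensions, and it is exactly this count that produces the exponent $3=(2n+1)|_{n=1}$ in the final estimate.

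First I would carry out the initialization step. Given $0<\delta<\|\chi\|_{W^{2,\infty}(0,T)}^{2}$, let $\tilde{\sigma}>0$ be the unique solution of
\[
(1+\tilde{\sigma})^{3}e^{-c_{s}'\tilde{\sigma}}=\frac{\delta}{C_{R,T,s}'\|\chi\|_{W^{2,\infty}(0,T)}^{2}},
\]
where $C_{R,T,s}'>1$ and $c_{s}'>0$ are the constants from Lemma~\ref{lem:1-matrix-repn-basic-est}. Splitting off half of the exponent exactly as in Step~1 of the proof of Lemma~\ref{lem:opt-delta-net}, one obtains a constant $c_{s}''$ so that, with $\sigma_{*}:=\lfloor\tilde{\sigma}\rfloor$, one has $1+\sigma_{*}\le c_{s}''\log\LC C_{s}''C_{R,T,s}'\|\chi\|_{W^{2,\infty}(0,T)}^{2}/\delta\RC$ and every integer $\sigma\ge\sigma_{*}+1$ satisfies $(1+\sigma)^{3}e^{-c_{s}'\sigma}\le\delta/(C_{R,T,s}'\|\chi\|_{W^{2,\infty}(0,T)}^{2})$. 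Then I would build the net: set $\delta':=\delta(1+\sigma_{*})^{-3}$, put $Y':=\{a\in\delta'\mathbb{Z}:\ |a|\le C_{R,T,s}'\|\chi\|_{W^{2,\infty}(0,T)}^{2}(1+\sigma_{*})^{-3}\}$, and let $Y(t)$ consist of all tensors $\LC b_{k_{1}}^{k_{2}}(t)\RC$ with $b_{k_{1}}^{k_{2}}(t)\in Y'$ whenever $\sigma:=\max\{k_{1},k_{2}\}\le\sigma_{*}$ and $b_{k_{1}}^{k_{2}}(t)=0$ otherwise. To verify that $Y(t)$ is a $\delta$-net for $\LC(\Gamma_{k_{1}}^{k_{2}}(B_{+,R}^{\infty})(t)),\|\cdot\|_{X'}\RC$ I would repeat the two-case argument: for $\sigma\le\sigma_{*}$ choose the closest point of $Y'$, so $(1+\sigma)^{3}|b_{k_{1}}^{k_{2}}(t)-\Gamma_{k_{1}}^{k_{2}}(q)(t)|\le(1+\sigma_{*})^{3}\delta'=\delta$; for $\sigma\ge\sigma_{*}+1$ take $0$ and combine Lemma~\ref{lem:1-matrix-repn-basic-est} with the bound $(1+\sigma)^{3}e^{-c_{s}'\sigma}\le\delta/(C_{R,T,s}'\|\chi\|_{W^{2,\infty}(0,T)}^{2})$.

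Finally I would count. For each $\sigma\ge0$ the number of pairs $(k_{1},k_{2})$ with $\max\{k_{1},k_{2}\}=\sigma$ equals $2\sigma+1$, so $N_{*}:=\sum_{\sigma=0}^{\sigma_{*}}(2\sigma+1)=(1+\sigma_{*})^{2}$. Since $|Y(t)|=|Y'|^{N_{*}}$ with $|Y'|\le 1+2C_{R,T,s}'\|\chi\|_{W^{2,\infty}(0,T)}^{2}/\delta$, this gives
\[
\log|Y(t)|=N_{*}\log|Y'|\le(1+\sigma_{*})^{2}\log\LC 1+\frac{2C_{R,T,s}'\|\chi\|_{W^{2,\infty}(0,T)}^{2}}{\delta}\RC,
\]
and inserting the bound on $1+\sigma_{*}$ from the initialization step and absorbing constants into $K_{s}$ and $K_{R,T,s}$ yields $\log|Y(t)|\le K_{s}\log^{3}\LC K_{R,T,s}\|\chi\|_{W^{2,\infty}(0,T)}^{2}/\delta\RC$, as claimed. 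I do not expect a real obstacle here; the one point that will need care is simply tracking where the three logarithmic powers come from — two from $N_{*}=(1+\sigma_{*})^{2}$ and one more from $\log|Y'|$ — which is precisely the $n=1$ case of the general exponent $2n+1$ appearing in Lemma~\ref{lem:opt-delta-net}.
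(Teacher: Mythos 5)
Your proposal is correct and follows essentially the same route as the paper: the paper itself says the proof is identical to that of Lemma~\ref{lem:opt-delta-net} except for the counting of indices in Step 4, and your count $N_{\sigma}=2\sigma+1\le 2(1+\sigma)$ for pairs $(k_1,k_2)$ with $\max\{k_1,k_2\}=\sigma$, giving $N_{*}\le(1+\sigma_{*})^{2}$, is precisely the adjustment the paper makes. Your bookkeeping of where the three logarithmic powers arise (two from $N_{*}$, one from $\log|Y'|$) matches the intended exponent $2n+1|_{n=1}=3$.
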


\begin{proof}
	The proof of Lemma~\ref{lem:1-delta-net} is almost identically to
	the proof of Lemma~\ref{lem:opt-delta-net} with some minor adjustments
	in Step 4. Here, let $N_{\sigma}$ be the number of 2-tuples $(k_{1},k_{2})$
	with $\max\{k_{1},k_{2}\}=\sigma$. In this case, we can easily obtain
	\[
	N_{\sigma}\le2(1+\sigma)\le8(1+\sigma)^{2n-1}
	\]
	with $n=1$. 
\end{proof}

\begin{proof}[Proof of Theorem~\ref{thm:1dim-main-instability}]
Finally, we can prove Theorem~\ref{thm:1dim-main-instability}
by following the lines in the proof of Theorem~{\rm \ref{thm:main-instability}}.
\end{proof}

\appendix

\section{Proofs related to the forward problem}\label{Appendix}

     In the end of this work, we prove Theorem \ref{thm:well-posedness} in details for the self-containedness.
	The proof of Theorem \ref{thm:well-posedness} is similar to the proof of the case $s=1$, i.e., the well-posedness of the classical wave equation. The main difference is that the estimates and results hold in the fractional Sobolev space. Here we will utilize similar ideas shown in \cite[Chapter 7]{Evan}: The \emph{Galerkin approximation}.

We now set up the Galerkin approximation for the fractional wave equation. To this end, let us consider an eigenbasis $\{w_k\}_{k\in \N}$ associated with the Dirichlet fractional Laplacian in a bounded domain $\Omega$, that is, 
\begin{align*}
	\begin{cases}
		(-\Delta)^s w_k =\lambda_k w_k & \text{ in } \Omega , \\
		w_k =0 & \text{ in }\Omega_e.
	\end{cases}
\end{align*}
Moreover, we can normalize these eigenfunctions so that 
\begin{align}\label{orthogonal H^s}
	\{w_k\}_{k\in \N}\text{ be an orthogonal basis in  }\wt H^s(\Omega),
\end{align} and 
\begin{align}\label{orthonormal L^2}
	\{w_k\}_{k\in \N} \text{ be an orthonormal basis in } L^2(\Omega).
\end{align} Given any fixed integer $m\in \N$, consider the function 
\begin{align}\label{G-approximate solution}
	\bm{v}_m(t):=\sum_{k=1}^m d^k_m w_k,
\end{align}
where the coefficients $d^k_m(t)$ ($0\leq t\leq T$, $k = 1,\ldots, m$) satisfy
\begin{align}\label{G-condition 1}
	\begin{cases}
		d_m^k (0)= (\wt \varphi , w_k), \\
		\LC d_m^k \RC ' (0)=(\wt \psi , w_k),
	\end{cases}
\end{align}
and, for $0\leq t\leq T$,
\begin{align}\label{G-condition 2}
	\LC  \bm{v}_m'' , w_k \RC _{L^2(\Omega)}+B[\bm{v}_m, w_k ;t]=\LC 
	\bm{\wt F}, w_k \RC_{L^2(\Omega)}
	\end{align}
with $k=1,\ldots, m$. Let us split the proof of Theorem \ref{thm:well-posedness} into the following lemmas.

\begin{lem}[Existence of the approximate solution]
	For any $m\in \N$, there exists a unique function $\bm{v}_m$ of the form \eqref{G-approximate solution} satisfying \eqref{G-condition 1} and \eqref{G-condition 2}.
\end{lem}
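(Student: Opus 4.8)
The plan is to substitute the ansatz \eqref{G-approximate solution} into the Galerkin equations \eqref{G-condition 2} and thereby reduce the problem to a linear system of $m$ second-order ordinary differential equations for the scalar coefficients $d_m^1,\ldots,d_m^m$, to which standard ODE theory applies.

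First I would exploit the orthonormality \eqref{orthonormal L^2}: for $\bm{v}_m(t)=\sum_{l=1}^m d_m^l(t)w_l$ one has $\LC \bm{v}_m''(t),w_k\RC_{L^2(\Omega)}=\LC d_m^k\RC''(t)$, while bilinearity gives $B[\bm{v}_m,w_k;t]=\sum_{l=1}^m d_m^l(t)\,B[w_l,w_k;t]$. Since $(-\Delta)^s w_l=\lambda_l w_l$ in $\Omega$ with $w_l=0$ in $\Omega_e$, integration by parts yields $\int_{\R^n}(-\Delta)^{s/2}w_l\,(-\Delta)^{s/2}w_k\,dx=\lambda_l\LC w_l,w_k\RC_{L^2(\Omega)}=\lambda_l\delta_{lk}$, so that
\[
B[w_l,w_k;t]=\lambda_l\delta_{lk}+e_{lk},\qquad e_{lk}:=\int_\Omega q\,w_l w_k\,dx,
\]
where $(e_{lk})$ is a symmetric $m\times m$ matrix with $|e_{lk}|\le\|q\|_{L^\infty(\Omega)}$, and which is independent of $t$ because $q=q(x)$. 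Writing $f_k(t):=\LC \bm{\wt F}(t),w_k\RC_{L^2(\Omega)}\in L^2(0,T)$ (as $\bm{\wt F}\in L^2(0,T;L^2(\Omega))$), the system \eqref{G-condition 2} becomes
\[
\LC d_m^k\RC''(t)+\lambda_k d_m^k(t)+\sum_{l=1}^m e_{kl}d_m^l(t)=f_k(t),\qquad k=1,\ldots,m,
\]
supplemented with the initial data \eqref{G-condition 1}.

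Next I would recast this as a first-order linear system $\mathbf{y}'(t)=A\,\mathbf{y}(t)+\mathbf{g}(t)$ on $[0,T]$, where $\mathbf{y}$ collects $\LC d_m^1,\ldots,d_m^m,\LC d_m^1\RC',\ldots,\LC d_m^m\RC'\RC$, the constant matrix $A\in\R^{2m\times 2m}$ is assembled from $\mathrm{diag}(\lambda_k)$ and $(e_{kl})$, and $\mathbf{g}\in L^2(0,T;\R^{2m})$ is built from the $f_k$, with $\mathbf{y}(0)$ prescribed by \eqref{G-condition 1}. By the Carath\'eodory existence and uniqueness theorem for linear systems with $L^1$ (hence $L^2$) forcing---equivalently, via the variation-of-constants formula $\mathbf{y}(t)=e^{tA}\mathbf{y}(0)+\int_0^t e^{(t-\tau)A}\mathbf{g}(\tau)\,d\tau$---there is a unique absolutely continuous solution $\mathbf{y}$ on $[0,T]$. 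Then each $d_m^k$ and its derivative are absolutely continuous, and the identity $\LC d_m^k\RC''=f_k-\lambda_k d_m^k-\sum_l e_{kl}d_m^l\in L^2(0,T)$ shows $d_m^k\in H^2(0,T)$; setting $\bm{v}_m(t):=\sum_{k=1}^m d_m^k(t)w_k$ produces the desired approximate solution, and uniqueness is inherited from that of the ODE system.

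I do not expect a genuine obstacle here: everything is routine linear ODE theory. The only point that deserves a word of care is that $\bm{\wt F}$ is merely $L^2$ in time, so the reduced system has $L^2$ rather than continuous forcing and one should invoke the Carath\'eodory form of the existence theorem rather than classical Picard--Lindel\"of. The substantive analysis---the energy estimates \eqref{eq:energy-est0} and the passage to the limit $m\to\infty$---is deferred to the subsequent lemmas.
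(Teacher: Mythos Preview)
Your proposal is correct and follows essentially the same approach as the paper: substitute the ansatz into \eqref{G-condition 2}, use orthonormality to reduce to a linear second-order ODE system for the coefficients $d_m^k$, and invoke standard linear ODE theory with the initial data \eqref{G-condition 1}. Your treatment is in fact slightly more careful than the paper's, which simply appeals to ``standard ODE theory'' for a $C^2$ solution; your observation that $\bm{\wt F}$ is only $L^2$ in time and hence Carath\'eodory-type existence (or the variation-of-constants formula) is the appropriate tool is well taken.
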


\begin{proof}
	Due to the orthonormality property \eqref{orthonormal L^2} of $\{w_k\}_{k\in \N}\subset L^2(\Omega)$, we have 
	\begin{align*}
		\LC \bm{v}_m''(t), w_k \RC _{L^2(\Omega)}= \LC d_m^k \RC '' (t).
	\end{align*}
	In addition, we have 
	\begin{align*}
		B[\bm{v}_m, w_k ;t] =\sum _{\ell=1}^m e^{k\ell}d^\ell _m (t),
	\end{align*}
	where $e^{k\ell}:=B[w_\ell,w_k]$ for $k,\ell=1,\ldots, m$. Let us write $F^k(t):=(\bm{\wt F}(t), w_k)$ for $k=1,\ldots,m$. As a result, by using \eqref{G-condition 2} becomes the linear system of ordinary differential equation (ODE)
	\begin{align}\label{G-condition 3}
		\LC d_m^k  \RC ''(t) + \sum_{\ell=1}^m e^{k\ell}d^\ell _m (t)=F^k(t), \text{ for }0\leq t\leq T, \ k=1,\ldots,m,
	\end{align}
	with the initial conditions \eqref{G-condition 1}. Via the standard ODE theory, there exists a unique $C^2$ solution $d_m(t)=(d_m^1(t),\ldots, d^m_m(t))$ satisfying \eqref{G-condition 1}, and solving \eqref{G-condition 3} for $0\leq t\leq T$.
\end{proof}	

Our next goal is to take $m\to \infty$, whenever we have a suitable energy estimate, uniform in $m\in\N$.

\begin{lem}[Energy estimate]
	Under the assumptions of Theorem~{\rm \ref{thm:well-posedness}}, there exists a constant $C>0$, independent of $m\in \N$, such that 
	\begin{align}\label{energy estimate of v_m}
		\begin{split}
		 &	\max_{0\leq t\leq T}\LC  \norm{\bm{v}_m(t)}_{\wt H^s(\Omega)} +\norm{\bm{v}_m'(t)}_{L^2(\Omega)} \RC + \norm{\bm{v}_m''}_{L^2(0,T;H^{-s}(\Omega))} \\
			& \qquad \leq C \LC \norm{{\wt F}}_{L^2 (0,T;L^2(\Omega))} + \norm{\wt\varphi}_{\wt H^s(\Omega)} +\norm{\psi}_{L^2(\Omega)} \RC
		\end{split}
	\end{align}
    for all $m\in \N$.
\end{lem}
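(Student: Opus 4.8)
The plan is to follow the standard Galerkin energy method (as in \cite[Chapter 7]{Evan}), adapted to the nonlocal setting. First I would multiply the identity \eqref{G-condition 2} by $(d_m^k)'(t)$ and sum over $k=1,\dots,m$; since $\bm{v}_m(t)=\sum_{k=1}^m d_m^k(t)w_k$ is a finite sum with $C^2$ coefficients, this manipulation is legitimate and yields
\[
\LC \bm{v}_m''(t),\bm{v}_m'(t)\RC_{L^2(\Omega)}+B[\bm{v}_m(t),\bm{v}_m'(t);t]=\LC \bm{\wt F}(t),\bm{v}_m'(t)\RC_{L^2(\Omega)},\quad 0\le t\le T.
\]
Using $\LC \bm{v}_m'',\bm{v}_m'\RC_{L^2(\Omega)}=\tfrac12\tfrac{d}{dt}\|\bm{v}_m'\|_{L^2(\Omega)}^2$ and, since $q$ is time-independent, writing $B[\bm{v}_m,\bm{v}_m';t]=\tfrac12\tfrac{d}{dt}\|(-\Delta)^{s/2}\bm{v}_m\|_{L^2(\R^n)}^2+\int_\Omega q\,\bm{v}_m\bm{v}_m'\,dx$, I would introduce the energy $E_m(t):=\|\bm{v}_m'(t)\|_{L^2(\Omega)}^2+\|(-\Delta)^{s/2}\bm{v}_m(t)\|_{L^2(\R^n)}^2$ and obtain $\tfrac12 E_m'(t)\le\|\bm{\wt F}(t)\|_{L^2(\Omega)}\|\bm{v}_m'(t)\|_{L^2(\Omega)}+\|q\|_{L^\infty(\Omega)}\|\bm{v}_m(t)\|_{L^2(\Omega)}\|\bm{v}_m'(t)\|_{L^2(\Omega)}$.

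Next I would close the estimate by a Gr\"onwall argument. Because $q$ may change sign it cannot be absorbed into the energy, so I keep the lower order term $\|\bm{v}_m\|_{L^2(\Omega)}$ and control it separately via $\tfrac{d}{dt}\|\bm{v}_m\|_{L^2(\Omega)}^2=2\LC\bm{v}_m,\bm{v}_m'\RC_{L^2(\Omega)}\le\|\bm{v}_m\|_{L^2(\Omega)}^2+E_m(t)$. Adding this to the bound for $E_m'$ and using Young's inequality gives, with $\eta_m(t):=E_m(t)+\|\bm{v}_m(t)\|_{L^2(\Omega)}^2$,
\[
\eta_m'(t)\le C\LC\eta_m(t)+\|\bm{\wt F}(t)\|_{L^2(\Omega)}^2\RC,
\]
so Gr\"onwall's inequality yields $\max_{0\le t\le T}\eta_m(t)\le C\LC\eta_m(0)+\|\bm{\wt F}\|_{L^2(0,T;L^2(\Omega))}^2\RC$ with $C=C(T,\|q\|_{L^\infty(\Omega)})$ independent of $m$. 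For the initial data, since $\{w_k\}$ is orthogonal in $\wt H^s(\Omega)$ and orthonormal in $L^2(\Omega)$ by \eqref{orthogonal H^s}--\eqref{orthonormal L^2}, the functions $\bm{v}_m(0)=\sum_{k=1}^m(\wt\varphi,w_k)w_k$ and $\bm{v}_m'(0)=\sum_{k=1}^m(\wt\psi,w_k)w_k$ are the truncations of the eigenfunction expansions of $\wt\varphi$ and $\wt\psi$, whence $\|\bm{v}_m(0)\|_{\wt H^s(\Omega)}\le\|\wt\varphi\|_{\wt H^s(\Omega)}$ and $\|\bm{v}_m'(0)\|_{L^2(\Omega)}\le\|\wt\psi\|_{L^2(\Omega)}$. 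Using the Poincar\'e-type equivalence $\|u\|_{\wt H^s(\Omega)}\simeq\|(-\Delta)^{s/2}u\|_{L^2(\R^n)}$ for $u$ supported in $\overline\Omega$ (valid since $\Omega$ is bounded), this controls $\eta_m(0)$, and taking square roots produces the first two terms of \eqref{energy estimate of v_m}.

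Finally, for $\|\bm{v}_m''\|_{L^2(0,T;H^{-s}(\Omega))}$ I would fix $\phi\in\wt H^s(\Omega)$ with $\|\phi\|_{\wt H^s(\Omega)}\le1$ and decompose $\phi=\phi_m^1+\phi_m^2$, where $\phi_m^1$ is the projection of $\phi$ onto $\spa\{w_1,\dots,w_m\}$; orthogonality in $\wt H^s(\Omega)$ gives $\|\phi_m^1\|_{\wt H^s(\Omega)}\le1$. Since $\bm{v}_m''(t)\in\spa\{w_1,\dots,w_m\}$, \eqref{G-condition 2} yields $\LC\bm{v}_m''(t),\phi\RC_{L^2(\Omega)}=\LC\bm{v}_m''(t),\phi_m^1\RC_{L^2(\Omega)}=-B[\bm{v}_m(t),\phi_m^1;t]+\LC\bm{\wt F}(t),\phi_m^1\RC_{L^2(\Omega)}$, and the boundedness of $B[\cdot,\cdot;t]$ on $\wt H^s(\Omega)$ (from $|B[u,v;t]|\le\|(-\Delta)^{s/2}u\|_{L^2(\R^n)}\|(-\Delta)^{s/2}v\|_{L^2(\R^n)}+\|q\|_{L^\infty(\Omega)}\|u\|_{L^2(\Omega)}\|v\|_{L^2(\Omega)}$) gives $\|\bm{v}_m''(t)\|_{H^{-s}(\Omega)}\le C\LC\|\bm{v}_m(t)\|_{\wt H^s(\Omega)}+\|\bm{\wt F}(t)\|_{L^2(\Omega)}\RC$. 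Squaring, integrating over $(0,T)$, and inserting the bound already obtained for $\sup_{t}\|\bm{v}_m(t)\|_{\wt H^s(\Omega)}$ completes the proof. I expect the two genuinely delicate points to be: (i) justifying the coercivity/Poincar\'e equivalence between $\|(-\Delta)^{s/2}\cdot\|_{L^2(\R^n)}$ and $\|\cdot\|_{\wt H^s(\Omega)}$ on functions supported in $\overline\Omega$ (where boundedness of $\Omega$ is essential), and (ii) handling the sign-indefinite potential $q$, which forces the two-step Gr\"onwall argument rather than a direct energy identity; once these are settled the remaining estimates are routine.
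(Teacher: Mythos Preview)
Your proposal is correct and follows essentially the same Galerkin energy argument as the paper. The only cosmetic difference is that the paper invokes the Hardy--Littlewood--Sobolev (Poincar\'e-type) inequality $\|\bm{v}_m\|_{L^2(\Omega)}\le C_{n,s}\|(-\Delta)^{s/2}\bm{v}_m\|_{L^2(\R^n)}$ immediately to absorb the $q$-term into the energy $E_m$ and run a single Gr\"onwall step, whereas you track $\|\bm{v}_m\|_{L^2(\Omega)}^2$ separately and close with a two-step Gr\"onwall---both routes are equivalent and your identification of the Poincar\'e equivalence as the key structural input is exactly right.
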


\begin{proof}
	We divide the proof into several steps.
	
	\vspace{3mm}
	
	\noindent{\it Step 1. Basic estimates.}
	
	\vspace{3mm}
	
	Multiplying the equation \eqref{G-condition 2} by $\LC d_m^k\RC ' (t)$, and summing over $k=1,\ldots,m$, with the condition \eqref{orthonormal L^2} at hand, we have 
	\begin{align}\label{G-condition 4}
		\LC \bm{v}_m'', \bm{v}_m' \RC_{L^2(\Omega)} + B[\bm{v}_m, \bm{v}_m';t]=\LC \bm{\wt F}, \bm{v}_m'\RC_{L^2(\Omega)}
 	\end{align}
    for a.e. $0\leq t\leq T$. Note that the first term of \eqref{G-condition 4} can be written as 
    \begin{align}\label{G-condition 5}
    	\LC \bm{v}_m'', \bm{v}_m' \RC_{L^2(\Omega)}=\frac{d}{dt}\LC\frac{1}{2}\norm{\bm{v}_m'}^2 _{L^2(\Omega)} \RC.
    \end{align}
    On the other hand, we can express
    \begin{align}\label{G-condition 6}
    	\begin{split}
    		B[\bm{v}_m,\bm{v}_m';t]=&\int_{\R^n} (-\Delta)^{s/2}\bm{v}_m (-\Delta)^{s/2}\bm{v}_m' \, dx +\int_{\Omega} q \bm{v}_m \bm{v}_m' \, dx \\
    		=& \frac{d}{dt}\LC \frac{1}{2} \int_{\R^n} |(-\Delta)^{s/2} \bm{v}_m|^2\, dx \RC+\int_{\Omega} q \bm{v}_m \bm{v}_m'\, dx.
    	\end{split}
    \end{align}
     Meanwhile, we recall that the Hardy-Littlewood-Sobolev inequality
     \begin{align}\label{H-L-S inequality}
     	\norm{\bm{v}_m}_{L^2(\Omega)}\leq C \norm{\bm{v}_m}_{L^\frac{2n}{n-s}(\R^n)}\leq C_{n,s}\norm{(-\Delta)^{s/2} \bm{v}_m}_{L^2(\R^n)},
     \end{align}
     holds for $\bm{v}_m \in \wt H^s(\Omega)$, see e.g. \cite[Proposition 15.5]{ponce2016elliptic}. Indeed, the Hardy-Littlewood-Sobolev inequality also can be further refined in terms of fractional gradient of order $s$ (a.k.a.) $s$-gradient, see e.g. \cite[Section 15.2]{ponce2016elliptic} for more details. 
     Putting together \eqref{G-condition 4}, \eqref{G-condition 5}, \eqref{G-condition 6}, and \eqref{H-L-S inequality}, we can derive the following inequality 
    \begin{align}\label{G-condition 7}
    \begin{split}
    		&\frac{d}{dt} \LC \norm{\bm{v}_m'}_{L^2(\Omega)}^2 +\norm{(-\Delta)^{s/2}\bm{v}_m}_{L^2(\R^n)}^2 \RC \\
    	\leq & C \LC  \norm{\bm{v}_m'}_{L^2(\Omega)}^2 +\norm{(-\Delta)^{s/2}\bm{v}_m}_{L^2(\R^n)}^2 +\norm{\wt F}_{L^2(\Omega)}^2 \RC ,
    \end{split}
    \end{align}
    for some constant $C>0$.
    
    \vspace{3mm}
    
    \noindent{\it Step 2. Gronwall inequality.}
    
    \vspace{3mm}
    
    We next let 
    \begin{align}\label{eta(t)}
    	\eta (t):=\norm{\bm{v}_m'(t)}_{L^2(\Omega)}^2 +\norm{(-\Delta)^{s/2}\bm{v}_m(t)}_{L^2(\R^n)}^2,
    \end{align} 
     and 
     \begin{align}\label{zeta(t)}
     	\zeta(t):=\norm{\wt F(t)}_{L^2(\Omega)}^2,
     \end{align}
     for $0\leq t\leq T$. Then \eqref{G-condition 7} yields that 
     \begin{align*}
      \eta'(t)\leq C_1 \eta (t)+ C_2 \zeta (t), \text{ for }0\leq t\leq T,
     \end{align*}
    for some constants $C_1,C_2>0$. Therefore, the Gronwall's inequality implies that 
    \begin{align}\label{G-condition 8}
    	\eta (t)\leq e^{C_1 t}\LC \eta(0) +C_2 \int_0^t \zeta(s)\, ds \RC, \text{ for }0\leq t \leq T.
    \end{align}
    On the other hand, 
    \begin{align*}
    	\eta(0)=&\norm{\bm{v}_m'(0)}_{L^2(\Omega)}^2 +\norm{(-\Delta)^{s/2}\bm{v}_m(0)}_{L^2(\R^n)}^2 \\
    	\leq & C\LC \norm{\wt \varphi}_{\wt H^s(\Omega)} + \norm{\wt \psi}_{L^2(\Omega)}  \RC,
    \end{align*}
    where we have utilized \eqref{orthogonal H^s}, \eqref{orthonormal L^2} and $\norm{(-\Delta)^{s/2}\bm{v}_m(0)}_{L^2(\R^n)} \leq C\norm{\wt \varphi}_{\wt H^s(\Omega)}$, for some constant $C>0$.
    Thus, combining \eqref{eta(t)}, \eqref{zeta(t)}, and \eqref{G-condition 8}, we derive the following bound 
    \begin{align*}
    	&\norm{\bm{v}_m'(t)}_{L^2(\Omega)}^2 + \norm{(-\Delta)^{s/2}\bm{v}_m}_{L^2(\R^n)}^2 \\ 
    	& \qquad \leq  C\LC \norm{\wt \varphi}_{\wt H^s(\Omega)}^2 + \norm{\wt\psi}_{L^2(\Omega)}^2 +\norm{\wt F}_{L^2(0,T;L^2(\Omega))}^2 \RC.
    \end{align*}
    Since the above estimate is independent of $t\in [0,T]$, one can conclude that 
    \begin{align*}
    	&\max_{0\leq t \leq T}\left(\norm{\bm{v}_m'(t)}_{L^2(\Omega)}^2 + \norm{(-\Delta)^{s/2}\bm{v}_m(t)}_{L^2(\R^n)}^2\right) \\ 
    	& \qquad \leq  C\LC \norm{\wt \varphi}_{\wt H^s(\Omega)}^2 + \norm{\wt\psi}_{L^2(\Omega)}^2 +\norm{\wt F}_{L^2(0,T;L^2(\Omega))}^2 \RC.
    \end{align*}

\vspace{3mm}

\noindent{\it Step 3. Conclusion.}

\vspace{3mm}

For any $\phi \in \wt H^s(\Omega)$ with $\norm{\phi}_{\wt H^s(\Omega)}\leq 1$, we write $\phi=\phi_1+\phi_2$, where $\phi_1 \in \mathrm{span}\{w_k\}_{k=1}^m$ and $(\phi_2,w_k)_{L^2(\Omega)}=0$, for $k=1,\ldots, m$. It is not hard to see $\norm{\phi_1}_{\wt H^s(\Omega)}\leq 1$. In view of \eqref{G-approximate solution} and \eqref{G-condition 2}, we have 
\begin{align*}
	(\bm{v}_m'' , \phi)_{L^2(\Omega)}=(\bm{v}_m '',\phi_1)_{L^2(\Omega)}=(\bm{\wt  F}, \phi_1)- B[\bm{v}_m,\phi_1;t],
\end{align*}
so that 
\begin{align*}
	\left|(\bm{v}_m'', \phi)_{L^2(\Omega)} \right| \leq C \LC \norm{\bm{\wt F}}_{L^2(\Omega)} +\norm{\bm{v}_m}_{\wt H^s(\Omega)}\RC,
\end{align*}
where we used $\norm{\phi_1}_{\wt H^s(\Omega)}\leq 1$. In conclusion, 
\begin{align*}
	\int_0^T \norm{\bm{v}_m ''}_{H^{-s}(\Omega)}^2 \, dt \leq & C \int_0 ^T\LC \norm{\bm{\wt F}}_{L^2(\Omega)}^2 +\norm{\bm{v}_m}_{\wt H^s(\Omega)}^2 \RC dt \\
	\leq & C \LC \norm{\wt \varphi}_{\wt H^s(\Omega)}^2 + \norm{\wt \psi}_{L^2(\Omega)}^2 +\norm{{\wt F}}_{L^2(0,T;L^2(\Omega))}^2\RC.
\end{align*}
This proves the assertion.
\end{proof}

Now, we are ready to prove Theorem \ref{thm:well-posedness}.

\begin{proof}[Proof of Theorem~{\rm \ref{thm:well-posedness}}]
Our goal is to pass the limits in the previous Galerkin approximations.

\vspace{3mm}

\noindent{\it Step 1. Existence of weak solution.}

\vspace{3mm}

Using the energy estimate \eqref{energy estimate of v_m}, it is known that the sequence $\{\bm{v}_m\}_{m\in \N}$, $\{ \bm{v}_m'\}_{m\in \N}$ and $\{\bm{v}_m''\}_{m\in \N}$  are bounded in $L^2(0,T;\wt H^s(\Omega))$, $L^2(0,T;L^2(\Omega))$ and $L^2(0,T;H^{-s}(\Omega))$, respectively.

By extracting a subsequence of $\{\bm{v}_m\}_{m\in \N}$ (still denote the subsequence as $\{\bm{v}_m\}_{m\in \N}$), there exists $\bm{v}\in L^2(0,T;\wt H^s(\Omega))$, with $\bm{v}'\in L^2(0,T;L^2(\Omega))$ and $\bm{v}''\in L^2(0,T;H^{-s}(\Omega))$ such that 
\begin{align*}
	\begin{cases}
		\bm{v}_m \rightharpoonup \bm{v} & \text{ weakly in }L^2(0,T;\wt H^s(\Omega)),\\
		\bm{v}_m' \rightharpoonup  \bm{v}'  & \text{ weakly in }L^2(0,T;L^2(\Omega)),\\
		\bm{v}_m'' \rightharpoonup \bm{v}''  & \text{ weakly in }L^2(0,T;H^{-s}(\Omega)).
	\end{cases}
\end{align*}
Given a fixed integer $N$, choose a function $\bm{\wt v}\in C^1(0,T;\wt H^s(\Omega))$ of the form
\begin{align}\label{function tilde v}
	\bm{\wt v}(t):=\sum_{k=1}^N d^k(t)w_k,
\end{align}
where $\{d^k\}_{k=1}^N$ are smooth functions and $\{w_k\}_{k\in \N}$ are the eigenfunctions given by \eqref{orthogonal H^s} and \eqref{orthonormal L^2}. By choosing $m\geq N$, multiplying \eqref{G-condition 2} by $d^k(t)$, and summing $k=1,\ldots,N$, we then integrate the resulting identity with respect to $t$ to derive 
\begin{align}\label{subseq equation}
	\int_0^T \LC (\bm{v}_m'',\bm{\wt v} )_{L^2(\Omega)} +B[\bm{v}_m,\bm{\wt v};t] \RC dt =\int_0^T (\bm{\wt F}, \bm{\wt v})_{L^2(\Omega)}\, dt.
\end{align}
By passing the limit (along a subsequence, if necessary) in \eqref{subseq equation}, we have 
\begin{align}\label{subseq limit equation}
	\int_0^T \LC (\bm{v}'',\bm{\wt v} )_{L^2(\Omega)} +B[\bm{v},\bm{\wt v};t] \RC dt =\int_0^T (\bm{\wt F}, \bm{\wt v})_{L^2(\Omega)}\, dt.
\end{align}
Note that \eqref{subseq limit equation} holds for all functions $\bm{\wt v}$ of the form \eqref{function tilde v}, which are dense in $L^2(0,T;\wt H^s(\Omega))$. Combining \eqref{subseq limit equation} and the denseness of $\bm{\wt v}$, we obtain 
\begin{align*}
	(\bm{v}'',\phi )_{L^2(\Omega)} +B[\bm{v},\phi ;t] =(\bm{\wt F},\phi )_{L^2(\Omega)},
\end{align*}
for any $\phi \in \wt H^s(\Omega)$ and for a.e. $0\leq t\leq T$. Moreover, by \cite[Theorem~5.9.2]{Evan}, one can show that $\bm{v}\in C([0,T];L^2(\Omega))$ and $\bm{v}'\in C([0,T];H^{-s}(\Omega))$. 

It remains to show that 
\begin{align}\label{check initials}
	\bm{v}(0)=\varphi \quad \text{and}\quad \bm{v}'(0)=\psi.
\end{align}
In order to show \eqref{check initials}, let us select any function $\bm{w}\in C^2([0,T];\wt H^s(\Omega))$, with $\bm{w}(T)=\bm{w}'(T)=0$. Integrating by parts twice with respect to $t$ of \eqref{subseq limit equation} yields that 
\begin{align}\label{uniqueness of initial 1}
	\begin{split}
		&\int_0^T  \LC (\bm{w}'',\bm{v})_{L^2(\Omega)}+B [\bm{v},\bm{w};t] \RC dt\\ =& \int_0^T (\bm{\wt F}, \bm{w})_{L^2(\Omega)}\, dt
		-(\bm{v}(0),\bm{w}'(0))_{L^2(\Omega)}+(\bm{v}'(0),\bm{w}(0))_{L^2(\Omega)}.
	\end{split}
\end{align}
Similarly, from \eqref{subseq equation}, one also has 
\begin{align}\label{uniqueness of initial 2}
	\begin{split}
		&\int_0^T  \LC (\bm{w}'',\bm{v}_m)_{L^2(\Omega)}+B [\bm{v}_m,\bm{w};t] \RC dt\\ =& \int_0^T (\bm{\wt F}, \bm{w})_{L^2(\Omega)}\, dt
		-(\bm{v}_m(0),\bm{w}'(0))_{L^2(\Omega)}+(\bm{v}_m'(0),\bm{w}(0))_{L^2(\Omega)}.
	\end{split}
\end{align}
Hence, by taking $m\to \infty$ of \eqref{uniqueness of initial 2} (along a subsequences as before), we have  
\begin{align}\label{uniqueness of initial 3}
	\begin{split}
		&\int_0^T  \LC (\bm{w}'',\bm{v})_{L^2(\Omega)}+B [\bm{v},\bm{w};t] \RC dt\\ =& \int_0^T (\bm{\wt F}, \bm{w})_{L^2(\Omega)}\, dt
		-(\wt \varphi,\bm{w}'(0))_{L^2(\Omega)}+(\wt \psi,\bm{w}(0))_{L^2(\Omega)}.
	\end{split}
\end{align}
Finally, comparing \eqref{uniqueness of initial 1} and \eqref{uniqueness of initial 3}, by the arbitrariness of $\bm{w}(0)$, $\bm{w}'(0)$, we can conclude that $\bm{v}$ is a weak solution of \eqref{fractional wave zero exterior}. 

\vspace{3mm}

\noindent{\it Step 2. Uniqueness of weak solution.}

\vspace{3mm}

Let $u_{1},u_{2}$ be weak solutions of \eqref{fractional wave well-posedness}. Then $u=u_{1}-u_{2}$ satisfies 
\[
\begin{cases}
	\LC \partial_{t}^{2}+(-\Delta)^{s}+q\RC u=0 & \text{ in }\;\Omega_{T},\\
	u=0 & \text{ in }\;(\Omega_{e})_{T},\\
	u=\partial_{t}u=0 & \text{ in }\;\mathbb{R}^{n}\times\{0\}.
\end{cases}
\]
For each fix $0\le r\le T$, we define 
\[
\bm{w}(t):=\begin{cases}
	\int_{t}^{r}\bm{u}(\tau)\,d\tau & \text{ if }0\le t\le r,\\
	0 & \text{ otherwise.}
\end{cases}
\]
Note that $\bm{w}(t)\in\wt{H}^{s}(\Omega)$ for each $0\le t\le T$. Therefore, choosing the test function $\phi=\bm{w}(t)$ in Definition~\ref{def:weak-soln} yields 
\[
(\bm{u}''(t),\bm{w}(t))_{L^{2}(\Omega)}+B[\bm{u},\bm{w};t]=0\quad\text{for a.e. }0\le t\le T.
\]
Since $\bm{u}'(0)=\bm{w}(r)=0$ and $\bm{w}'(t)=-\bm{u}(t)$, we have 
\begin{align*}
	0 & =\int_{0}^{r}(\bm{u}''(t),\bm{w}(t))_{L^{2}(\Omega)}\,dt+\int_{0}^{r}B[\bm{u},\bm{w};t]\,dt\\
	& =-\int_{0}^{r}(\bm{u}'(t),\bm{w}'(t))_{L^{2}(\Omega)}\,dt+\int_{0}^{r}B[\bm{u},\bm{w};t]\,dt\\
	& =\int_{0}^{r}(\bm{u}'(t),\bm{u}(t))_{L^{2}(\Omega)}\,dt-\int_{0}^{r}B[\bm{w}',\bm{w};t]\,dt\\
	& =\frac{1}{2}\int_{0}^{r}\frac{d}{dt}(\bm{u}(t),\bm{u}(t))_{L^{2}(\Omega)}\,dt-\frac{1}{2}\int_{0}^{r}\frac{d}{dt}B[\bm{w},\bm{w};t]\,dt\\
	& =\frac{1}{2}\|\bm{u}(r)\|_{L^{2}(\Omega)}^{2}-\frac{1}{2}\|\bm{u}(0)\|_{L^{2}(\Omega)}^{2}-\frac{1}{2}B[\bm{w},\bm{w};r]+\frac{1}{2}B[\bm{w},\bm{w};0]\\
	& =\frac{1}{2}\|\bm{u}(r)\|_{L^{2}(\Omega)}^{2}+\frac{1}{2}B[\bm{w},\bm{w};0]\\
	& =\frac{1}{2}\|\bm{u}(r)\|_{L^{2}(\Omega)}^{2}+\frac{1}{2}\|(-\Delta)^{s/2}\bm{w}(0)\|_{L^{2}(\mathbb{R}^{n})}^{2}+\frac{1}{2}\int_{\Omega}q|\bm{w}(0)|^{2}\,dx.
\end{align*}
That is, we obtain 
\begin{equation}
	\|\bm{u}(r)\|_{L^{2}(\Omega)}^{2}+\|(-\Delta)^{s/2}\bm{w}(0)\|_{L^{2}(\mathbb{R}^{n})}^{2}=-\int_{\Omega}q|\bm{w}(0)|^{2}\,dx\le\|q\|_{L^{\infty}}\int_{\Omega}|\bm{w}(0)|^{2}\,dx\label{eq:uniqueness1}
\end{equation}
Since 
\[
\int_{\Omega}|\bm{w}(0)|^{2}\,dx=\int_{\Omega}\left|\int_{0}^{r}\bm{u}(\tau)\,d\tau\right|^{2}\,dx\le\int_{0}^{r}\int_{\Omega}|\bm{u}(t)|^{2}\,dx\,dt=\int_{0}^{r}\|\bm{u}(t)\|_{L^{2}(\Omega)}^{2}\,dt,
\]
\eqref{eq:uniqueness1} implies 
\begin{equation}
	\|\bm{u}(r)\|_{L^{2}(\Omega)}^{2}\le\|q\|_{L^{\infty}}\int_{0}^{r}\|\bm{u}(t)\|_{L^{2}(\Omega)}^{2}\,dt.\label{eq:uniqueness2}
\end{equation}
Multiplying \eqref{eq:uniqueness2} by the integrating factor $e^{-r\|q\|_{L^{\infty}}}$ yields 
\begin{align*}
	& \frac{d}{dr}\left[e^{-r\|q\|_{L^{\infty}}}\int_{0}^{r}\|\bm{u}(t)\|_{L^{2}(\Omega)}^{2}\,dt\right]\\
	 =&e^{-r\|q\|_{L^{\infty}(\Omega)}}\left[-\|q\|_{L^{\infty}}\int_{0}^{r}\|\bm{u}(t)\|_{L^{2}(\Omega)}^{2}\,dt+\|\bm{u}(r)\|_{L^{2}(\Omega)}^{2}\right]\le0,
\end{align*}
that is, 
\[
e^{-r\|q\|_{L^{\infty}(\Omega)}}\int_{0}^{r}\|\bm{u}(t)\|_{L^{2}(\Omega)}^{2}\,dt=0\quad\text{for all }r\in(0,T),
\]
and this immediately implies $\bm{u}\equiv0$. 

\vspace{3mm}

\noindent{\it Step 3. Energy estimate.}

\vspace{3mm}

In Step 1 (precisely, \eqref{energy estimate of v_m}), we have derived 
\begin{align*}
	&\max_{0\le t\le T}\LC \|\bm{v}_{m}(t)\|_{\wt{H}^{s}(\Omega)}+\|\bm{v}_{m}'(t)\|_{L^{2}(\Omega)}\RC\\
	\le& C\LC \|\wt{\bm{F}}\|_{L^{2}(0,T;L^{2}(\Omega))}+\|\wt{\varphi}\|_{\wt{H}^{s}(\Omega)}+\|\psi\|_{L^{2}(\Omega)}\RC. 
\end{align*}
Passing the limit $m\rightarrow\infty$, the estimate \eqref{eq:energy-est0} follows directly from above.
\end{proof}

\vskip0.5cm

\noindent\textbf{Acknowledgments.} 
The second author is partially  supported by the Ministry of Science and Technology Taiwan, under the Columbus Program: MOST-109-2636-M-009-006, 2020-2025. The third author is partly supported by MOST 108-2115-M-002-002-MY3 and 109-2115-M-002-001-MY3.

\bibliographystyle{alpha}
\bibliography{ref}

\end{document}